\numberwithin{equation}{section}
\def\cocoa{{\hbox{\rm C\kern-.13em o\kern-.07em C\kern-.13em o\kern-.15em A}}}
\newtheorem{theorem}{Theorem}[section]
\newtheorem{lemma}[theorem]{Lemma}
\newtheorem{proposition}[theorem]{Proposition}
\theoremstyle{definition}
\newtheorem{remark}[theorem]{Remark}
\newtheorem{definition}[theorem]{Definition}
\newtheorem{example}[theorem]{Example}
\newtheorem{construction}[theorem]{Construction}
\newcommand {\sHom}{\mathcal{H}\kern -0.25ex{\mathit om}}
\newcommand {\sExt}{\mathcal{E}\kern -0.25ex{\mathit xt}}
\newcommand {\sTor}{\mathcal{T}\kern -0.25ex{\mathit or}}
\newcommand {\im}{\mathrm{im}}
\newcommand {\rk}{\mathrm{rk}}
\newcommand {\Ext}{\mathrm{Ext}}
\newcommand {\Hom}{\mathrm{Hom}}
\newcommand {\Hilb}{\mathcal{H}\kern -0.25ex{\mathit ilb\/}}
\newcommand {\quantum}{k}
\newcommand {\cK}{\mathcal{K}}
\newcommand {\cA}{\mathcal{A}}
\newcommand {\cB}{\mathcal{B}}
\newcommand{\cC}{{\mathcal C}}
\newcommand{\cS}{{\mathcal S}}
\newcommand{\cE}{{\mathcal E}}
\newcommand{\cF}{{\mathcal F}}
\newcommand{\cM}{{\mathcal M}}
\newcommand{\cN}{{\mathcal N}}
\newcommand{\cO}{{\mathcal O}}
\newcommand{\cG}{{\mathcal G}}
\newcommand{\cI}{{\mathcal I}}
\newcommand {\bZ}{\mathbb{Z}}
\newcommand {\bC}{\mathbb{C}}
\newcommand {\bP}{\mathbb{P}}
\newcommand {\bF}{\mathbb{F}}
\newcommand{\Pic}{\operatorname{Pic}}
\def\p#1{{\bP^{#1}}}
\def\mapright#1{\mathbin{\smash{\mathop{\longrightarrow}
\limits^{#1}}}}
\title[Even and odd instanton bundles]{Even and odd instanton bundles\\ 
on Fano threefolds}
\thanks{The first and second authors are a member of GNSAGA group of INdAM and are supported by the framework of the MIUR grant Dipartimenti di Eccellenza 2018-2022 (E11G18000350001). The third author is supported by the grant MAESTRO NCN - UMO-2019/34/A/ST1/00263 - Research in Commutative Algebra and Representation Theory.}
\subjclass[2020]{Primary: 14J60. Secondary: 14D21, 14F08, 14J30, 14J45.}
\keywords{Fano threefold, vector bundle, $\mu$--(semi)stable bundle, simple bundle, instanton bundle}
\author[V. Antonelli, G. Casnati, O. Genc]{Vincenzo Antonelli, Gianfranco Casnati, Ozhan Genc}
\begin{document}

\begin{abstract}
We define non--ordinary instanton bundles on Fano threefolds $X$ extending the notion of (ordinary) instanton bundles introduced in \cite{C--C--G--M}. We determine a lower bound for the quantum number of a non--ordinary instanton bundle, i.e. the degree of its second Chern class, showing the existence of such bundles for each admissible value of the quantum number when $i_X\ge 2$ or $i_X=1$, $\rk \Pic(X)=1$ and $X$ is ordinary. In these cases we deal with the component inside the moduli spaces of simple bundles containing the vector bundles we construct and we study their restriction to lines. Finally we give a monadic description of non--ordinary instanton bundles on $\p3$ and the smooth quadric studying their loci of jumping lines, when of the expected codimension.
\end{abstract}
\maketitle
\section{Introduction}
Let $X$ be a threefold over the complex field $\bC$. We say that $X$ is a {\sl Fano threefold} if its anticanonical line bundle $\omega_X^{-1}$ is ample. The greatest integer $i_X$ such that $\omega_X\cong\cO_X(-i_Xh)$ for some ample $\cO_X(h) \in \Pic(X)$ is called {\sl the index} of $X$: it is well--known that such an $\cO_X(h)$ is uniquely determined and it is called the {\sl fundamental line bundle} of $X$. 

The very first examples of Fano threefold are $\p3$ and the smooth quadric $Q\subseteq\p4$. In this cases $i_{\p3}=4$ and $i_Q=3$ respectively and the fundamental line bundle is the one cut out by the hyperplanes. Thus the origin of the study of vector bundles supported on Fano threefolds may be traced back to the seminal papers \cite{Ba,Ha4,El--St}, inspecting the case of the projective $3$--space, and to \cite{S--W,O--S} for $Q$. 

In the aforementioned papers, the authors focus their attention on $\mu$--stable bundles. Recall that the number $\mu(\cF)=c_1(\cF)h^{2}/\rk(\cF)$ is defined for each torsion--free sheaf $\cF$. The torsion--free sheaf $\cF$ is called {\sl $\mu$--semistable} (resp. {\sl $\mu$--stable}) if for all proper subsheaves $\cG$ with $0<\rk(\cG)<\rk(\cF)$ we have $\mu(\cG) \le \mu(\cF)$ (resp. $\mu(\cG)<\mu(\cF)$).

Among the $\mu$--stable rank two vector bundles on $\p3$, a relevant role is played by {\sl instanton bundles}, i.e. $\mu$--stable rank two vector bundles $\cE$ such that $c_1(\cE)=0$ and $h^1\big(\cE(-2)\big)=0$. Instanton bundles on $\p3$ carry many interesting properties and they have been thoroughly studied in the past and recent years.

Starting from $\p3$, the notion of instanton bundle has been widely generalized, e.g. to projective space and smooth quadrics of any dimension. A first generalization to the  Fano threefolds with Picard number $\varrho_X:=\rk \Pic (X)=1$ can be found in \cite{Fa} (see also \cite{Kuz}). More precisely, an instanton bundle is defined in \cite{Fa} as a $\mu$--stable rank $2$ bundle with $c_1(\cE)=(2q_X-i_X)h$ and $h^1\big(\cE(-q_X h)\big)=0$ where $q_X=\left[ \frac{i_X}{2}\right]$. The existence of instanton bundles according to this definition is proved for $i_X=2,3$ in \cite{Fa} and for $i_X=1$ in \cite{B--F2}.

In \cite{M--M--PL}, the authors extended the notion of instanton bundle to the flag manifold $F(0,1,2)$ which is a Fano threefold with $\varrho_X=2$. More recently, the definition has been further generalized to each Fano threefold, regardless of $i_X$ or $\varrho_X$: see \cite[Definition 1.2]{C--C--G--M}. Moreover, the existence of instanton bundles according to the latter definition has been settled  on some Fano threefolds: see \cite{C--C--G--M, Cs--Ge, A--M, A--C--G}. 

In any case, each instanton bundle $\cE$ on $X$ satisfies $c_1(\cE)\in\{0,-h\}$ according to the parity of $i_X$. E.g. $c_1(\cE)=0$ when $X\cong\p3$. Nonetheless, rank $2$ vector bundles $\cE$ with $c_1(\cE)=-1$ on $\p3$ are certainly interesting and they have been classical object of study (e.g. see \cite{H--S,B--M,El--Gr}) for low values of $c_2(\cE)$. 

Motivated by the above considerations, in this paper we give the following definition, where for $\varepsilon\in \left\{0,1\right\}$ we set 
$$
q_X^\varepsilon=\left[ \frac{i_X+1-\varepsilon}{2}\right]
$$
and we denote the integral cohomology of $X$ by $H^i(X)$.

\begin{definition}\label{dTwistedInstanton}
Let $X$ be a Fano threefold.

A vector bundle $\cE$ of rank 2 on $X$ is called an instanton bundle if the following properties hold:
\begin{itemize}
\item $c_1(\cE)=-\varepsilon h$, where $\varepsilon\in \left\{0,1\right\}$;
\item $\cE$ is $\mu$--semistable with respect to $\cO_X(h)$ and $(1-\varepsilon)h^0\big(\cE\big)=0$;
\item $h^1\big(\cE(-q_X^\varepsilon h)\big)=0$.
\end{itemize}
The class $c_2(\cE)\in H^4(X)$ and its degree are respectively called (topological) charge and quantum number of $\cE$.
\end{definition}

We spend now a few words in order to better explain Definition \ref{dTwistedInstanton} above. When $\varepsilon=1$ we do not require the vanishing $h^0\big(\cE\big)=0$ in the above definition. On the one hand, in spite of this apparent asymmetry, the vanishing of $h^0\big(\cE\big)$ when $\varepsilon=1$ is actually a direct consequence of the $\mu$--semistability of $\cE$, because $\mu(\cE)<0$. On the other hand, when $\varepsilon =0$ the vector bundle $\cE:=\cO_X^{\oplus2}$ certainly satisfies all the properties of Definition \ref{dTwistedInstanton} but $h^0\big(\cE\big)=0$. Moreover, when $i_X\le2$ it is not difficult to give further examples of non--trivial rank $2$ bundles with an analogous behaviour and with arbitrarily large quantum number: e.g. every rank $2$ bundle associated via the Serre correspondence to a disjoint set of integral curves of degree $3-i_X$ in $X$.

In order to have a rough idea of the meaning of the {\sl instantonic condition}, i.e. the vanishing condition for the degree $1$ cohomology in Definition \ref{dTwistedInstanton} above, we will show in Lemma \ref{lH12} that $h^1\big(\cE(th)\big)=0$ for $t\le-q_X^\varepsilon$ for each $\mu$--semistable rank $2$ vector bundle $\cE$ on $X$ with $c_1(\cE)=-\varepsilon h$, $\varepsilon\in \left\{0,1\right\}$. If $X\cong\p3$, thanks to \cite[Theorem 7]{Mad} the vanishing
$$
h^1\big(\cE((1-q_{\p3}^\varepsilon)h)\big)=h^1\big(\cE(-h)\big)=0,
$$
implies that $\cE$ is a direct sum of line bundles: the same is true for $X\cong Q$  and $\varepsilon=0$.  In particular, in these cases, $-q_X^\varepsilon=-2$ is the greatest integer such that there could exist an indecomposable $\cE$ as above with $h^1\big(\cE(th)\big)=0$ for $t\le-q_X^\varepsilon$.

\medbreak
In Section \ref{sGeneral} we list some helpful facts about vector bundles and Fano threefolds that we will use throughout the whole paper. 

\medbreak
When $\varrho_X=1$, each instanton bundle $\cE$ is $\mu$--stable, hence {\sl simple}, i.e.
$$
\Hom_X\big(\cE,\cE\big)\cong H^0\big(\cE\otimes\cE^\vee\big)\cong\bC.
$$
When $\varrho_X\ge2$ this is no longer true. Nevertheless in Section \ref{sSimple} we prove the following result characterizing the group of endomorphisms of an instanton bundle.

\begin{theorem}
\label{tSimple}
Let $X$ be a Fano threefold.

If $\cE$ is an instanton bundle on $X$, the following assertions hold.
\begin{enumerate}
\item If $\cE$ is  indecomposable, then it is simple.
\item If $\cE$ is decomposable, then $\cE\cong\cO_{X}(D)\oplus\cO_{X}(-D-\varepsilon h)$,
where $Dh^2=-\varepsilon h^3/2$ and
\begin{equation}
\label{VanishingDecomposable}
\begin{gathered}
 h^0\big(\cO_{X}(D)\big)=h^0\big(\cO_{X}(-D-\varepsilon h)\big)=0,\\
 h^1\big(\cO_{X}(D-q_X^\varepsilon h)\big)=h^1\big(\cO_{X}(-D-(\varepsilon+q_X^\varepsilon)h)\big)=0.
\end{gathered}
\end{equation}
In this case, $\Hom_X\big(\cE,\cE\big)\cong\bC^{\oplus2}$ acting diagonally.
\end{enumerate}
\end{theorem}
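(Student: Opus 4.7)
The plan is to treat the decomposable case (2) first by direct decomposition, then deduce (1) by showing that any non--scalar endomorphism forces such a decomposition, contradicting indecomposability.

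For (2), write $\cE\cong\cO_X(D_1)\oplus\cO_X(D_2)$ (direct summands of a locally free sheaf are locally free). The constraint $c_1(\cE)=-\varepsilon h$ forces $D_2=-D_1-\varepsilon h$, and $\mu$--semistability applied to both summands forces $D_ih^2=-\varepsilon h^3/2$; set $D:=D_1$. The vanishings in \eqref{VanishingDecomposable} follow by splitting the cohomology of $\cE$ and of $\cE(-q_X^\varepsilon h)$ into the two summands; when $\varepsilon=1$ the vanishing $h^0\big(\cO_X(D)\big)=0$ is automatic, since $Dh^2<0$ and ampleness of $h$ forbids effective divisors of non--positive $h^2$--degree. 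Finally $\Hom_X(\cE,\cE)$ decomposes into four blocks, the off--diagonal ones being $H^0\big(\cO_X(\pm(2D+\varepsilon h))\big)$; as $(2D+\varepsilon h)h^2=0$, the same positivity forces these to vanish unless $2D+\varepsilon h=0$. The degenerate case where the two summands coincide is excluded either by $h^0(\cE)=0$ (when $\varepsilon=0$: otherwise $\cE\cong\cO_X^{\oplus 2}$) or by the fundamental--line--bundle property (when $\varepsilon=1$: otherwise $\cO_X(h)$ would be the square of the ample line bundle $\cO_X(-D)$, contradicting the maximality of $i_X$).

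For (1), assume $\cE$ is indecomposable and suppose some $\psi\in\Hom_X(\cE,\cE)$ is non--scalar. The scalars $\mathrm{tr}(\psi),\det(\psi)\in H^0(\cO_X)=\bC$ yield the Cayley--Hamilton identity $\psi^2-\mathrm{tr}(\psi)\psi+\det(\psi)\,\mathrm{id}=0$. A non--zero discriminant produces a pair of complementary idempotents $(\psi-\lambda_j)/(\lambda_i-\lambda_j)$ that split $\cE$ into two rank--one direct summands, contradicting indecomposability. Hence the polynomial has a double root $\lambda$ and $\phi:=\psi-\lambda\,\mathrm{id}\ne 0$ satisfies $\phi^2=0$. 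Set $\cK:=\ker\phi$ and $\cI:=\im\phi$; both have rank $1$ and $\cI\subseteq\cK$. Because $\cI$ sits inside $\cE$, $\mu$--semistability of $\cE$ applied to $\cI$ both as a subsheaf and as a quotient forces $\mu(\cK)=\mu(\cI)=\mu(\cE)$. Since $\cE/\cK\cong\cI$ is torsion--free, $\cK$ is saturated in $\cE$, hence on the smooth threefold $X$ it is a line bundle $\cO_X(D)$ with $Dh^2=-\varepsilon h^3/2$, and $\cE/\cK\cong\cO_X(-D-\varepsilon h)\otimes\cI_Z$ for some subscheme $Z\subseteq X$ of codimension $\ge 2$.

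Because $\phi$ vanishes on $\cK$, it factors through a morphism $\eta\colon\cO_X(-D-\varepsilon h)\otimes\cI_Z\to\cO_X(D)$, which is non--zero since $\im\phi=\cI\ne 0$. The local sheaf--$\sExt$ vanishing for $Z$ of codimension $\ge 2$ gives $\sHom\big(\cI_Z,\cO_X(2D+\varepsilon h)\big)\cong\cO_X(2D+\varepsilon h)$, so $\eta$ yields a non--zero section of $\cO_X(2D+\varepsilon h)$. But $(2D+\varepsilon h)h^2=0$, so by the positivity argument from (2) the class is either zero or non--effective, and the former is excluded by the same dichotomy between $h^0(\cE)=0$ (for $\varepsilon=0$) and the fundamental--line--bundle property (for $\varepsilon=1$). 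This contradicts $\phi\ne 0$, so $\cE$ is simple. The main technical step is producing $\eta$ with its stated source and target: one must use the torsion--freeness of $\im\phi$ to know that $\cK$ is automatically saturated, and then apply the local--$\sExt$ identification to transfer the question to $H^0$ of a single line bundle, at which point the ampleness and fundamental--line--bundle inputs from (2) close the argument in both the generic and the degenerate cases.
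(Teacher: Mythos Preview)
Your proof is correct, but the route for (1) differs from the paper's. The paper first disposes of the $\mu$--stable case (simplicity is automatic) and then, in the strictly $\mu$--semistable case, invokes its Lemma~\ref{lSimple} to produce the extension
\[
0\longrightarrow\cO_X(D)\longrightarrow\cE\longrightarrow\cI_{Z\vert X}(-D-\varepsilon h)\longrightarrow0
\]
with $Dh^2=-\varepsilon h^3/2$, tensors by $\cE^\vee\cong\cE(\varepsilon h)$, and bounds $h^0\big(\cE\otimes\cE^\vee\big)$ term by term; the key input is the same vanishing $h^0\big(\cO_X(\pm(2D+\varepsilon h))\big)=0$ that you use. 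Your Cayley--Hamilton argument is more uniform: it does not split into stable versus strictly semistable, and it manufactures the relevant line bundle $\cO_X(D)=\ker\phi$ directly from the hypothetical non--scalar endomorphism rather than from an abstract destabilizing subsheaf. What the paper's approach buys is a slightly more explicit bookkeeping of $\dim\Hom_X(\cE,\cE)$ that also covers the decomposable case in the same breath (the final sentence of its proof of Theorem~\ref{tSimple} reads off $\Hom_X(\cE,\cE)\cong\bC^{\oplus2}$ from Equalities~\eqref{VanPos} and~\eqref{VanNeg}); what your approach buys is a cleaner structural reason for simplicity, valid for any rank~$2$ bundle whose endomorphism algebra sits inside $H^0(\cO_X)=\bC$ in trace and determinant. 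One cosmetic point: when $\varepsilon=1$ and $2D+\varepsilon h=0$, you do not actually need $\cO_X(-D)$ to be ample---the contradiction is simply that $\cO_X(h)$ is $2$--divisible, violating the definition of the fundamental line bundle (equivalently, the maximality of $i_X$); the paper phrases it exactly this way.
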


\medbreak
When $i_X-\varepsilon$ is even, instanton bundles in the sense of Definition \ref{dTwistedInstanton} coincide with instanton bundles as defined in \cite{Fa} and \cite{C--C--G--M}, because $q_X=q_X^\varepsilon$. For this reason, we give the following definition.

\begin{definition}
Let $X$ be a Fano threefold.

An instanton bundle $\cE$ on $X$ with $c_1(\cE)=-\varepsilon h$ is called even or odd (resp. ordinary, non--ordinary), if $\varepsilon$ (resp. $i_X-\varepsilon$) is even or odd respectively.
\end{definition}

As we already pointed out above, ordinary instanton bundles have been widely studied. The present paper is essentially focused on the non--ordinary instanton bundle, studying their elementary properties, constructing examples and studying the corresponding points in the moduli spaces where they sit.

\medbreak
More precisely, in Section \ref{sSharp} we deal with non--ordinary instanton bundles and we prove the following lower bound for the quantum numbers of such bundles (see \cite[Lemma 4.2]{C--C--G--M} for a similar bound in the ordinary case).

\begin{theorem}
\label{tSharp}
Let $X$ be a Fano threefold.

If $\cE$ is an indecomposable non--ordinary instanton bundle on $X$, then its quantum number is even and satisfies
\begin{equation}
\label{BoundSharp}
\quantum\ge\left\lbrace\begin{array}{ll} 
2\quad&\text{if either $i_X\ge3$ or $i_X=2$ and $h^3\le5$,}\\
4\quad&\text{if either $i_X=2$ and $h^3\ge6$ or $i_X=1$.}
\end{array}\right.
\end{equation}
\end{theorem}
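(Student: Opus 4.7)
The plan is to compute $\chi\bigl(\cE(-q_X^\varepsilon h)\bigr)$ via Hirzebruch--Riemann--Roch and then force it to be nonnegative through cohomology vanishings. Since $\cE$ is non-ordinary, $i_X-\varepsilon$ is odd and therefore $q_X^\varepsilon=(i_X+1-\varepsilon)/2$. Substituting $c_1(\cE)=-\varepsilon h$ and $c_2(\cE)\cdot h=\quantum$ in the rank--$2$ expansion of the Chern character, and using the identity $c_1(X)\cdot c_2(X)=24$ coming from $\chi(\cO_X)=1$, a direct calculation collapses (after cancellations in the $h^3$--coefficient) to
\[
\chi\bigl(\cE(-q_X^\varepsilon h)\bigr)=\frac{\quantum-\kappa(i_X)}{2},\qquad\kappa(4)=\kappa(3)=0,\ \kappa(2)=2,\ \kappa(1)=4.
\]
Integrality of $\chi$ immediately forces $\quantum$ to be even, which already yields the parity claim.

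Next I would kill $h^0$, $h^1$ and $h^3$ of $\cE(-q_X^\varepsilon h)$. First, $\mu\bigl(\cE(-q_X^\varepsilon h)\bigr)=-(i_X+1)h^3/2<0$, so $\mu$--semistability gives $h^0=0$; second, $h^1=0$ is exactly the instantonic condition in Definition~\ref{dTwistedInstanton}; third, Serre duality together with the rank--$2$ identification $\cE^\vee\cong\cE(\varepsilon h)$ yields
\[
h^3\bigl(\cE(-q_X^\varepsilon h)\bigr)=h^0\bigl(\cE(\tfrac{\varepsilon-i_X+1}{2}h)\bigr),
\]
whose twist has strictly negative slope for $i_X\ge 2$, and reduces to $h^0(\cE)=0$ (built into Definition~\ref{dTwistedInstanton}) when $i_X=1$, $\varepsilon=0$. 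Therefore $0\le h^2\bigl(\cE(-q_X^\varepsilon h)\bigr)=(\quantum-\kappa(i_X))/2$, giving $\quantum\ge\kappa(i_X)$. This already closes the cases $i_X=1$ and $i_X=2$ with $h^3\le 5$.

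For $i_X\ge 3$ the basic bound reads only $\quantum\ge 0$, and I would use indecomposability to exclude $\quantum=0$. By the Kobayashi--Ochiai classification $\varrho_X=1$ and $H^4(X,\mathbb{Z})$ has rank one, so $\quantum=0$ forces $c_2(\cE)=0$. For $X=\p3$ ($i_X=4$, $\varepsilon=1$) the Bogomolov inequality applied to a general hyperplane restriction of $\cE$ immediately contradicts $\mu$--semistability; for $X=Q$ ($i_X=3$, $\varepsilon=0$) a Jordan--H\"older analysis combined with $H^1(\cO_Q)=0$ identifies $\cE$ with $\cO_Q^{\oplus 2}$, which is decomposable. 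Either way, indecomposability is contradicted.

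The hard part will be the case $i_X=2$ with $h^3\ge 6$, where the basic estimate gives only $\quantum\ge 2$ and I must rule out $\quantum=2$. In that case all cohomology of $\cE(-h)$ vanishes by the preceding argument. A further HRR calculation gives $\chi(\cE(h))=h^3$, while Lemma~\ref{lH12} combined with Serre duality forces $h^2(\cE(h))=h^3(\cE(h))=0$; hence $h^0(\cE(h))\ge h^3\ge 6$. The plan is then to exploit this abundance of sections by extracting a saturated rank--$1$ subsheaf $\cO_X(D)\hookrightarrow\cE(h)$ whose complement realises $\cE$ as an extension that must split under $\mu$--semistability; equivalently, via the Serre correspondence, one reduces $\quantum=2$ to the existence of a connected curve $Z\subset X$ of $h$--degree $2$ with $\omega_Z\cong\cO_Z(-3h)|_Z$, and rules this out from the Mori--Mukai classification of del Pezzo threefolds of degree $\ge 6$. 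This last case-by-case exclusion is the main obstacle of the proof.
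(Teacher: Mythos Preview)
Your overall strategy---Riemann--Roch plus cohomology vanishing at a suitable twist---is the same as the paper's, but you evaluate $\chi$ at the wrong twist and this costs you. The paper computes $\chi(\cE)$ rather than $\chi\bigl(\cE(-q_X^\varepsilon h)\bigr)$: after showing $h^0(\cE)=h^2(\cE)=h^3(\cE)=0$ (the key $h^2$ vanishing uses Lemma~\ref{lH12}), one gets
\[
-h^1(\cE)=\chi(\cE)=2-\varepsilon-\tfrac{i_X-\varepsilon}{2}\,\quantum\le 0,
\]
which immediately yields both evenness of $\quantum$ and the bound $\quantum\ge 2$ for every $i_X\ge 2$ (and $\quantum\ge 4$ for $i_X=1$), with no further case analysis needed when $i_X\ge 3$. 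Your twist gives only $\quantum\ge 0$ for $i_X\ge 3$, forcing you into an ad hoc exclusion of $\quantum=0$.

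That exclusion has a genuine gap in the quadric case. Your Jordan--H\"older argument only treats the strictly $\mu$--semistable situation (a saturated $\cO_Q\hookrightarrow\cE$ together with $H^1(\cO_Q)=0$ would split $\cE$); it says nothing when $\cE$ is $\mu$--stable, and the stable case is \emph{not} vacuous a priori---Bogomolov gives only $4\quantum\ge 0$ on $Q$, which is useless. To rule it out you are forced back to computing $\chi(\cE)$ anyway: with $c_1=c_2=0$ one finds $\chi(\cE)=2$, while the vanishings $h^0=h^2=h^3=0$ give $\chi(\cE)=-h^1(\cE)\le 0$, a contradiction. So the paper's twist choice is strictly cleaner. (Your $\p3$ argument via Bogomolov on a hyperplane restriction is fine, though even there one could simply note $4\quantum\ge h^3=1$ directly on $X$.)

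For the remaining hard case $i_X=2$, $h^3\ge 6$, your plan matches the paper's: both obtain $h^0(\cE(h))\ge h^3$ from Riemann--Roch and Lemma~\ref{lH12}, pick a non-zero section, and analyse its zero locus. The actual exclusion, however, is a genuinely explicit case-by-case computation on each of $F_{6,2}$, $F_{6,3}$, $F_7$ using their concrete conic geometry (Remarks~\ref{rFlag}--\ref{rBlow}); your phrase ``rules this out from the Mori--Mukai classification'' understates what is needed.
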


\medbreak
The above lower bound is actually sharp when $i_X\ge2$. Indeed, in Construction \ref{conInstanton} we use the Serre correspondence to associate certain bundles of rank $2$ to a suitable set of pairwise disjoint conics, which enables us to prove the following result in Section  \ref{sExistence} extending \cite[Example 3.1.2]{Ha4}.

\begin{theorem}
\label{tExistence}
Let $X$ be a Fano threefold with $i_X\ge2$ and very ample $\cO_X(h)$.

For each even integer $\quantum$ satisfying Inequality \eqref{BoundSharp}, Construction \ref{conInstanton} gives a $\mu$--stable, non--ordinary instanton bundle $\cE$ with quantum number $\quantum$. 
\end{theorem}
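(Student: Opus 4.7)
The plan is to unpack Construction \ref{conInstanton} and verify that it yields a sheaf with the required properties. That construction starts from a configuration of $n$ pairwise disjoint smooth conics $C=C_1\sqcup\cdots\sqcup C_n\subset X$ and applies the (generalized) Serre correspondence to produce a rank $2$ sheaf $\cE$ fitting in an extension
$$
0\to\cO_X(ah)\to\cE\to\cI_C(bh)\to 0,
$$
with $a+b=-\varepsilon$ and $b-a=i_X-1$; these numbers are integers precisely because $i_X-\varepsilon$ is odd in the non--ordinary case, and they come from imposing the adjunction formula on each smooth conic ($\omega_C$ of degree $-2$). I then need to verify: (a) existence of $n$ pairwise disjoint smooth conics for every admissible $n$, (b) that a suitable extension class produces a locally free $\cE$, (c) the numerics $c_1(\cE)=-\varepsilon h$ and $\quantum=c_2(\cE)\cdot h=ab\,h^3+2n$, so that every prescribed value of $\quantum$ is realized by an appropriate $n$, (d) the instantonic vanishing $h^1\big(\cE(-q_X^\varepsilon h)\big)=0$, and (e) the $\mu$--stability of $\cE$.

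For (a) the Hilbert scheme of conics on $X$ is positive--dimensional when $i_X\ge 2$ and $\cO_X(h)$ is very ample, so $n$ general conics are smooth and pairwise disjoint by a dimension count. For (b) the extension defines a locally free sheaf as soon as a standard Cayley--Bacharach condition holds, which reduces to the Kodaira vanishing $h^1\big(\cO_X(-(i_X-1)h)\big)=0$. Step (c) is a direct Chern character computation using $ch(\cI_C)=1-[C]+\cdots$. For (d) I would twist the defining extension by $\cO_X(-q_X^\varepsilon h)$ and read off $h^1\big(\cE(-q_X^\varepsilon h)\big)$ from the long exact sequence: the line bundle contribution $h^1\big(\cO_X((a-q_X^\varepsilon)h)\big)$ vanishes by Serre duality (the exponent equals $-i_X$ in all three non--ordinary cases with $i_X\ge 2$), while $h^1\big(\cI_C((b-q_X^\varepsilon)h)\big)$ vanishes because $b-q_X^\varepsilon=-1$ and $\cO_C(-h)$ has no global sections on each smooth conic.

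The main obstacle is (e). For $\varrho_X=1$ any destabilizing subsheaf has the form $\cO_X(th)$ with $t\ge 0$, and non--existence reduces via the extension to $h^0\big(\cO_X((a-t)h)\big)=0$ (automatic since $a<0\le t$) together with $h^0\big(\cI_C((b-t)h)\big)=0$; the latter forces $n$ to be large enough that a general union of $n$ conics is not contained in a divisor of class $(b-t)h$, and this is precisely where the lower bound of Theorem \ref{tSharp} becomes essential (for small $n$ a single conic may still be cut by a hyperplane). For $\varrho_X\ge 2$, which can happen only when $i_X=2$, the argument is more intricate: one has to examine every divisor class $D$ with $Dh^2\ge-\varepsilon h^3/2$, lift a putative map $\cO_X(D)\hookrightarrow\cE$ through the extension, and rule it out using both the negativity of the relevant twist and the specific geometry of conics on the Fano threefold at hand. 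Putting this case analysis together in a uniform way is what makes this step the most delicate.
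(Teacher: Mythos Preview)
Your proposal is correct and follows essentially the same route as the paper: verify the Chern classes and the instantonic vanishing directly from the defining extension of Construction \ref{conInstanton}, then prove $\mu$--stability via the Hoppe criterion (Lemma \ref{lHoppe}), reducing when $\varrho_X=1$ to showing $h^0\big(\cI_{Z\vert X}((q_X^\varepsilon-1)h)\big)=0$ (which for $\p3$ and $Q$ amounts to checking that two disjoint conics cannot lie in a hyperplane), and for $\varrho_X\ge2$ carrying out the explicit case analysis on $F_{6,2}$, $F_{6,3}$, $F_7$ using the concrete description of conics on each. Your identification of step (e) as the crux, and of the $\varrho_X\ge2$ case as requiring threefold--specific geometry, matches the paper exactly.
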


\medbreak
When $i_X=1$ the existence of even instanton bundles is slightly less immediate. Indeed Construction \ref{conInstanton} returns rank $2$ bundles which are certainly not instanton, because they have non--zero sections. In particular the approach via Serre correspondence is certainly not possible if $X$ is a {\sl prime Fano threefold}, i.e. a Fano threefold such that $\varrho_X=1$ as we show at the beginning of Section \ref{sPrime}. 

We also describe therein a completely different approach based on a suitable chain of deformations of the bundles obtained via Construction \ref{conInstanton}, showing that such an approach leads to instanton bundles and proving the following result extending \cite[Theorem 4.1]{B--F1} at least when $X$ is {\sl ordinary}, i.e. it contains a line with normal bundle $\cO_{\p1}\oplus\cO_{\p1}(-1)$.

\begin{theorem}
\label{tPrime}
Let $X$ be an ordinary prime Fano threefold  with very ample $\cO_X(h)$.

For each even integer $\quantum\ge4$, there exists a $\mu$--stable, even instanton bundle with quantum number $\quantum$. 
\end{theorem}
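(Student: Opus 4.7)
The plan is to start from a bundle $\cE_0$ supplied by Construction \ref{conInstanton}: via the Serre correspondence applied to a disjoint union $Y=C_1\cup\dots\cup C_m$ of $m=\quantum/2$ smooth conics in $X$ (chosen so that $\omega_Y\cong\cO_Y(-h)$, which holds for disjoint conics), one obtains a rank $2$ sheaf with $c_1(\cE_0)=0$, quantum number $\quantum$, and the vanishing $h^1\big(\cE_0(-h)\big)=0$. As pointed out in the introduction, the defining extension
\begin{equation*}
0\to\cO_X\to\cE_0\to\cI_{Y/X}\to 0
\end{equation*}
forces $h^0(\cE_0)\ge 1$, so $\cE_0$ is not yet an even instanton, since $\varepsilon=0$ demands $h^0(\cE_0)=0$.

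The core of the plan is to remove this section by exhibiting a flat family $\{\cE_t\}_{t\in T}$ of simple rank $2$ sheaves on $X$ with constant Chern classes, such that $\cE_0$ is a special member while a generic member $\cE$ is locally free and satisfies $h^0(\cE)=0$. The locus of such bundles carrying a section fibres over the Hilbert scheme of curves $Y'\subset X$ with $\omega_{Y'}\cong\cO_{Y'}(-h)$; the essential use of the ordinariness hypothesis is to supply a distinguished line $\ell\subset X$ with $N_{\ell/X}\cong\cO_{\p1}\oplus\cO_{\p1}(-1)$ and, through elementary transformations along $\ell$ (or along a flat family of deformations of $\ell$), a tool to move $[\cE_0]$ out of the section--having stratum while keeping $c_1$ and $c_2$ fixed. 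This is the ``chain of deformations'' anticipated in the statement. The instantonic vanishing $h^1\big(\cE_t(-h)\big)=0$ then propagates to nearby members by upper semicontinuity.

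Assuming such an $\cE$ is produced, the remaining properties follow easily. Since $\varrho_X=1$ and $c_1(\cE)=0$, any rank $1$ destabilizing subsheaf must be of the form $\cI_{Z/X}\otimes\cO_X(ah)$ with $a\le 0$, and $a=0$ would produce a section contradicting $h^0(\cE)=0$; hence $\mu$--semistability is automatic and at the same time upgrades to $\mu$--stability. Decomposability is ruled out by Theorem \ref{tSimple}(2): a splitting $\cO_X(D)\oplus\cO_X(-D)$ with $Dh^2=0$ on a prime Fano threefold forces $D$ to be numerically zero, incompatible with the vanishings \eqref{VanishingDecomposable}. The quantum number equals that of $\cE_0$ by construction, which is the prescribed even value $\quantum\ge 4$.

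The main obstacle is, as expected, the construction of the chain itself: one has to ensure that the elementary transformations along ordinary lines can be assembled into an actual flat family whose generic member is both locally free and free of global sections, without destroying the instantonic vanishing or altering the Chern classes. The essential role of the hypothesis ``$X$ ordinary'' enters precisely here, as the normal bundle type $\cO_{\p1}\oplus\cO_{\p1}(-1)$ of the auxiliary line $\ell$ is exactly what controls how the elementary transformations behave on $c_1$ and on sections, allowing one to effectively decrease $h^0$ along the family while remaining inside the component of the moduli of simple bundles containing $[\cE_0]$.
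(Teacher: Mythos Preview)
Your proposal has a genuine gap at its core. The mechanism you sketch --- ``elementary transformations along an ordinary line $\ell$ \ldots\ while keeping $c_1$ and $c_2$ fixed'' --- is not coherent as written: the kernel of a surjection $\cE_0\twoheadrightarrow\cO_\ell$ has $c_2$ increased by the class of $\ell$, so the quantum number changes by $1$, not $0$. No construction is actually given that moves $[\cE_0]$ inside a fixed moduli space, and the heuristic that the section--carrying locus ``fibres over a Hilbert scheme'' is not a dimension count; you never verify that this locus is a \emph{proper} closed subset of the component through $[\cE_0]$, nor that $\cE_0$ (which has $h^0=1$, hence is only strictly $\mu$--semistable and is not covered by Proposition~\ref{pSimple}) is a smooth point with controlled $\Ext^2$. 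You also misplace the role of ordinariness: the normal bundle $\cO_{\p1}\oplus\cO_{\p1}(-1)$ of a line plays no role in any elementary transformation in the paper's argument.

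The paper proceeds quite differently. It does \emph{not} start from $m=\quantum/2$ conics and try to kill the section. Instead it argues by induction on even $\quantum\ge2$. For $\quantum=2$ one takes the Serre bundle $\cE_2$ associated to a \emph{single} general conic $Z$; for $\quantum\ge4$ one assumes the existence of a $\mu$--stable instanton $\cE_\quantum$ with $\Ext^2_X(\cE_\quantum,\cE_\quantum)=0$ and trivial restriction to a general conic. The inductive step (Construction~\ref{conPrime}) is an elementary transformation along a general \emph{conic} $C$: one sets $\cE_{\quantum,\varphi}:=\ker\big(\cE_\quantum\twoheadrightarrow\cO_C\big)$, a torsion--free non--locally--free sheaf with quantum number $\quantum+2$. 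Proposition~\ref{pKer} establishes that $\cE_{\quantum,\varphi}$ is simple with $h^0=h^1(-h)=0$ and $\Ext^2=\Ext^3=0$. The heart of the argument is Proposition~\ref{pDeform}: one compares $\dim\Ext^1_X(\cE_{\quantum,\varphi},\cE_{\quantum,\varphi})=2\quantum+1$ with the dimension $\le2\quantum$ of the locus of sheaves obtainable in this way, so a general deformation escapes that locus; a careful analysis of the double--dual sequence then forces the general deformation to be locally free, hence a $\mu$--stable instanton with quantum number $\quantum+2$. Ordinariness enters only through Lemma~\ref{lConic} and the base case borrowed from \cite[Theorem 4.1]{B--F1}, not through any operation along lines.
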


Again $\quantum=4$ is the minimal admissible value for the quantum number of an even instanton bundle on an ordinary prime Fano threefold given by Theorem \ref{tSharp}. Thus Theorem \ref{tSharp} is sharp also if $i_X=1$, at least in the case $\varrho_X=1$. 

The problem of the existence of instanton bundles when $\varrho_X\ge2$ remains wide open in general, though it is not difficult to show the existence of even instanton bundles in several specific cases via Serre construction. Finally, in Remark \ref{rCanonical} we  suggest a completely different possible approach for constructing minimal instanton bundles when $\varrho_X\ge2$.
 
\medbreak
An important property of instanton bundles on $\p3$, occasionally assumed in some classical definitions, is that they are trivial when restricted to the general line, thanks to the Grauert--M\"ulich theorem (see \cite[Corollary 2 of Theorem II.2.1.4]{O--S--S}). A line on $X$ is a curve $L$ with Hilbert polynomial $t+1$, i.e. such that $\deg(L)=1$ and $p_a(L)=0$: if $\cO_X(h)$ is very ample, then lines on $X$ are exactly the curves which are lines with respect to the embedding induced by $\cO_X(h)$. In what follows we will denote by $\Lambda(X)$ the Hilbert scheme of lines on $X$.

\begin{definition}
Let $X$ be a Fano threefold.

We say that an instanton bundle $\cE$ on $X$ with $c_1(\cE)=-\varepsilon h$ is generically trivial on the component $\Lambda(X)_0 \subseteq \Lambda(X)$, if $h^1\big(\cE((i_X+\varepsilon-2q_X^\varepsilon -1)h)\otimes \cO_L\big)=0$ when $L\in\Lambda(X)_0$ is general. $\cE$ is generically trivial, if it is generically trivial on each component.
\end{definition}

When $i_X\ge2$, then each Fano threefold is covered by lines, while this is no longer true if $i_X=1$. 

Generically trivial ordinary instanton bundles have been constructed on several Fano threefolds (\cite{Fa,M--M--PL,A--M,Cs--Ge, A--C--G}). Thus it is quite natural to analyze the behaviour of the instanton bundles whose existence is guaranteed by Theorems \ref{tExistence} and \ref{tPrime} when restricted to general lines: we make such an analysis in Section \ref{sEarnest} where we prove the following result.
 
\begin{theorem}
\label{tTrivial}
Let $X$ be a Fano threefold  with very ample $\cO_X(h)$.

Then the bundles obtained via Construction \ref{conInstanton} are generically trivial. Moreover, the general bundle as in Theorem \ref{tPrime} is generically trivial. 
\end{theorem}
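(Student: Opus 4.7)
I prove the two claims of the theorem separately.

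For the first claim, let $\cE$ be a bundle from Construction \ref{conInstanton}. By construction, $\cE$ fits in a Serre--type exact sequence
\[
0 \longrightarrow \cO_X(-th) \longrightarrow \cE \longrightarrow \cI_{C/X}((t-\varepsilon)h) \longrightarrow 0,
\]
where $C \subset X$ is a disjoint union of smooth conics and $t = (i_X+\varepsilon-1)/2$ is fixed by the adjunction $\omega_C \cong (\omega_X \otimes \det \cE)|_C$ (an integer since the bundles under consideration are non-ordinary). Fix any component $\Lambda(X)_0 \subseteq \Lambda(X)$ and take a general $L \in \Lambda(X)_0$. A standard dimension count gives $L \cap C = \emptyset$: when $i_X \geq 2$ the threefold $X$ is covered by lines, so $\dim \Lambda(X)_0 \geq 1$, while the lines meeting any fixed conic form a proper sublocus. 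Hence $\cI_{C/X} \otimes \cO_L \cong \cO_L$ and $\sTor_1^{\cO_X}(\cI_{C/X}, \cO_L) = 0$, so the sequence restricts exactly to
\[
0 \longrightarrow \cO_L(-t) \longrightarrow \cE|_L \longrightarrow \cO_L(t-\varepsilon) \longrightarrow 0
\]
on $L \cong \p1$. Twisting by $\cO_L(i_X + \varepsilon - 2q_X^\varepsilon - 1)$ and chasing the long exact cohomology sequence reduces the required vanishing to a direct $\p1$--computation depending only on $t$, $\varepsilon$, and $i_X$.

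For the second claim, the instanton bundle $\cE$ of Theorem \ref{tPrime} arises as the general element of a flat family $\{\cE_s\}_{s \in S}$ whose fiber at a distinguished $s_0 \in S$ is a Construction \ref{conInstanton}--type bundle $\cE_{s_0}$. Although $\cE_{s_0}$ is not itself an instanton bundle, by the first claim it satisfies $h^1\bigl(\cE_{s_0}((i_X+\varepsilon-2q_X^\varepsilon-1)h)|_L\bigr)=0$ for general $L \in \Lambda(X)_0$. Consider the incidence locus
\[
Z := \{(L,s) \in \Lambda(X) \times S : h^1(\cE_s((i_X + \varepsilon - 2q_X^\varepsilon - 1)h)|_L) \neq 0\}.
\]
By semicontinuity on the flat family over $X \times \Lambda(X) \times S$, $Z$ is closed. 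By the first claim, the fiber $Z_{s_0}$ is a proper closed subset of $\Lambda(X) \times \{s_0\}$, so the set of $s$ for which $Z_s$ is proper is open in $S$ and contains $s_0$; intersecting with the open nonempty locus where $\cE_s$ is an instanton bundle yields generic triviality for a generic $\cE_s$.

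The main obstacle lies in the numerical step at the end of the first claim: when $t$ exceeds the absolute value of the twist $i_X + \varepsilon - 2q_X^\varepsilon - 1$, the restricted Serre sequence only gives an upper bound on $h^1(\cE|_L(\cdot))$, and the desired vanishing requires surjectivity of the connecting map $H^0(\cO_L(t-\varepsilon+m)) \to H^1(\cO_L(-t+m))$ (with $m = i_X+\varepsilon-2q_X^\varepsilon-1$). This surjectivity should be enforced by $\mu$--semistability through a Grauert--M\"ulich--type argument, forcing the splitting type of $\cE|_L$ on a general line $L$ to be as balanced as possible. A secondary difficulty is locating, within the deformation chain of Theorem \ref{tPrime}, a suitable specialization to a Construction \ref{conInstanton}--type bundle so that semicontinuity transfers the vanishing.
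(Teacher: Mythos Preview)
Your plan has the right shape but contains two genuine gaps, one in each half.

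\textbf{First claim.} For $i_X=2$ (with $\varepsilon=1$, so $t=1$ and $m=0$) your restricted sequence is $0\to\cO_L(-1)\to\cE\vert_L\to\cO_L\to 0$, and the required vanishing is immediate. But for $i_X\ge3$ the numerics force $t\ge 1$ while the relevant twist $m=i_X+\varepsilon-2q_X^\varepsilon-1$ equals $0$ (on $\p3$, $\varepsilon=1$) or $-2$ (on $Q$, $\varepsilon=0$). Concretely, on $\p3$ one gets $0\to\cO_L(-2)\to\cE\vert_L\to\cO_L(1)\to0$, which is perfectly compatible with the unbalanced splitting $\cO_L(-2)\oplus\cO_L(1)$; the connecting map you mention has no reason to be surjective for that particular sequence. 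The paper does \emph{not} use the restricted Serre sequence for $\p3$ and $Q$: it invokes Grauert--M\"ulich on $\p3$ (\cite[Corollary 2 of Theorem II.2.1.4]{O--S--S}) and its analogue for $Q$ (\cite[Proposition 5.3]{C--F}) directly, using only the $\mu$--stability of $\cE$. Your ``obstacle'' paragraph points at this but then frames it as controlling a connecting map in the restricted sequence, which is the wrong mechanism---one should instead discard the Serre sequence entirely and apply the cited restriction theorems.

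\textbf{Second claim.} The specialization you describe is not what happens. The bundle $\cE$ of Theorem~\ref{tPrime} with quantum number $\quantum+2$ is a general point in a family whose special fibre is the \emph{non--locally--free} sheaf $\cE_{\quantum,\varphi}=\ker(\cE_\quantum\twoheadrightarrow\cO_C)$, not a Construction~\ref{conInstanton} bundle; moreover $\cE_\quantum$ itself (for $\quantum\ge4$) is already a deformation and not of Construction~\ref{conInstanton} type. The paper runs an \emph{induction on $\quantum$}: assuming $\cE_\quantum\vert_L\cong\cO_{\p1}^{\oplus2}$ for a general $L$, Lemma~\ref{lConic} gives $L\cap C=\emptyset$, so restricting Sequence~\eqref{seqDeform} to $L$ yields $\cE_{\quantum,\varphi}\vert_L\cong\cE_\quantum\vert_L\cong\cO_{\p1}^{\oplus2}$, and then semicontinuity transfers this to the general deformation $\cE$. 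The base step $\quantum=2$ \emph{does} use a Construction~\ref{conInstanton} bundle $\cE_2$ (here $i_X=1$, $t=0$, and the restricted sequence $0\to\cO_L\to\cE_2\vert_L\to\cO_L\to0$ splits trivially). Your single--step semicontinuity scheme therefore only works for $\quantum=4$; beyond that you need the inductive structure, together with the key observation that $\cE_{\quantum,\varphi}\vert_L\cong\cE_\quantum\vert_L$ when $L$ misses $C$.
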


In the same section, we also show that such bundles also behave well when restricted to smooth and irreducible divisors (see Proposition \ref{pEarnest}).
 
\medbreak
Theorem \ref{tSimple} implies that indecomposable instanton bundles correspond to points inside the moduli space $\cS_X$ of simple torsion--free sheaves (see \cite{A--K} for details about such a space). Moreover, the instantonic condition implies by semicontinuity that such points fill an open subset $\cS\cI_X$.

When $X\cong\p3$, the moduli space of instanton bundles with fixed charge is irreducible (see \cite{Tik1,Tik2}) and smooth (see \cite{J--V}). In \cite{Fa}, the author proves that for many other families of Fano threefolds $X$ with $\varrho_X=1$ the moduli space of instanton bundles with fixed charge has a generically smooth irreducible component. Similar results have been obtained in \cite{M--M--PL,C--C--G--M,A--M,Cs--Ge, A--C--G} for other families of Fano threefolds. 

In Section \ref{sModuli} we check that the instanton bundles given by Construction \ref{conInstanton} represent smooth points of one and the same component of their moduli space (see Proposition \ref{pModuli}). 

An instanton bundle is called {\sl minimal}, if its quantum number $\quantum$ is as small as possible: they typically carry additional properties (e.g. see \cite[Remark 4.3]{C--C--G--M}). As a by--product of Theorem \ref{tExistence} we infer that the moduli space of minimal non--ordinary instanton bundles $\cE$ on Fano threefolds $X$ with $i_X=2$ and $\varrho_X=1$ is irreducible and smooth, because such bundles are the ones studied in \cite{S--W} (see Remark \ref{rMinimalIndex1}). In particular such minimal non--ordinary instanton bundles are  {\sl aCM (with respect to $\cO_X(h)$)}, i.e.  $h^i\big(\cE(th)\big)=0$ for $i=1,2$ and each $t\in\bZ$ (see also \cite{A--C}). 

Moreover, we also show that each instanton bundle obtained as in Theorem \ref{tPrime} is a smooth point inside the component of the moduli space which contains it (see Proposition \ref{pModuliPrime}).

\medbreak
Finally, in the last Section \ref{sMonad}, we focus on non-ordinary instanton bundles on the two Fano threefolds with $i_X\ge 3$. In particular we show that non-ordinary instanton bundles on $\p3$ can be realized as cohomology of a monad, similarly to the classical ordinary case. Using such a monad we are also able to explicitly describe the locus of jumping lines for non--ordinary instanton bundles.

\subsection{Acknowledgements}
The authors express their thanks the referee for her/his comments, remarks and suggestions which have considerably improved the whole exposition correcting several inaccuracies. The authors also thank R. Notari for a fruitful discussion about the proof of Proposition \ref{pStable}. 

\section{General results}
\label{sGeneral}
We list below some general helpful results used throughout the whole paper. Let $X$ be any smooth variety with canonical line bundle $\omega_X$.

Let $\cF$ be a vector bundle of rank $2$ on $X$ and let $s\in H^0\big(\cF\big)$. In general its zero--locus
$(s)_0\subseteq X$ is either empty or has codimension at most
$2$. We can always write $(s)_0=Z\cup S$
where $Z$ has codimension $2$ (or it is empty) and $S$ has pure codimension
$1$ (or it is empty). In particular $\cF(-S)$ has a section vanishing
on $Z$, thus we can consider its Koszul complex 
\begin{equation}
  \label{seqSerre}
  0\longrightarrow \cO_X(S)\longrightarrow \cF\longrightarrow \cI_{Z\vert X}(-S)\otimes\det(\cF)\longrightarrow 0.
\end{equation}
Moreover we also have the standard exact sequence
\begin{equation}
\label{seqStandard}
 0 \longrightarrow\cI_{Z\vert X} \longrightarrow \cO_X \longrightarrow \cO_Z \longrightarrow 0
\end{equation}
Sequence \ref{seqSerre} tensored by $\cO_Z$ yields $\cI_{Z\vert X}/\cI^2_{Z\vert X}\cong\cF^\vee(S)\otimes\cO_Z$, whence the normal bundle of $Z$ inside $X$ satisfies
\begin{equation}
\label{Normal}
\cN_{Z\vert X}\cong\cF(-S)\otimes\cO_Z.
\end{equation}
Thus $Z$ is locally complete intersection inside $X$, because $\rk(\cF)=2$. In particular, it has no embedded components.

The Serre correspondence allows us to revert the above construction as follows.

\begin{theorem}
  \label{tSerre}
  Let $Z\subseteq X$ be a local complete intersection subscheme of codimension $2$.
  
  If $\det(\cN_{Z\vert X})\cong\cO_Z\otimes\mathcal L$ for some $\mathcal L\in\Pic(X)$ such that $h^2\big(\mathcal L^\vee\big)=0$, then there exists a vector bundle $\cF$ of rank $2$ on $X$ such that:
  \begin{enumerate}
  \item $\det(\cF)\cong\mathcal L$;
  \item $\cF$ has a section $s$ such that $Z$ coincides with the zero--locus $(s)_0$ of $s$.
  \end{enumerate}
  Moreover, if $H^1\big({\mathcal L}^\vee\big)= 0$, the above two conditions  determine $\cF$ up to isomorphism.
\end{theorem}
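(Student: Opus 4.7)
The plan is to build $\cF$ as an extension of $\cI_{Z\vert X}\otimes\mathcal{L}$ by $\cO_X$, mimicking the reverse of sequence \eqref{seqSerre} with $S=0$. Concretely, I would look for an exact sequence
$$
0\longrightarrow\cO_X\longrightarrow\cF\longrightarrow\cI_{Z\vert X}\otimes\mathcal{L}\longrightarrow 0
$$
whose extension class $\xi\in\Ext^1\big(\cI_{Z\vert X}\otimes\mathcal{L},\cO_X\big)$ is chosen so that the middle term $\cF$ is automatically locally free. Tensoring a hom with $\mathcal{L}$ shows that this $\Ext$ group coincides with $\Ext^1\big(\cI_{Z\vert X},\mathcal{L}^\vee\big)$.

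First I would compute the local $\sExt$ sheaves. Because $Z$ has codimension $2$ in the smooth variety $X$, the natural map $\cO_X\to\sHom(\cI_{Z\vert X},\cO_X)$ is an isomorphism; applying $\sHom(-,\cO_X)$ to the standard sequence \eqref{seqStandard} and using that $Z$ is LCI of codimension $2$, one obtains
$$
\sExt^0\big(\cI_{Z\vert X},\mathcal{L}^\vee\big)\cong\mathcal{L}^\vee,
\qquad
\sExt^1\big(\cI_{Z\vert X},\mathcal{L}^\vee\big)\cong\sExt^2(\cO_Z,\cO_X)\otimes\mathcal{L}^\vee\cong\det(\cN_{Z\vert X})\otimes\mathcal{L}^\vee,
$$
the last identification coming from the Koszul resolution of $\cO_Z$ (or equivalently from adjunction, $\sExt^2(\cO_Z,\cO_X)\cong\omega_Z\otimes\omega_X^{-1}|_Z\cong\det\cN_{Z\vert X}$). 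By hypothesis this last sheaf is isomorphic to $\cO_Z$.

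The five--term exact sequence of the local--to--global spectral sequence becomes
$$
H^1\big(\mathcal{L}^\vee\big)\longrightarrow\Ext^1\big(\cI_{Z\vert X}\otimes\mathcal{L},\cO_X\big)\mapright{\alpha}H^0\big(\cO_Z\big)\longrightarrow H^2\big(\mathcal{L}^\vee\big).
$$
The assumption $h^2\big(\mathcal{L}^\vee\big)=0$ guarantees that $\alpha$ is surjective, so I can lift the constant section $1\in H^0(\cO_Z)$ to a class $\xi$, which then corresponds to the desired extension. To conclude that the middle term $\cF$ is locally free, I would argue locally at a point $z\in Z$: since $Z$ is LCI, $\cI_{Z\vert X,z}$ is generated by a regular sequence $(f_1,f_2)$, and one checks that an element of $\Ext^1\big(\cI_{Z\vert X},\cO_X\big)_z$ whose image generates the stalk of $\sExt^1$ corresponds exactly to the Koszul resolution, whose middle term is the free module $\cO_{X,z}^{\oplus 2}$. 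The fact that $\alpha(\xi)=1$ is a global generator of $\cO_Z$ ensures this holds at every point of $Z$, while away from $Z$ the sequence obviously yields a locally free $\cF$. The inclusion $\cO_X\hookrightarrow\cF$ provides a section $s$ whose vanishing locus is $Z$ by construction, and from the extension one reads off $\det(\cF)\cong\det(\cO_X)\otimes\det(\cI_{Z\vert X}\otimes\mathcal{L})\cong\mathcal{L}$.

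For the uniqueness statement, given any $\cF$ satisfying (1) and (2), the corresponding section produces an extension of the above shape whose class in $\Ext^1$ maps under $\alpha$ to a generator of $\cO_Z$ (this is a local statement at $Z$ and follows from the isomorphism \eqref{Normal}). If also $H^1\big(\mathcal{L}^\vee\big)=0$, then $\alpha$ is injective, so the extension class is uniquely determined up to the $\bC^*$--action rescaling the generator, and rescaling the class gives isomorphic extensions; hence $\cF$ is unique up to isomorphism. The main technical obstacle is the local freeness step: everything else is a direct cohomological computation, but to turn the abstract extension into a rank $2$ vector bundle one has to carefully match the global class $\xi$ with its stalks and invoke the LCI hypothesis to compare with a Koszul complex.
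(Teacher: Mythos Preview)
Your sketch is correct and follows the standard Hartshorne--Serre argument via the local--to--global spectral sequence; the paper itself does not give a proof but simply refers to \cite{Ar}, and the approach you outline is precisely the one carried out there.
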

\begin{proof}
See \cite{Ar}.
\end{proof}

If $\cF$ and $\cG$ are coherent sheaves on $X$, then  the Serre duality holds
\begin{equation}
\label{SerreDual}
\Ext_X^i\big(\cF,\cG\otimes\omega_X\big)\cong \Ext_X^{\dim(X)-i}\big(\cG,\cF\big)^\vee
\end{equation}
(see \cite[Proposition 7.4]{Ha3}). Moreover, we define
$$
\chi(\cG,\cF):=\sum_{i=0}^{\dim(X)}\dim\Ext^i_X\big(\cG,\cF\big).
$$
Notice that if $\cG$ is locally free, then $\chi(\cG,\cF)=\chi(\cF\otimes\cG^\vee)$. If $\cF$ is locally free, Equality \eqref{SerreDual} gives $\chi(\cG,\cF)=(-1)^{\dim(X)}\chi(\cG\otimes\cF^\vee\otimes\omega_X)$.

Consider a locally complete intersection subscheme $Z\subseteq X$. Then
\begin{equation}
\label{Local}
\sExt^i_X\big(\cO_Z,\cO_Z\big)\cong\sExt^{i-1}\big(\cI_{Z\vert X},\cO_Z\big)\cong\wedge^i\cN_{Z\vert X}
\end{equation}
where $\cN_{Z\vert X}^\vee:=\cI_{Z\vert X}/\cI_{Z\vert X}^2\cong\cI_{Z\vert X}\otimes\cO_Z$.

The Riemann--Roch formula for a vector bundle $\cF$ on a threefold $X$ is
\begin{equation}
 \label{RRGeneral}
\begin{aligned}
     \chi(\cF)&=\rk(\cF)\chi(\cO_X)+{\frac16}(c_1(\cF)^3-3c_1(\cF)c_2(\cF)+3c_3(\cF))\\
    &-{\frac14}(\omega_Xc_1(\cF)^2-2\omega_Xc_2(\cF))+{\frac1{12}}(\omega_X^2c_1(\cF)+c_2(\Omega_{X}) c_1(\cF))
\end{aligned}
\end{equation}
(see \cite[Theorem A.4.1]{Ha2}).

We now assume that $X$ is a Fano threefold. 
The following lemma will be helpful.

\begin{lemma}
\label{lExt3}
Let $X$ be a Fano threefold.

If $\cF$ is a simple vector bundle on $X$, then $\Ext^{3}\big(\cF,\cF\big)=0$.
\end{lemma}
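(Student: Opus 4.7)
The plan is to use Serre duality (as recorded in Equality \eqref{SerreDual}) to reduce the vanishing to a statement about morphisms, namely
\[
\Ext^3_X\big(\cF,\cF\big)\cong \Hom_X\big(\cF,\cF\otimes\omega_X\big)^\vee.
\]
Thus it suffices to show that every morphism $\varphi\colon\cF\to\cF\otimes\omega_X$ is zero, and I will argue this by contradiction: assume $\varphi\neq 0$ and build a nontrivial endomorphism of $\cF$ that is forced to vanish by the Fano hypothesis.

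The key input from $X$ being Fano is that $\omega_X^{-1}=\cO_X(i_Xh)$ is ample; combined with Kodaira vanishing (which yields $h^i\big(\omega_X^{-1}\big)=0$ for $i>0$) and Riemann--Roch, this gives $h^0\big(\omega_X^{-1}\big)>0$. Pick a nonzero $s\in H^0\big(\omega_X^{-1}\big)$, viewed as an injective map of line bundles $s\colon\omega_X\hookrightarrow\cO_X$ (injectivity because $\omega_X$ is torsion--free and the map is nonzero). Tensoring with $\cF$ (which is flat) keeps the morphism $\mathrm{id}_\cF\otimes s\colon\cF\otimes\omega_X\hookrightarrow\cF$ injective, so the induced composition map on global sections
\[
\Hom_X\big(\cF,\cF\otimes\omega_X\big)\longrightarrow\Hom_X\big(\cF,\cF\big),\qquad \varphi\longmapsto s\cdot\varphi,
\]
is injective by left exactness of $\Hom_X(\cF,\cdot)$. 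It therefore suffices to prove that $s\cdot\varphi=0$ in $\Hom_X\big(\cF,\cF\big)$.

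Here simplicity enters: $\Hom_X\big(\cF,\cF\big)\cong\bC$, so $s\cdot\varphi=\lambda\cdot\mathrm{id}_{\cF}$ for some $\lambda\in\bC$. To compute $\lambda$, evaluate at any point $x\in X$ where $s$ vanishes; such a point exists because $\omega_X^{-1}\not\cong\cO_X$ (as it is ample and $X$ is positive--dimensional), so a nonzero section of an ample line bundle on a projective variety of positive dimension must have zeros. At such an $x$ the fibre of $s\cdot\varphi$ is zero, forcing $\lambda=0$, hence $s\cdot\varphi=0$. By the injectivity established in the previous step, $\varphi=0$, a contradiction.

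The main technical point is to ensure that $s$ can be chosen and that it must have zeros; both reduce to elementary facts about ample line bundles on Fano threefolds, so I do not expect a genuine obstacle here. The conceptual move is the translation, via Serre duality and simplicity, of the cohomological vanishing $\Ext^3=0$ into the injectivity of composition with a section of $\omega_X^{-1}$.
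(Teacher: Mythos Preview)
Your proof is correct and follows essentially the same route as the paper: Serre duality identifies $\Ext^3_X(\cF,\cF)^\vee$ with $\Hom_X(\cF,\cF\otimes\omega_X)$, a nonzero section of $\omega_X^{-1}$ embeds this space into $\Hom_X(\cF,\cF)\cong\bC$, and one checks the image is zero. The only cosmetic difference is the final step: the paper argues via $\det(\varphi)\in H^0\big(\cO_X(-i_X\rk(\cF)h)\big)=0$, so the induced endomorphism cannot be an automorphism; you instead evaluate at a zero of $s$ to force the scalar $\lambda$ to vanish---both reach the same conclusion.
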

\begin{proof}
Equality \eqref{SerreDual} implies $\Ext^{3}_{X}\big(\cF,\cF\big)^\vee\cong\Hom_{X}\big(\cF,\cF(-i_Xh)\big)$. In order to prove the statement, it then suffices to check that the latter space is zero. On the one hand, $\varphi\in\Hom_{X}\big(\cF,\cF(-i_Xh)\big)$, hence $\det(\varphi)\in H^0\big(\cO_X(-i_X\rk(\cF)h)\big)=0$. On the other hand $\Hom_{X}\big(\cF,\cF(-i_Xh)\big)\subseteq\Hom_{X}\big(\cF,\cF\big)$, because $h^0\big(\cO_X(h)\big)\ne0$ (see \cite[Theorem 2.3.1]{I--P}). As $\cF$ is simple, each non zero endomorphism of $\cF$ is an automorphism. We deduce  that necessarily $\varphi=0$.
\end{proof}

We are interested in $\mu$--semistable bundles, hence the following lemmas will be useful.

\begin{lemma}
\label{lHoppe}
Let $X$ be  a Fano threefold.

If $\cF$ is a rank $2$ vector bundle on $X$, then $\cF$ is $\mu$--stable (resp. $\mu$--semistable) with respect to $\cO_X(h)$ if and only if  $h^0\big(\cF(-D)\big)=0$ for each divisor $D\subseteq X$ such that $Dh^2 \ge\mu(\cF)$ (resp. $>\mu(\cF)$).
\end{lemma}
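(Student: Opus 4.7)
The plan is to use the standard Hoppe-type argument, resting on the identification $H^0\bigl(\cF(-D)\bigr)\cong\Hom_X\bigl(\cO_X(D),\cF\bigr)$. Since $\cO_X(D)$ is torsion-free of rank one, every nonzero morphism $\cO_X(D)\to\cF$ is automatically injective, so nonvanishing of $h^0\bigl(\cF(-D)\bigr)$ is equivalent to the existence of an embedding of $\cO_X(D)$ as a rank one subsheaf of $\cF$. Moreover, since $\cF$ has rank $2$, every proper subsheaf of positive rank has rank $1$, so testing $\mu$-(semi)stability amounts to testing against rank one subsheaves only.

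For the ``only if'' direction I would argue by contrapositive. If $\cF$ is $\mu$-stable and some divisor $D$ with $Dh^2\ge\mu(\cF)$ admits a nonzero $s\in H^0\bigl(\cF(-D)\bigr)$, then by the observation above $\cO_X(D)$ embeds into $\cF$ as a rank one subsheaf with $\mu\bigl(\cO_X(D)\bigr)=Dh^2\ge\mu(\cF)$, contradicting $\mu$-stability. The semistable case is identical after replacing each non-strict inequality with the corresponding strict one.

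For the ``if'' direction I would argue by contradiction. Assume the vanishing hypothesis and suppose $\cF$ is not $\mu$-stable. Then there exists a rank one subsheaf $\cG\subseteq\cF$ with $\mu(\cG)\ge\mu(\cF)$. Replacing $\cG$ by its saturation $\tilde\cG$ in $\cF$, we may assume $\cF/\tilde\cG$ is torsion-free while $\mu(\tilde\cG)\ge\mu(\cG)\ge\mu(\cF)$. Since $X$ is smooth and $\cF$ is locally free of rank two, $\tilde\cG$, being the kernel of a surjection from a reflexive sheaf onto a torsion-free sheaf, is reflexive; as it has rank one, it is then a line bundle $\cO_X(D)$, and the numerical inequality gives $Dh^2\ge\mu(\cF)$. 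The inclusion $\cO_X(D)\hookrightarrow\cF$ produces a nonzero element of $H^0\bigl(\cF(-D)\bigr)$, contradicting the hypothesis. The semistable case is again analogous.

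I do not expect a serious obstacle: the whole statement is the standard Hoppe criterion specialized to rank two on a threefold. The only technical point deserving care is the identification of the saturated rank one subsheaf $\tilde\cG$ with a line bundle, which relies on the smoothness of $X$ together with the fact that rank one reflexive sheaves on a smooth variety are invertible.
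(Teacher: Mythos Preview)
Your argument is correct and is precisely the standard Hoppe criterion for rank two bundles; the paper's own proof consists of the single observation that $\Pic(X)$ is free together with a citation to \cite[Corollary 4]{J--M--P--S}, which packages exactly the saturation-and-reflexivity argument you spell out. So there is no genuine difference in approach: you have simply unpacked what the paper delegates to a reference, and your direct argument does not even require the freeness of $\Pic(X)$ that the cited result assumes.
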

\begin{proof}
The group $\Pic(X)$ is free, hence it suffices to apply \cite[Corollary 4]{J--M--P--S}.
\end{proof}

The following proposition partially clarifies the importance of the number $q_X^\varepsilon$ defined in the introduction.

\begin{lemma}
\label{lH12}
Let $X$ be a Fano threefold with very ample $\cO_X(h)$.

If $\cE$ is a rank $2$ $\mu$--semistable bundle on $X$ such that $c_1(\cE)=-\varepsilon h$ with $\varepsilon\in \left\{0,1\right\}$ and $h^1\big(\cE(-q_X^\varepsilon h)\big)=0$, then 
\begin{enumerate}
\item $h^1\big(\cE(th)\big)=0$ for $t\le -q_X^\varepsilon$;
\item $h^2\big(\cE(th)\big)=0$ for $t\ge-q_X^\varepsilon+1$ when $i_X-\varepsilon$ is odd and for $t\ge-q_X^\varepsilon$ when $i_X-\varepsilon$ is even.
\end{enumerate}
\end{lemma}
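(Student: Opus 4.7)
The plan is to prove (1) by descending induction on $t$ and to deduce (2) from (1) via Serre duality on $X$. Since $\cE$ has rank $2$ with $\det\cE\cong\cO_X(-\varepsilon h)$, one has $\cE^\vee\cong\cE(\varepsilon h)$; combined with $\omega_X\cong\cO_X(-i_Xh)$, Serre duality yields
\begin{equation*}
h^1\bigl(\cE(th)\bigr)=h^2\bigl(\cE((\varepsilon-i_X-t)h)\bigr).
\end{equation*}
Setting $s:=\varepsilon-i_X-t$ and separating the parities of $i_X-\varepsilon$, a direct computation shows that the range $t\le-q_X^\varepsilon$ in (1) corresponds exactly to $s\ge-q_X^\varepsilon$ when $i_X-\varepsilon$ is even and to $s\ge-q_X^\varepsilon+1$ when it is odd, i.e.\ to the range required in (2). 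Hence it is enough to prove (1).

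For (1), the base case $t=-q_X^\varepsilon$ is the hypothesis. Fixing $t\le-q_X^\varepsilon$ with $h^1(\cE(th))=0$, I choose a smooth irreducible $Y\in|h|$ via Bertini (available since $\cO_X(h)$ is very ample) and tensor the ideal sequence of $Y$ with $\cE(th)$. The portion of the long exact cohomology sequence
\begin{equation*}
H^0\bigl(\cE(th)\bigr)\to H^0\bigl(\cE(th)\otimes\cO_Y\bigr)\to H^1\bigl(\cE((t-1)h)\bigr)\to H^1\bigl(\cE(th)\bigr),
\end{equation*}
combined with the inductive hypothesis on the right and with the fact that $\mu(\cE(th))=(2t-\varepsilon)h^3/2<0$ in the whole range $t\le-q_X^\varepsilon$ (a case--by--case check using the definition of $q_X^\varepsilon$), so that Lemma \ref{lHoppe} with $D=0$ gives $H^0(\cE(th))=0$ on the left, reduces the inductive step to proving the vanishing $H^0(\cE(th)\otimes\cO_Y)=0$.

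To produce this vanishing on the surface $Y$, I pick another general $Y'\in|h|$ and form the smooth irreducible curve $C:=Y\cap Y'$. A standard restriction theorem for $\mu$--semistable sheaves (Mehta--Ramanathan/Flenner/Langer) applied twice shows that $\cE\otimes\cO_C$ is $\mu$--semistable on $C$ of degree $-\varepsilon h^3$, so $\cE(th)\otimes\cO_C$ has strictly negative slope and no global sections for $t\le-q_X^\varepsilon$. Feeding this into the analogous restriction sequence $0\to\cE((t-1)h)\otimes\cO_Y\to\cE(th)\otimes\cO_Y\to\cE(th)\otimes\cO_C\to0$ on $Y$ yields an isomorphism $H^0(\cE((t-1)h)\otimes\cO_Y)\cong H^0(\cE(th)\otimes\cO_Y)$, and iterating gives $h^0(\cE(th)\otimes\cO_Y)=h^0(\cE((t-k)h)\otimes\cO_Y)$ for every $k\ge0$. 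The common value vanishes for $k\gg0$ because otherwise $\cE\otimes\cO_Y$ would contain line subsheaves of unbounded slope, contradicting torsion--freeness.

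The delicate point of the argument is the restriction theorem for $\cE\otimes\cO_C$ applied to the complete intersection of two members of $|h|$ rather than of higher multiples of $h$; for rank $2$ bundles this is by now standard, but some care is required for small values of $h^3$. The rest of the proof is purely formal book--keeping with long exact sequences and slope inequalities.
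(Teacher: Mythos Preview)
Your reduction of (2) to (1) via Serre duality is identical to the paper's. For (1), your descending induction via a general $Y\in|h|$ is in the same spirit as the paper's argument, but your detour through the curve $C=Y\cap Y'$ introduces a genuine gap that you yourself flag: the restriction theorems of Mehta--Ramanathan, Flenner and Langer do \emph{not} give semistability of $\cE\otimes\cO_C$ for a complete intersection of two members of $|h|$ in general. Mehta--Ramanathan needs $m\gg0$; the effective bounds in Flenner/Langer depend on $h^3$ and the rank and can easily exceed $1$. Once you are on the surface $Y$ the bundle has rank $2=\dim Y$, so Maruyama's ``rank $<$ dimension'' criterion no longer applies to the second restriction. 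You do not provide any argument that covers the small $h^3$ cases, so this step is unjustified as written.

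The paper avoids this entirely by stopping at the surface. By \cite[Theorem 3.1]{Ma1}, since $\rk\cE=2<3=\dim X$, the restriction $\cE\otimes\cO_D$ to a general member of \emph{any} $|mh|$ is $\mu$--semistable; the paper takes $D\in|(-t-q_X^\varepsilon)h|$ in one shot. Then $\cE(-q_X^\varepsilon h)\otimes\cO_D$ has negative slope (your own computation $\mu(\cE(th))=(2t-\varepsilon)h^3/2<0$ specialized to the surface covers all cases, including $q_X^\varepsilon=0$), so $h^0\big(\cE(-q_X^\varepsilon h)\otimes\cO_D\big)=0$ by semistability, and the cohomology of $0\to\cE(th)\to\cE(-q_X^\varepsilon h)\to\cE(-q_X^\varepsilon h)\otimes\cO_D\to0$ gives $h^1(\cE(th))=0$ directly. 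In your inductive set--up the same fix works verbatim: apply Maruyama once to $Y\in|h|$ to get $\cE\otimes\cO_Y$ $\mu$--semistable, and then $H^0(\cE(th)\otimes\cO_Y)=0$ follows immediately from the negative slope, with no need for $C$ or the tail argument about unbounded sub--line--bundles.
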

\begin{proof}
If $D$ is a general surface of degree $-t-q_X^\varepsilon\ge1$, then $\cE\otimes\cO_D$ is $\mu$--semistable, thanks to \cite[Theorem 3.1]{Ma1}. If $i_X-\varepsilon\ne0$, then $q_X^\varepsilon\ge1$ and Lemma \ref{lHoppe} implies
\begin{equation}
\label{Restriction}
h^0\big(\cE(-q_X^\varepsilon h)\otimes\cO_D\big)=0.
\end{equation}
If $i_X-\varepsilon=0$, then $\varepsilon=1$ and $q_X^\varepsilon=0$. In this case $\mu(\cE\otimes\cO_D)=h^2D/2<0$, hence again Equality \eqref{Restriction} above holds true. The cohomology of the exact sequence
\begin{equation}
\label{seqMaruyama}
0\longrightarrow\cO_X(-D)\longrightarrow\cO_X\longrightarrow\cO_D\longrightarrow0
\end{equation}
tensored by $\cE(-q_X^\varepsilon h)$  yields the first assertion.

Thanks to Equality \eqref{SerreDual} we also obtain $h^2\big(\cE(th)\big)=h^1\big(\cE(-th)\big)=0$ for $t\ge-q_X^\varepsilon+1$ when $i_X-\varepsilon$ is odd and for $t\ge-q_X^\varepsilon$ when $i_X-\varepsilon$ is even.
\end{proof}

A helpful characterization of ordinary instanton bundles on a Fano threefold is related to the naturality of their cohomology in a certain range. We show below that such a characterization actually holds also in the non--ordinary case.

\begin{proposition}
\label{pNatural}
Let $X$ be a Fano threefold.

Every $\mu$--semistable bundle $\cE$ of rank $2$ on $X$ such that $(1-\varepsilon)h^0\big(\cE\big)=0$,  $c_1(\cE)=-\varepsilon h$ and with natural cohomology in degree $-q_X^\varepsilon$ is an instanton bundle.

Every instanton bundle $\cE$ on $X$ has natural cohomology in degree $t$ in the range $\varepsilon-i_X\le t \le 0$. More precisely, $h^i\big(\cE(\lambda h)\big)=0$ unless either $1-q_X^\varepsilon\le t\le0$ and $i=1$, or $\varepsilon-i_X\le t\le\varepsilon-i_X-1+q_X^\varepsilon$ and $i=2$.
\end{proposition}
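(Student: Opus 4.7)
The plan is to prove the more detailed second assertion first and then to read off the first assertion from the vanishings it yields at $t=-q_X^\varepsilon$ combined with Serre duality.

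For the second assertion I fix an instanton bundle $\cE$ and control the four groups $h^i(\cE(th))$ separately on the range $\varepsilon-i_X\le t\le 0$. The $\mu$--semistability hypothesis together with Lemma \ref{lHoppe} (applied to the divisor $-th$) gives $h^0(\cE(th))=0$ for $t\le -1$; the edge case $t=0$ is covered by $(1-\varepsilon)h^0(\cE)=0$ when $\varepsilon=0$ and by $\mu(\cE)=-h^3/2<0$ when $\varepsilon=1$. Serre duality then rewrites $h^3(\cE(th))$ as $h^0\big(\cE((\varepsilon-t-i_X)h)\big)$, which vanishes by the same argument since $\varepsilon-t-i_X\le 0$ throughout the target interval. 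The instantonic vanishing $h^1(\cE(-q_X^\varepsilon h))=0$ propagates via Lemma \ref{lH12}: part (1) kills $h^1(\cE(th))$ for $t\le -q_X^\varepsilon$ and part (2) kills $h^2(\cE(th))$ for $t\ge 1-q_X^\varepsilon$ in the non--ordinary case and for $t\ge -q_X^\varepsilon$ in the ordinary one. A direct check that $\varepsilon-i_X-1+q_X^\varepsilon$ equals $-q_X^\varepsilon$ when $i_X-\varepsilon$ is odd and $-q_X^\varepsilon-1$ when it is even then converts these bounds into the announced vanishing pattern.

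For the first assertion the vanishings $h^0(\cE(-q_X^\varepsilon h))=h^3(\cE(-q_X^\varepsilon h))=0$ are still available from $\mu$--semistability alone, so the naturality hypothesis at $-q_X^\varepsilon$ reduces to the condition that at most one of $h^1\big(\cE(-q_X^\varepsilon h)\big)$ and $h^2\big(\cE(-q_X^\varepsilon h)\big)$ is non-zero. When $i_X-\varepsilon$ is even, Serre duality identifies
\[
h^2\big(\cE(-q_X^\varepsilon h)\big)=h^1\big(\cE((\varepsilon+q_X^\varepsilon-i_X)h)\big)=h^1\big(\cE(-q_X^\varepsilon h)\big),
\]
so the two groups coincide and naturality forces each of them to vanish. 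When $i_X-\varepsilon$ is odd, one has instead $h^2\big(\cE(-q_X^\varepsilon h)\big)=h^1\big(\cE((1-q_X^\varepsilon)h)\big)$; supposing for contradiction that $h^1\big(\cE(-q_X^\varepsilon h)\big)\ne 0$, naturality would give $h^1\big(\cE((1-q_X^\varepsilon)h)\big)=0$, and tensoring the sequence \eqref{seqMaruyama} by $\cE((1-q_X^\varepsilon)h)$ for a general $D\in|\cO_X(h)|$ would produce an injection $H^1\big(\cE(-q_X^\varepsilon h)\big)\hookrightarrow H^1\big(\cE((1-q_X^\varepsilon)h)\big)=0$ as soon as $h^0\big(\cE((1-q_X^\varepsilon)h)\otimes\cO_D\big)=0$. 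The latter vanishing follows from Maruyama's restriction theorem (as already exploited in the proof of Lemma \ref{lH12}) applied to $\cE((1-q_X^\varepsilon)h)$, whose restriction to $D$ is $\mu$--semistable of slope $\tfrac{h^3}{2}(1-i_X)$.

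The main obstacle is the borderline non--ordinary case $i_X=1$, $\varepsilon=0$, where the restricted slope just computed vanishes and the semistability of the restriction does not immediately kill its sections. In this situation the bootstrap of the previous paragraph breaks down and one has to argue more carefully --- for instance by varying $D$ or by invoking the residual hypothesis $h^0(\cE)=0$ to rule out trivial summands of $\cE\otimes\cO_D$ --- which is the only point at which the routine combination of Serre duality and restriction to hyperplane sections is genuinely insufficient.
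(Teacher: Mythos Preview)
Your treatment of the second assertion is correct and matches the paper's approach. Your argument for the first assertion in the ordinary (even $i_X-\varepsilon$) case via the Serre--duality identification $h^1(\cE(-q_X^\varepsilon h))=h^2(\cE(-q_X^\varepsilon h))$ is actually cleaner than what the paper does there (it simply cites \cite[Proposition 4.1]{C--C--G--M}).

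The genuine gap is exactly the one you flag: the non--ordinary case $(i_X,\varepsilon)=(1,0)$. Your restriction argument collapses because the slope of $\cE\otimes\cO_D$ is zero, and neither of your suggested patches works. Once you assume $h^1(\cE)=0$ and use $h^0(\cE)=0$, the hyperplane restriction sequence gives $h^0(\cE\otimes\cO_D)=h^1(\cE(-h))$ for \emph{every} smooth $D\in|h|$, so varying $D$ cannot help: the vanishing you need is literally equivalent to the conclusion you want. Likewise, ruling out a trivial direct summand of $\cE\otimes\cO_D$ does not force $h^0=0$, since a non--split $\mu$--semistable bundle of slope zero may well have sections.

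The paper bypasses the restriction argument in the non--ordinary case and argues numerically. After establishing $h^0(\cE(-q_X^\varepsilon h))=h^3(\cE(-q_X^\varepsilon h))=0$ as you do, it invokes the Bogomolov inequality $4hc_2(\cE)\ge\varepsilon^2h^3$ (obtained by restricting to a general $S\in|dh|$ with $d\gg0$, so no very--ampleness of $\cO_X(h)$ is needed) together with a case--by--case Riemann--Roch computation to show $\chi(\cE(-q_X^\varepsilon h))\ge0$. Since naturality at $-q_X^\varepsilon$ allows at most one of $h^1,h^2$ to be nonzero and $\chi=-h^1+h^2$, the sign of $\chi$ forces $h^1(\cE(-q_X^\varepsilon h))=0$ directly, without ever needing to control sections of a restricted bundle.
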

\begin{proof}
The case when $i_X-\varepsilon$ is even is covered by \cite[Proposition 4.1]{C--C--G--M}, hence we can restrict our attention to the odd $i_X-\varepsilon$ case.
In this case $2q_X^\varepsilon=i_X+1-\varepsilon\ge2$, hence $q_X^\varepsilon\ge1$. If $\varepsilon=1$, then we know that $h^0\big(\cE\big)=0$, because $\cE$ is $\mu$--semistable. If $\varepsilon=0$ the same vanishing holds by definition.

Assume that $\cE$ has natural cohomology in degree $-q_X^\varepsilon$. We have
\begin{gather*}
h^0\big(\cE(-q_X^\varepsilon h)\big)\le h^0\big(\cE\big)=0,\qquad h^3\big(\cE(-q_X^\varepsilon h)\big)=h^0\big(\cE((1-q_X^\varepsilon) h)\big)\le h^0\big(\cE\big)=0.
\end{gather*}
If $\cO_X(dh)$ is very ample, then $\cE\otimes\cO_S$ is $\mu$--semistable for general $S\in \vert dh\vert$, thanks to \cite[Theorem 3.1]{Ma1}. Thus $4hc_2(\cE)\ge \varepsilon^2h^3$ by Bogomolov inequality. A case by case computation yields $\chi(\cE(-q_X^\varepsilon h))\ge0$. We deduce that $h^1\big(\cE(-q_X^\varepsilon h)\big)=0$ necessarily, hence $\cE$ is an instanton bundle.

Conversely, let $\cE$ be an instanton bundle. Equality \eqref{SerreDual} implies
\begin{equation}
\label{Natural}
h^i\big(\cE(t h)\big)=h^{3-i}\big(\cE(-(i_X-\varepsilon +t)h)\big)=h^{3-i}\big(\cE((1-2q_X^\varepsilon -t)h)\big).
\end{equation}
Thus the cohomology of $\cE$ in the range $\varepsilon-i_X=1-2q_X^\varepsilon\le t\le 0$ is known when it is known in the range  $1-q_X^\varepsilon\le t\le 0$. 

If $i_X-\varepsilon=1$, then $q_X^\varepsilon=1$, hence $t=0$. The assertion then follows from the hypothesis. If $i_X-\varepsilon=3$, then $q_X^\varepsilon=2$, hence $t=0,1$. Moreover, in this case $X$ is either $\p3$ or $Q$, hence $\cO_X(h)$ is very ample. Thus the statement follows by combining the definition, Equality \eqref{Natural} and Lemma \ref{lH12}.\end{proof}

\section{Endomorphism group of instanton bundles}
\label{sSimple}
In this section we deal with the endomorphism group of an instanton bundle. The lemma below is well--known.

\begin{lemma}
\label{lSimple}
Let $X$ be a smooth variety of dimension $n\ge2$ endowed with an ample line bundle $\cO_X(h)$. 

If $\cE$ is a strictly $\mu$--semistable bundle of rank $2$ on $X$, then it fits into an exact sequence
\begin{equation}
\label{seqExtensionCurve}
 0 \longrightarrow\cO_X(D) \longrightarrow \cE \longrightarrow \cI_{Z\vert X}(-D)\otimes\det(\cE) \longrightarrow 0
\end{equation}
where $\mu(\cO_X(D))=\mu(\cE)$ and $Z$ is either empty or of pure codimension $2$.
\end{lemma}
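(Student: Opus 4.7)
The plan is to produce the required sequence by saturating a slope--destabilizing subsheaf and then identifying the quotient via its reflexive hull. Since $\cE$ is strictly $\mu$--semistable of rank $2$, there exists a rank $1$ coherent subsheaf $\cG\subset\cE$ with $\mu(\cG)=\mu(\cE)$. Let $\cG'\subseteq\cE$ be its saturation, i.e. the preimage in $\cE$ of the torsion subsheaf of $\cE/\cG$, so that $\cE/\cG'$ is torsion--free of rank $1$. Since $\cG\subseteq\cG'$ have the same rank, $c_1(\cG')-c_1(\cG)$ is effective and hence $\mu(\cG')\ge\mu(\cG)=\mu(\cE)$; combined with $\mu$--semistability this forces $\mu(\cG')=\mu(\cE)$.

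I would next check that $\cG'$ is a line bundle. It is a rank $1$ torsion--free sheaf on the smooth $X$, so it is enough to prove that it is locally free at codimension $2$ points, because then $\cG'$ is reflexive and rank $1$ reflexive sheaves on a smooth variety are line bundles. Localizing at such a point, $\cE_x$ is free of rank $2$ over the regular local UFD ring $R=\cO_{X,x}$ and $\cG'_x\subset\cE_x$ is a rank $1$ submodule with torsion--free quotient; picking a generator $m=ae_1+be_2$ of $\cG'_x$, torsion--freeness of the quotient forces $\gcd(a,b)=1$ and from this one easily deduces that $\cG'_x=Rm$ is free of rank $1$. Writing $\cG'\cong\cO_X(D)$ provides the first term of the required sequence, with $\mu(\cO_X(D))=\mu(\cE)$ by the previous step.

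To identify the quotient $\cQ=\cE/\cO_X(D)$, pass to the double dual: $\cQ^{\vee\vee}$ is a line bundle $\mathcal{L}$, and the inclusion $\cQ\hookrightarrow\mathcal{L}$ writes $\cQ\cong\cI_{Z\vert X}\otimes\mathcal{L}$ for some ideal sheaf $\cI_{Z\vert X}$. Taking determinants in $0\to\cO_X(D)\to\cE\to\cQ\to 0$ yields $\mathcal{L}\cong\det(\cE)\otimes\cO_X(-D)$, whence $\cQ\cong\cI_{Z\vert X}(-D)\otimes\det(\cE)$ as claimed. The main technical point, and the step I expect to be the real obstacle, is the purity of $Z$. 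I would obtain it by going back to the local description above: the map $\alpha e_1+\beta e_2\mapsto \alpha b-\beta a$ identifies $\cQ_x$ with the ideal $(a,b)\subset R$; since $\gcd(a,b)=1$ in a regular local ring, $a,b$ form a regular sequence, so $R/(a,b)$ is Cohen--Macaulay of codimension $2$ and hence has no embedded primes and no components of codimension different from $2$. Globally this says that $Z$ is either empty or of pure codimension $2$.
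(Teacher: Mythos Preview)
Your strategy is sound, but there is a genuine slip in the step showing that $\cG'$ is a line bundle. The implication ``torsion--free rank $1$ and locally free at codimension $2$ points $\Rightarrow$ reflexive'' is false: on a smooth threefold the ideal sheaf of a point is torsion--free, locally free at every point of codimension $\le2$, yet not reflexive. Also, ``picking a generator $m$'' presupposes that $\cG'_x$ is cyclic, which is what you are trying to prove. The repair is immediate, because your local computation is valid at \emph{every} point of $X$, not only at codimension $2$: for any nonzero $m=ae_1+be_2\in\cG'_x$ with $g=\gcd(a,b)$, torsion--freeness of $\cE_x/\cG'_x$ forces $m/g\in\cG'_x$, so one may take $\gcd(a,b)=1$; then for any $n=ce_1+de_2\in\cG'_x$ the rank--$1$ condition gives $ad=bc$, whence $n\in Rm$. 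Thus $\cG'$ is locally free everywhere, and the reflexivity detour is unnecessary. A smaller omission: your determinant identity $\cL\cong\det(\cE)(-D)$ tacitly uses $c_1(\cI_{Z\vert X})=0$, i.e.\ that $Z$ has no codimension--$1$ part; this is true because $\cQ\to\cQ^{\vee\vee}$ is an isomorphism at codimension--$1$ points, but you should say so before taking determinants.

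For comparison, the paper proceeds differently at both junctures. To see that the saturation is a line bundle it invokes the standard chain from \cite{O--S--S}: a saturated subsheaf of a locally free sheaf is normal, hence reflexive, hence (rank $1$ on smooth $X$) invertible. To identify the quotient and obtain purity it does not use the double dual; instead it takes a nonzero section $s\in H^0(\cE(-D))$, writes $(s)_0=Z\cup S$ with $S$ of pure codimension $1$, and uses $\mu$--semistability together with the Nakai--Moishezon criterion to force $Sh^{n-1}=0$ and hence $S=0$. The purity of $Z$ then follows because $Z$ is a local complete intersection cut out by the two components of $s$. Your double--dual route is a legitimate alternative that bypasses Nakai--Moishezon entirely, and your local regular--sequence argument for purity is essentially the same as the paper's l.c.i.\ remark, phrased pointwise.
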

\begin{proof}
By definition, there is a subsheaf $\mathcal L\subseteq\cE$ of rank $1$ such that $\mu(\mathcal L)=\mu(\cE)$. 

Let $\mathcal Q:=\cE/\mathcal L$ and denote by $\cM$ the kernel of the natural morphism $\cE\to\mathcal Q/\mathcal T$, where $\mathcal T\subseteq\mathcal Q$ is the torsion subsheaf. By construction $\cM$ is torsion--free and of rank $1$: moreover, $\mu(\cM)\ge\mu(\mathcal L)=\mu(\cE)$, hence $\mu(\cM)=\mu(\cE)$ because $\cE$ is $\mu$--semistable. Since $\cE/\cM$ is torsion--free, it follows that $\cM$ is also normal thanks to \cite[Lemma II.1.1.16]{O--S--S}. Thus \cite[Lemma II.1.1.12]{O--S--S} implies that $\cM$ is reflexive, hence it is a line bundle, thanks to \cite[Lemma II.1.1.15]{O--S--S}. We deduce that $\cM\cong\cO_X(D)$ for some divisor $D$ on $X$.

Let $s\in H^0\big(\cE(-D)\big)$ be a non--zero section: with the notation introduced in the previous section we write  $(s)_0=Z\cup S$ and we can construct Sequence \eqref{seqSerre} with $\cF:=\cE(-D)$. In particular we have an injective map  $\cO_S(D+S)\to\cE$ and we can write the chain of inequalities
$$
0\le Sh^{n-1}=(D+S)h^{n-1}-Dh^{n-1}\le\mu(\cE)-\mu(\cO_X(D))=0,
$$
i.e. $S=0$, because $\cO_X(h)$ is ample (we are using the Nakai--Moishezon criterion: see \cite[Theorem A.5.1]{Ha2} and the references therein). Thus tensoring Sequence \eqref{seqSerre} with $\cF:=\cE(-D)$  by $\cO_X(D)$ we obtain Sequence \eqref{seqExtensionCurve}.
\end{proof}

Theorem \ref{tSimple} stated in the Introduction is an immediate by--product of the following proposition.

\begin{proposition}
\label{pSimple}
Let $X$ be a smooth variety of dimension $n\ge2$ endowed with an ample line bundle $\cO_X(h)$. Assume that $\Pic(X)$ has no non--trivial $2$--torsion elements and $\cO_X(h)$ is not $2$--divisible.

If $\cE$ is a rank $2$ indecomposable $\mu$--semistable bundle on $X$ such that $c_1(\cE)=-\varepsilon h$ with $\varepsilon\in\{\ 0,1\ \}$ and $(1-\varepsilon)h^0\big(\cE\big)=0$, then $\cE$ is simple.
\end{proposition}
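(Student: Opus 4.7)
The plan is to split into two cases. If $\cE$ is $\mu$-stable, then simplicity follows from the classical fact that a $\mu$-stable torsion--free sheaf on a smooth projective variety is simple (any nonzero endomorphism is an isomorphism, hence scalar after subtracting an eigenvalue). So the substantive case is when $\cE$ is strictly $\mu$-semistable. I would apply Lemma~\ref{lSimple} with $\det(\cE)=\cO_X(-\varepsilon h)$ to obtain an exact sequence
$$
0\longrightarrow \cO_X(D)\longrightarrow \cE\longrightarrow \cI_{Z\vert X}(-D-\varepsilon h)\longrightarrow 0
$$
with $Dh^{n-1}=-\varepsilon h^n/2$ (where $n=\dim X$) and $Z$ either empty or of pure codimension $2$.

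The key numerical observation is that both line bundles $\cO_X(\pm 2D+\varepsilon h)$ have $h^{n-1}$--degree zero, so that any nonzero global section forces such a line bundle to be isomorphic to $\cO_X$ (by the Nakai--Moishezon argument used in the proof of Lemma~\ref{lSimple}). Combined with the hypotheses, triviality of either $\cO_X(-2D-\varepsilon h)$ or $\cO_X(2D+\varepsilon h)$ produces a contradiction: for $\varepsilon=1$ it yields a $2$--divisibility of $\cO_X(h)$, ruled out by hypothesis, while for $\varepsilon=0$ the no--$2$--torsion hypothesis forces $\cO_X(D)\cong\cO_X$, giving $\cO_X\hookrightarrow\cE$ and contradicting $h^0(\cE)=0$ (this vanishing is part of the hypothesis when $\varepsilon=0$, and is forced by $\mu$--semistability when $\varepsilon=1$ since $\mu(\cE)<0=\mu(\cO_X)$).

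With that numerical lemma in hand, for any $\varphi\in\Hom_X(\cE,\cE)$ the composition $\cO_X(D)\hookrightarrow\cE\xrightarrow{\varphi}\cE\twoheadrightarrow\cI_{Z\vert X}(-D-\varepsilon h)$ is a section of $\cO_X(-2D-\varepsilon h)$ and thus vanishes. Consequently $\varphi$ preserves $\cO_X(D)$ and restricts there to scalar multiplication by some $\lambda\in\bC$. Setting $\psi:=\varphi-\lambda\cdot\mathrm{id}_\cE$, the endomorphism $\psi$ vanishes on $\cO_X(D)$ and factors as $\psi=\tilde\psi\circ\pi$ for some $\tilde\psi\colon\cI_{Z\vert X}(-D-\varepsilon h)\to\cE$. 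The composition $\pi\circ\tilde\psi$ lies in $\Hom_X(\cI_{Z\vert X},\cI_{Z\vert X})\cong\bC$ (an ideal sheaf of a codimension $\ge 2$ subscheme has $\sHom(\cI,\cI)=\cO_X$), hence equals $\mu\cdot\mathrm{id}$ for some $\mu\in\bC$. If $\mu\neq 0$, then $\tilde\psi/\mu$ splits the defining sequence and contradicts indecomposability. If $\mu=0$, then $\tilde\psi$ lifts to a map $\cI_{Z\vert X}(-D-\varepsilon h)\to\cO_X(D)$ which extends to a section of $\cO_X(2D+\varepsilon h)$, and the same degree--zero analysis forces that section to vanish. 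In either case $\varphi=\lambda\cdot\mathrm{id}_\cE$, so $\cE$ is simple.

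The main obstacle is not in any one step but in the bookkeeping of the $\Pic$--theoretic hypotheses: the no--$2$--torsion and non--$2$--divisibility conditions are tailored to the two parities of $\varepsilon$, and the extra hypothesis $(1-\varepsilon)h^0(\cE)=0$ is exactly what eliminates the residual trivial case $\cO_X(D)=\cO_X$ when $\varepsilon=0$. A minor technical check that I would spell out is that $\Hom_X(\cI_{Z\vert X},\cI_{Z\vert X})=\bC$ (and likewise that any map from $\cI_{Z\vert X}(-D-\varepsilon h)$ into a line bundle extends to the full $\cO_X(-D-\varepsilon h)$), which uses reflexivity of rank--one ideal sheaves of codimension $\ge 2$ subschemes.
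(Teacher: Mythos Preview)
Your proof is correct and uses the same key ingredients as the paper --- Lemma~\ref{lSimple}, the degree--zero observation $(\pm2D+\varepsilon h)h^{n-1}=0$, and the consequent vanishing $h^0\big(\cO_X(\pm2D+\varepsilon h)\big)=0$ forced by the $\Pic$--theoretic hypotheses --- but the organization differs.

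The paper argues by dimension count: it tensors Sequence~\eqref{seqExtensionCurve} with $\cE^\vee\cong\cE(\varepsilon h)$ to bound $h^0\big(\cE\otimes\cE^\vee\big)\le h^0\big(\cE(D+\varepsilon h)\big)+h^0\big(\cE\otimes\cI_{Z\vert X}(-D)\big)$, and then bounds each summand by further twisting the same sequence and invoking the vanishings above (with a separate treatment of $Z=\emptyset$ versus $Z\ne\emptyset$ via the connecting map $\partial$). You instead trace a single endomorphism $\varphi$ through the two--step filtration: you show $\varphi$ preserves $\cO_X(D)$, subtract a scalar, factor the remainder through the quotient, and use $\Hom_X(\cI_{Z\vert X},\cI_{Z\vert X})\cong\bC$ plus indecomposability to kill it. Your route avoids the case split on $Z$ and is perhaps slightly more transparent about \emph{why} the endomorphism is scalar, while the paper's approach is a bit more mechanical but does not need the auxiliary facts about $\cI_Z^\vee\cong\cO_X$ that you flag at the end. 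Both are standard ways to deduce simplicity from a length--two Jordan--H\"older filtration, and the substance is the same.
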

\begin{proof}
Trivially $h^0\big(\cE\otimes\cE^\vee\big)\ge1$, hence we have to prove the opposite inequality assuming $\cE$ indecomposable. If $\cE$ is $\mu$--stable, then it is simple thanks to \cite[Theorem II.1.2.9]{O--S--S}. Thus we will assume that $\cE$ is strictly $\mu$--semistable. 

Lemma \ref{lSimple} yields the existence of a divisor $D$ on $X$ with
\begin{equation}
\label{Mu}
Dh^{n-1}=\mu(\cM)=\mu(\cE)=\frac{-\varepsilon h^n}2.
\end{equation}
and of Sequence \eqref{seqExtensionCurve}.
 Tensoring it with $\cE^\vee\cong\cE(\varepsilon h)$ and taking its cohomology we obtain
\begin{equation}
\label{Simple}
h^0\big(\cE\otimes\cE^\vee\big)\le h^0\big(\cE(D+\varepsilon h)\big)+h^0\big(\cE\otimes\cI_{Z\vert X}(-D)\big).
\end{equation}

We first show that $h^0\big(\cE(D+\varepsilon h)\big)=0$. The cohomology of Sequence \eqref{seqExtensionCurve} tensored by $\cO_X(D+\varepsilon h)$ yields the exact sequence
\begin{align*}
0\longrightarrow H^0\big(\cO_X(2D+\varepsilon h)\big)&\longrightarrow H^0\big(\cE(D+\varepsilon h)\big)\\
&\longrightarrow H^0\big(\cI_{Z\vert X}\big)\mapright\partial H^1\big(\cO_X(2D+\varepsilon h)\big).
\end{align*}
Equality \eqref{Mu} implies $(2D+\varepsilon h)h^{n-1}=0$. 

If $h^0\big(\cO_X(2D+\varepsilon h)\big)\ne0$, then the Nakai--Moishezon criterion would return $2D+\varepsilon h=0$, because $\cO_X(h)$ is ample. If $\varepsilon =0$, hence $2D=2D+\varepsilon h=0$, i.e. $D=0$, because $\Pic(X)$ has no non--trivial $2$--torsion elements. The cohomology of Sequence \eqref{seqExtensionCurve} implies
$$
h^0\big(\cE\big)=(1-\varepsilon)h^0\big(\cE\big)\ge h^0\big(\cO_X(D)\big)=1,
$$
a contradiction. If $\varepsilon =1$, then  $h=-2D$, contradicting that $\cO_X(h)$ is not $2$--divisible. 
Thus
\begin{equation}
\label{VanPos}
h^0\big(\cO_X(2D+\varepsilon h)\big)=0,
\end{equation}
hence $H^0\big(\cE(D+\varepsilon h)\big)\subseteq\ker(\partial)\subseteq H^0\big(\cI_{Z\vert X}\big)$.

If $Z\ne\emptyset$, then  $h^0\big(\cI_{Z\vert X}\big)=0$. If $Z=\emptyset$, then $h^0\big(\cI_{Z\vert X}\big)=h^0\big(\cO_X\big)=1$. Since $\partial$ must be non--zero, because $\cE$ is indecomposable, it follows that $\ker(\partial)=0$. In both cases we finally infer the vanishing
\begin{equation}
\label{Van1}
h^0\big(\cE(D+\varepsilon h)\big)=0.
\end{equation}

We now prove $h^0\big(\cE\otimes\cI_{Z\vert X}(-D)\big)\le1$. The cohomology of Sequence \eqref{seqExtensionCurve} tensored by $\cO_X(-D)$ yields 
\begin{equation}
\label{Ideal}
\begin{aligned}
h^0\big(\cE\otimes\cI_{Z\vert X}(-D)\big)\le h^0\big(\cE(-D)\big)&\le h^0\big(\cO_X\big)+h^0\big(\cI_{Z\vert X}(-2D-\varepsilon h)\big)\\
&\le 1+h^0\big(\cO_X(-2D-\varepsilon h)\big).
\end{aligned}
\end{equation}
We have $(-2D-\varepsilon h)h^{n-1}=0$, hence as above we again deduce
\begin{equation}
\label{VanNeg}
h^0\big(\cO_X(-2D-\varepsilon h)\big)=0.
\end{equation}
By combining Inequalities \eqref{Simple} and \eqref{Ideal} with Equalities \eqref{Van1} and \eqref{VanNeg}, we finally obtain $h^0\big(\cE\otimes\cE^\vee\big)\le1$.
\end{proof}

The above proposition leads to the following proof of Theorem \ref{tSimple} stated in the Introduction.

\medbreak
\noindent{\it Proof of Theorem \ref{tSimple}.}
If $\cE$ is indecomposable, then the statement follows from Proposition \ref{pSimple}, because on a Fano threefold $X$ the fundamental line bundle $\cO_X(h)$ is not divisible by definition and $\Pic(X)$ is torsion--free (see \cite{I--P}).

Let $\cE$ be decomposable. Thus there is a divisor $D\subseteq X$ such that $\cE\cong\cO_{X}(D)\oplus\cO_{X}(-D-\varepsilon h)$: necessarily $Dh^2=-\varepsilon h^3/2$, otherwise $\cE$ would be unstable. Moreover, Equalities \eqref{VanishingDecomposable} follow from the vanishing $h^0\big(\cE\big)=0$ and the instantonic condition. 

Equalities \eqref{VanPos} and \eqref{VanNeg} imply that $\Hom_X\big(\cE,\cE\big)$ contains only scalar multiples of the identity.
\qed
\medbreak

Notice that  the existence of decomposable instanton bundles strongly depends on the choice of the Fano threefold $X$, as the following example shows.

\begin{example}
Decomposable instanton bundles exist on the flag threefold (see \cite[Proposition 2.5]{M--M--PL}): it is easy to check that that the same is true on $\p1\times\p1\times\p1$ (see \cite{A--M}). On the other hand, the blow up of $\p3$ at a point does not support any decomposable instanton bundle (see \cite[Proposition 6.5]{C--C--G--M}). 
\end{example}

If $\cE$ is an indecomposable instanton bundle on a Fano threefold $X$ we then have  $h^0\big(\cE\otimes\cE^\vee\big)=1$. Moreover, $h^3\big(\cE\otimes\cE^\vee\big)=\dim\Ext^3_{X}\big(\cE,\cE\big)=0$ thanks to Lemma \ref{lExt3}. Thus Equality \eqref{RRGeneral} for $\cE\otimes\cE^\vee$ yields
\begin{equation}
\label{Ext12}
h^1\big(\cE\otimes\cE^\vee\big)-h^2\big(\cE\otimes\cE^\vee\big)=2i_Xc_2(\cE) h-\frac{i_X\varepsilon\deg(X)}2-3.
\end{equation}

\section{Quantum numbers of non--ordinary instanton bundles}
\label{sSharp}
Before proving Theorem \ref{tSharp} we first briefly recall the classification of  Fano threefolds with $i_X\ge2$ and very ample $\cO_X(h)$ fixing the notation (see \cite{I--P} and the references therein).
\begin{itemize}
\item $X\cong\p3$. In this case $h^3=1$, $i_X=4$ and $\varrho_X=1$.
\item $X$ is a smooth quadric $Q\subseteq\p4$. In this case $h^3=2$, $i_X=3$ and  $\varrho_X=1$. 
\item $X$ is a smooth cubic in $F_3\subseteq\p4$. In this case $h^3=3$, $i_X=2$ and  $\varrho_X=1$. 
\item $X$ is the smooth complete intersection of two quadrics  in $F_4\subseteq\p5$. In this case $h^3=4$, $i_X=2$ and  $\varrho_X=1$.
\item $X$ is the smooth and proper intersection of the grassmannian  of lines in $\p4$, $F_5:=G(2,5)\subseteq\p9$, with a linear subspace in $\p9$ of dimension $3$. In this case $h^3=5$, $i_X=2$ and  $\varrho_X=1$.
\item $X$ is any smooth intersection $F_{6,2}$ of the image of the Segre embedding of $\p2\times\p2$ inside $\p8$ with a hyperplane $H$. In this case $h^3=6$, $i_X=2$ and $\varrho_X=2$. 
\item $X$ is the image $F_{6,3}$ of the Segre embedding of $\p1\times\p1\times\p1$ inside $\p7$. In this case $h^3=6$, $i_X=2$ and $\varrho_X=3$. 
\item $X$ is the blow up $F_7$ of $\p3$ at a point $p$ embedded in $\p8$ via the linear system induced by the quadrics through $p$. In this case $h^3=7$, $i_X=2$ and $\varrho_X=2$. 
\end{itemize}

Recall that we already introduced in the Introduction the notion of line on a Fano threefold and we denoted by $\Lambda(X)$ their Hilbert scheme. A conic $C\subseteq X$ is a curve with Hilbert polynomial $2t+1$ or, in other words, such that $\deg(C)=2$ and $p_a(C)=0$. From now on $\Gamma(X)$ will denote the Hilbert scheme of conics in $X$.

It is helpful to identify lines and conics on $X$. We exploit such an identification in the following remarks that will be used several times in the paper. In what follows $ H^*(X)$ denotes the integral cohomology ring of $X$.

\begin{remark}
\label{rR=1}
Let $\varrho_X=1$. 

If $X\cong\p3$, then $H^*(X)\cong\bZ[h]/(h^4)$, where $h$ is the class of a plane. The classes of lines and conics on $X$ are $\ell:=h^2$ and $2h^2$ respectively. By definition $\Lambda(X)$ is the Grassmannian $G(2,4)$ of lines in $\p3$. The scheme $\Gamma(X)$ is a projective bundle on $\vert h\vert$ with fibre isomorphic to the $\p5$ of conics in a plane.

If $X$ is the smooth quadric $Q\subseteq\p4$, then $H^*(X)\cong\bZ[h]/(h^2-2\ell,h^4)$, where $\cO_X(h):=\cO_{\p4}(1)\otimes\cO_X$ and $\ell$ is the class of a line. The class of a conic on $X$ is $h^2$: each conic in $X$ is the intersection of $X$ with a unique plane in $\p4$, hence $\Gamma(X)$ is the grassmannian $G(3,5)$ of planes in $\p4$. On $X$ there is also an indecomposable rank $2$ bundle $\cS$ called spinor bundle whose sections vanish exactly on the lines inside $X$. We have $c_1(\cS)=-h$, $c_2(\cS)=\ell$, $h^i\big(\cS(th)\big)=0$ for $t\in\bZ$ and $i=1,2$, $h^0\big(\cS\big)=0$, hence $\cS$ is $\mu$--stable. Moreover, $\cS$ fits into an exact sequence of the form
\begin{equation*}
0\longrightarrow\cS\longrightarrow\cO_X^{\oplus4}\longrightarrow\cS^\vee\longrightarrow0
\end{equation*}
(see \cite{Ott2} for all the details on spinor bundles on $X$). It follows that $\Lambda(X)\cong\p3$.

Let $\delta:=h^3\in\{\ 3,4,5\ \}$. Thus $X$ contains a line: if $\ell$ is its class in $H^4(X)$, then $h^2=\delta\ell$ and the class of a conic on $X$ is $2\ell$. In this case $\Lambda(X)$ and $\Gamma(X)$ are respectively a smooth surface and a smooth fourfold:  for more details see \cite[Propositions 2.2.10 and 2.3.8]{K--P--S} and the references therein.
\end{remark}

\begin{remark}
\label{rFlag}
Let $h^3=6$ and $\varrho_X=2$, i.e. $X=F_{6,2}$. 

If $\pi_i\colon F_{6,2}\to \p2$ denotes the projection to the $i^{\mathrm {th}}$ factor, then $\cO_{F_{6,2}}(h_i):=\pi_i^*\cO_{\p2}(1)$ is globally generated,  $h=h_1+h_2$ and
$$
H^*({F_{6,2}})\cong\frac{ \bZ[h_1,h_2]}{(h_1^2-h_1h_2+h_2^2,h_1^3,h_2^3)}.
$$
The fibres of $\pi_i$ are lines with class $h_i^2$. Conversely, let $\beta_1h_2^2+\beta_2h_1^2\in H^4({F_{6,2}})$ be the class of a line $L$.  Thus $\beta_1+\beta_2=hL=1$ and $\beta_i=Lh_i\ge0$, hence the class of $L$ is $h_i^2$ and $L$ is a fibre of $\pi_i$. In particular two distinct lines in the same class do not intersect.  We conclude that $\Lambda({F_{6,2}})$ has two components both isomorphic to $\p2$.

Now let $\beta_1h_2^2+\beta_2h_1^2\in H^4({F_{6,2}})$ be the class of a conic $C$, i.e. a curve of degree $2$ with $p_a(C)=0$. Thus $\beta_1+\beta_2=hC=2$ and $\beta_i=Ch_i\ge0$. We then infer that the class of $C$ is either $h_1h_2$ (coinciding with $h_1^2+h_2^2$ as cycles) or $2h_i^2$. 

Let us consider the latter case. If $C$ is reducible, then it is the union of two fibres of $\pi_i$ which are skew lines, hence $p_a(C)=-1$, a contradiction. Thus $C$ must be a double structure on a line $L$ with class $h_i^2$. Since $h_i^3=0$, it follows that each divisor in $\vert h_i\vert$ intersecting $L$ actually contains it, hence there is an exact sequence 
\begin{equation}
\label{seqLineFlag}
0\longrightarrow\cO_{F_{6,2}}(-2h_i)\longrightarrow\cO_{F_{6,2}}(-h_i)^{\oplus2}\longrightarrow\cI_{L\vert F_{6,2}}\longrightarrow0.
\end{equation}
We have $\cI_{L\vert F_{6,2}}^2\subseteq\cI_{C\vert F_{6,2}}\subseteq\cI_{L\vert F_{6,2}}$, hence a surjective morphism 
$$
\nu\colon \cN_{L\vert F_{6,2}}^\vee\cong\cI_{L\vert F_{6,2}}/\cI_{L\vert F_{6,2}}^2\longrightarrow \cI_{L\vert F_{6,2}}/\cI_{C\vert F_{6,2}}\cong\cI_{L\vert C}.
$$
On the one hand, Sequence \eqref{seqLineFlag} restricted to $\cO_L$ yields $\cN_{L\vert F_{6,2}}\cong\cO_{\p1}^{\oplus2}$. On the other hand, the latter sheaf is the kernel of the natural map $\cO_C\to\cO_L$, hence its Hilbert polynomial is $t$. Since $\cI_{L\vert F_{6,2}}^2\subseteq\cI_{C\vert F_{6,2}}$, it follows that $\cI_{L\vert C}$ is a sheaf of rank $1$ on $L$. We deduce that $\cI_{L\vert C}\cong\cO_{\p1}(-1)$, hence the morphism $\nu$ cannot exist. In particular, this case cannot occur.

Thus the class of $C$ is necessarily $h_1h_2$. If $C$ is integral, then the equalities $h_1h_2h_i=1$  and $h^0\big(\cO_{F_{6,2}}(h_i)\big)=3$ imply that $C$ is contained in the complete intersection of two divisors in $\vert h_1\vert$ and $\vert h_2\vert$, hence  it must coincide with it by degree reasons. This implies the existence of an exact sequence of the form
\begin{equation}
\label{seqFlag}
0\longrightarrow\cO_{F_{6,2}}(-h)\longrightarrow\cO_{F_{6,2}}(-h_1)\oplus\cO_{F_{6,2}}(-h_2)\longrightarrow\cI_{C\vert {F_{6,2}}}\longrightarrow0.
\end{equation}
If $C$ is not integral, then it must be the union of a fibre $L_1$ of $\pi_1$ and a fibre $L_2$ of $\pi_2$. In particular there  are pencils $\Sigma_i\subseteq\vert h_i\vert$ with base locus $L_i$. Thus we can find a divisor inside $\Sigma_1$ containing a second point in $L_2$ besides $L_1\cap L_2$, hence the whole $C$ because $h_1h_2^2=1$. Similarly there is a divisor in $\Sigma_2$ containing $C$ and we get the above sequence again.

Notice that the Cohomology of Sequence \eqref{seqFlag} tensored by $\cO_{F_{6,2}}(h_i)$ yields the uniqueness of the divisors in $\vert h_i\vert$ containing $C$. In particular, $\Gamma({F_{6,2}})$ is $\vert h_1\vert\times\vert h_2\vert\cong\p2\times\p2$.
\end{remark}

\begin{remark}
\label{rSegre}
Let $h^3=6$ and $\varrho_X=3$, i.e. $X=F_{6,3}$. 

If $\pi_i\colon {F_{6,3}}\to \p1$ denote the $i^{\mathrm {th}}$--projection, then $\cO_{F_{6,3}}(h_i):=\pi_i^*\cO_{\p1}(1)$ is globally generated,  $h=h_1+h_2+h_3$ and
$$
H^*({F_{6,3}})\cong\frac{ \bZ[h_1,h_2,h_3]}{(h_1^2,h_2^2,h_3^2)}.
$$
The fibres of $\pi_i$ are smooth quadrics with class $h_i$. Thus the fibres of $\pi_i\times\pi_j$ have class $h_ih_j$, hence they are lines. Conversely, let $\beta_1h_2h_3+\beta_2h_1h_3+\beta_3h_1h_2\in A^2({F_{6,3}})$ be the class of a line $L$.  Thus $\beta_1+\beta_2+\beta_3=hL=1$ and $\beta_i=Lh_i\ge0$, hence the class of $L$ is $h_ih_j$ and $L$ is a fibre of $\pi_i\times\pi_j$. In particular two lines in the same class do not intersect. In particular, $\Lambda({F_{6,3}})$ has three components isomorphic to $\p1\times\p1$.

Now let $\beta_1h_2h_3+\beta_2h_1h_3+\beta_3h_1h_2\in A^2({F_{6,3}})$ be the class of a conic $C$. Thus $\beta_1+\beta_2+\beta_3=hC=2$ and $\beta_i=Ch_i\ge0$. Up to permutations of the indices, we then infer that the class of $C$ is either $h_1(h_2+h_3)$, or $2h_1h_2$. As in Remark \ref{rFlag}, one can exclude the latter case and find an exact sequence of the form
\begin{equation}
\label{seqSegre}
0\longrightarrow\cO_{F_{6,3}}(-h)\longrightarrow\cO_{F_{6,3}}(-h_1)\oplus\cO_{F_{6,3}}(-h_2-h_3)\longrightarrow\cI_{C\vert {F_{6,3}}}\longrightarrow0.
\end{equation}

As in Remark \ref{rFlag} computing the Cohomology of Sequence \eqref{seqSegre} tensored by $\cO_{F_{6,3}}(h_1)$ and $\cO_{F_{6,3}}(h_2+h_3)$ one checks that $\Gamma({F_{6,3}})\cong\p1\times\p3$.
\end{remark}

\begin{remark}
\label{rBlow}
Let $h^3=7$, hence $X=F_{7}$. 

We have two natural projections, i.e. the blow up map $\sigma\colon {F_7}\to \p3$ and the map $\pi\colon {F_7}\to \p2$  induced by the projection from $p$ onto a plane $\p2$: we define $\cO_{F_7}(\xi):=\sigma^*\cO_{\p3}(1)$ and $\cO_{F_7}(f):=\pi^*\cO_{\p2}(1)$. Notice that both $\cO_{F_7}(\xi)$ and $\cO_{F_7}(f)$ are globally generated. The map $\pi$ induces an isomorphism ${F_7}\cong\bP(\cO_{\p2}\oplus\cO_{\p2}(1))$, $h=\xi+f$ and
$$
H^*({F_7})\cong\frac{ \bZ[\xi,f]}{(f^3,\xi^2-\xi f)}.
$$
The fibres of $\pi$ are lines with class $f^2$. Moreover, ${F_7}$ also contains the exceptional divisor $E\cong\p2$ of $\sigma$. We have $\{\ E\ \}=\vert \xi-f\vert$, hence $E$ is also embedded in $\p8$ as a plane, because $Eh^2=1$. Trivially each line inside $E$ is also a line in ${F_7}$ whose image via $\pi$ is a line. Conversely, arguing as in the previous remarks one deduces that all the lines on $X$ are of the above form. In particular, $\Lambda({F_7})$ has two components both isomorphic to $\p2$.

Now let $\beta_1\xi f+\beta_2f^2\in A^2({F_7})$ be the class of a conic $C$. Thus $2\beta_1+\beta_2=hC=2$, $\beta_1=Cf\ge0$ and $\beta_1+\beta_2=C\xi\ge0$. It is easy to check that the class of $C$ is $\xi f$, or $2(\xi-f)f$, or $f^2$ and one can exclude the latter case as in Remark \ref{rFlag}. Notice that in the first case $C$ does not intersect $E$, while in the second $C\subseteq E$. Again as in Remark \ref{rFlag} one obtains the existence of exact sequences 
\begin{equation}
\label{seqBlow1}
0\longrightarrow\cO_{F_7}(-h)\longrightarrow\cO_{F_7}(-\xi)\oplus\cO_{F_7}(-f)\longrightarrow\cI_{C\vert {F_7}}\longrightarrow0,
\end{equation}
if the class of $C$ is $\xi f$, and 
\begin{equation}
\label{seqBlow2}
0\longrightarrow\cO_{F_7}(-h)\longrightarrow\cO_{F_7}(-\xi+f)\oplus\cO_{F_7}(-2f)\longrightarrow\cI_{C\vert {F_7}}\longrightarrow0,
\end{equation}
if the class of $C$ is $2(\xi-f)f$.

Thus $\Gamma({F_7})$ is the union of two components. One of them consists of conics in $E\cong\p2$, hence it is isomorphic to $\p5$. The other one is given by the complete intersections of divisors in $\vert f\vert$ and $\vert \xi\vert$. As in Remark \ref{rFlag}, one deduces from the analysis of Sequences \eqref{seqBlow1} and \eqref{seqBlow2} that the divisor in $A\in\vert f\vert$ through $C$ is unique and that $h^0\big(\cO_{F_7}(\xi)\otimes\cO_A\big)=3$, hence the second component is a projective bundle over $\vert f\vert\cong\p2$ with fibre $\p2$. In particular it has dimension $4$.
\end{remark}

We finally prove the main theorem of the section.

\medbreak
\noindent{\it Proof of Theorem \ref{tSharp}.}
We know that $h^0\big(\cE\big)=0$ by definition. Moreover, if $\cE$ is non--ordinary, then $q_X^\varepsilon\ge1$. Table 1 will be sometimes helpful in what follows.
\begin{table}[H]
\label{TableInvariants}
\centering
\bgroup
\def\arraystretch{1.5}
\begin{tabular}{ccccccccc}
\cline{1-9}
\multicolumn{1}{|c|}{$i_X$} &\multicolumn{1}{c}{4} & \multicolumn{1}{c}{4} & \multicolumn{1}{c}{3} & \multicolumn{1}{c}{3} & \multicolumn{1}{c}{2} & \multicolumn{1}{c}{2}& \multicolumn{1}{c}{1}& \multicolumn{1}{c|}{1}\\ 
\cline{1-9}

\multicolumn{1}{|c|}{$\varepsilon$} &\multicolumn{1}{c}{0} & \multicolumn{1}{c}{1} & \multicolumn{1}{c}{0} & \multicolumn{1}{c}{1} & \multicolumn{1}{c}{0} & \multicolumn{1}{c}{1}& \multicolumn{1}{c}{0}& \multicolumn{1}{c|}{1}\\ 
\cline{1-9}

\multicolumn{1}{|c|}{$ i_X-\varepsilon$} &\multicolumn{1}{c}{4} & \multicolumn{1}{c}{3} & \multicolumn{1}{c}{3} & \multicolumn{1}{c}{2} & \multicolumn{1}{c}{2} & \multicolumn{1}{c}{1}& \multicolumn{1}{c}{1}& \multicolumn{1}{c|}{0}\\
\cline{1-9}

\multicolumn{1}{|c|}{$q_X^\varepsilon$}&\multicolumn{1}{c}{2} & \multicolumn{1}{c}{2} & \multicolumn{1}{c}{2} & \multicolumn{1}{c}{1} & \multicolumn{1}{c}{1} & \multicolumn{1}{c}{1}& \multicolumn{1}{c}{1}& \multicolumn{1}{c|}{0}\\ 
\cline{1-9}

\multicolumn{1}{|c|}{ordinary}&\multicolumn{1}{c}{yes} & \multicolumn{1}{c}{no} & \multicolumn{1}{c}{no} & \multicolumn{1}{c}{yes} & \multicolumn{1}{c}{yes} & \multicolumn{1}{c}{no}& \multicolumn{1}{c}{no}& \multicolumn{1}{c|}{yes}\\ 
\cline{1-9}

\end{tabular}
\egroup
\caption{Values of $i_X$, $\varepsilon$, $i_X-\varepsilon$, $q_X^\varepsilon$.}
\end{table}

We have $h^i\big(\cE\big)=h^{3-i}\big(\cE((\varepsilon-i_X)h)\big)$, thanks to Equality \eqref{SerreDual}. 
One  checks directly that $\varepsilon-i_X\le -q_X^\varepsilon\le0$, hence $h^3\big(\cE\big)=h^0\big(\cE((\varepsilon-i_X)h)\big)=0$. Lemma \ref{lH12} implies  $h^2\big(\cE\big)=0$ if $i_X-\varepsilon\ne0$, i.e. if $(\varepsilon,i_X)\ne(1,1)$.

Thanks to Equality \eqref{RRGeneral}, when $(\varepsilon,i_X)\ne(1,1)$, we then obtain
\begin{equation}
\label{LowerBound}
\varepsilon-2+\frac{i_X-\varepsilon}2hc_2(\cE)=-\chi(\cE)=h^1\big(\cE\big)\ge0.
\end{equation}
Thus $hc_2(\cE)$ must be even and positive when $i_X-\varepsilon$ is odd, i.e. when $\cE$ is non--ordinary. 
Moreover, in this case $i_X-\varepsilon\ne0$, hence Inequality \eqref{LowerBound} returns
\begin{equation*}
hc_2(\cE)\ge\left\lbrace\begin{array}{ll} 
2\quad&\text{if $i_X\ge2$,}\\
4\quad&\text{if $i_X=1$.}
\end{array}\right.
\end{equation*}

Thus, in order to complete the proof of the statement, we have only to check that there is no non--ordinary indecomposable instanton bundle $\cE$ such that $c_2(\cE)h=2$ when $i_X=2$ and $h^3\ge6$. If it exists, then $h^2\big(\cE(h)\big)=0$ thanks to Lemma \ref{lH12}. It follows that $h^0\big(\cE(h)\big)\ge\chi(\cE(h))=h^3$, thanks to Equality \eqref{RRGeneral}. Let $s\in H^0\big(\cE(h)\big)$ be a non--zero section. 

We first assume that the zero--locus  of the section $s$ is the union of a curve $Z$ and an effective divisor $D\subseteq X$. Thus, there is a non--zero section of $\cE(h-D)$ vanishing exactly along $Z$ whose class inside $A^2(X)$ is $c_2(\cE(h-D))=c_2(\cE)-hD+D^2$. We know that
$$
Dh^2-h^3=(D-h)h^2=\mu(\cO_X(D-h))\le \mu(\cE)=-\frac {h^3}2,
$$
because $\cE$ is $\mu$--semistable, thanks to Lemma \ref{lHoppe}. We deduce that
\begin{equation}
\label{BoundDivisor}
2Dh^2\le h^3.
\end{equation}

Let $X=F_{6,2}$. If $D\in\vert \alpha_1h_1+\alpha_2h_2\vert$, then $\alpha_1=Dh_2^2\ge0$, because $D$ is effective: similarly $\alpha_2\ge0$. Inequality \eqref{BoundDivisor} yields $\alpha_1+\alpha_2\le1$, hence without restricting we can then assume $D=h_1$. It then follows  $hc_2(\cE(h-D))=0$, hence $Z=\emptyset$ and the Koszul complex of $s$ becomes 
$$
0\longrightarrow\cO_{F_{6,2}}\longrightarrow\cE(h_2)\longrightarrow\cO_{F_{6,2}}(h_2-h_1)\longrightarrow0.
$$
Such an exact sequence splits, because $\Ext^1_{F_{6,2}}\big(\cO_{F_{6,2}}(h_2-h_1),\cO_{F_{6,2}}\big)=0$ (see \cite[Proposition 2.5]{C--Fa--M2}), hence we infer $\cE\cong\cO_{F_{6,2}}(-h_1)\oplus\cO_{F_{6,2}}(-h_2)$. 

Let $X={F_{6,3}}$. Arguing as in the previous case, we can still assume that $D=h_1$ and we still deduce $Z=\emptyset$. The cohomology of the Koszul complex of $s$ and the K\"unneth formulas imply $h^0\big(\cE(h_1)\big)\ne0$, contradicting the $\mu$--semistability of $\cE$, hence this case cannot occur.

Let $X=F_7$. If $D\in\vert\alpha_1\xi+\alpha_2 f\vert$, then $\alpha_1,\alpha_1+\alpha_2\ge0$, because $D$ is effective, and Inequality \eqref{BoundDivisor} yields $8\alpha_1+6\alpha_2\le7$. It follows that either $D=f$ or $D=\xi-f$ and we can easily deduce as in the previous case that such a case cannot occur too.

We now assume that the zero--locus of $s$ is a curve $Z$. The degree of $Z$ is $2$: the Koszul complex of $s$ and the inequality $h^0\big(\cE(h)\big)\ge h^3$ imply that $Z$ is contained in a plane. We deduce that $Z$ is a possibly non--integral conic, hence $\omega_Z\cong\cO_{X}(-h)\otimes\cO_Z$. In what follows we will again make a case by case analysis.

If  $X={F_{6,2}}$, then we have Sequence \eqref{seqFlag}. Recall that we also have  the Kosxul complex of $s$, i.e.
$$
0\longrightarrow\cO_{F_{6,2}}\longrightarrow\cE(h)\longrightarrow\cI_{Z\vert {F_{6,2}}}(h)\longrightarrow0.
$$
Equality \eqref{SerreDual} and the cohomology of Sequence \eqref{seqStandard}
tensored by $\cO_{F_{6,2}}(-h)$ yield
\begin{align*}
\Ext^1_{F_{6,2}}\big(\cI_{Z\vert {F_{6,2}}}(h),\cO_{F_{6,2}})^\vee&\cong \Ext^2_{F_{6,2}}\big(\cO_{F_{6,2}},\cI_{Z\vert {F_{6,2}}}(-h)\big)\\
&\cong H^1\big(\cO_{F_{6,2}}(-h)\otimes\cO_Z\big)\cong H^0\big(\cO_Z\big)\cong\bC.
\end{align*}
Taking account of the equality $h^1\big(\cO_{F_{6,2}}(-h)\big)=0$, Theorem \ref{tSerre} and Sequence \ref{seqFlag}, we deduce that $\cE\cong\cO_{F_{6,2}}(-h_1)\oplus\cO_{F_{6,2}}(-h_2)$ taking into account Sequence \ref{seqFlag}, because there is only one non--trivial extension of $\cI_{Z\vert {F_{6,2}}}(h)$ by $\cO_X$.

We exclude the two remaining cases again using the $\mu$--semistability of $\cE$. 
If $X={F_{6,3}}$ and $Z$ is the complete intersection of divisors in $\vert h_1\vert$ and $\vert h_2+h_3\vert$, the cohomology of Sequence \eqref{seqSegre} tensored by $\cO_{F_{6,3}}(h_1)$ yields
$h^0\big(\cE(h_1)\big)=h^0\big(\cI_{Z\vert {F_{6,3}}}(h_1)\big)\ne0$.
Since $\mu(\cO_{F_{6,3}}(-h_1))=-2>-3=\mu(\cE)$, it follows from Lemma \ref{lHoppe} that this case does not occur. 

A similar argument can be applied also to the case $h^3=7$, i.e. $X:={F_7}$, by computing the cohomology of Sequences \eqref{seqBlow1} and \eqref{seqBlow2} respectively tensored by $\cO_{F_7}(f)$ and $\cO_{F_7}(\xi-f)$.
\qed
\medbreak

\begin{remark}
\label{rSplit}
As shown in the above proof, when $i_X=2$ and $\varrho_X\ge2$ the only  instanton bundle with quantum number $\quantum=2$ is  $\cO_{F_{6,2}}(-h_1)\oplus\cO_{F_{6,2}}(-h_2)$.
\end{remark}

\section{Non--ordinary instanton bundles when $i_X\ge2$}
\label{sExistence}
The existence of ordinary instanton bundles on a Fano threefold $X$ with very ample $\cO_X(h)$ has been the object of several papers. More precisely ordinary instanton bundles always exist when $i_X\ge2$ and in several cases when $i_X=1$ (see the introduction and the references therein for further details). 

In this section we will show that $X$ also supports non--ordinary instanton bundles when $i_X\ge2$. The first step in this direction is the following result.

\begin{lemma}
Let $X$ be a Fano threefold with $i_X\ge2$ and very ample $\cO_X(h)$.

For each non--negative integer $s$, there is a curve $Z\subseteq X$ whose connected components are  $s$ pairwise disjoint integral conics.
\end{lemma}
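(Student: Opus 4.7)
The plan is to prove the statement by induction on $s$. The base case $s=0$ is trivial (take $Z=\emptyset$). For the inductive step, assume $s-1$ pairwise disjoint integral conics $C_1,\dots,C_{s-1}$ on $X$ have been produced, and let $Z_{s-1}$ be their union; the goal is to find an integral conic $C_s$ disjoint from $Z_{s-1}$.

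First I would single out, case by case in the classification of Fano threefolds with $i_X\ge 2$, an irreducible component $\Gamma_0\subseteq\Gamma(X)$ whose general point parameterises an integral conic and such that the conics of $\Gamma_0$ cover $X$. The descriptions of $\Gamma(X)$ provided in Remarks \ref{rR=1}, \ref{rFlag}, \ref{rSegre} and \ref{rBlow} ensure that such a $\Gamma_0$ exists in each case, with $\dim\Gamma_0\ge 4$ (for example $\dim 8$ for $\p3$, $\dim 6$ for $Q$, $\dim 4$ for $F_3,F_4,F_5,F_{6,2},F_{6,3}$). The only non--routine point occurs for $X=F_7$, where the $5$--dimensional component of $\Gamma(F_7)$ parameterises conics contained in the exceptional divisor $E\cong\p2$ and therefore fails to cover $X$; one must select instead the $4$--dimensional component consisting of complete intersections of divisors in $\vert\xi\vert$ and $\vert f\vert$.

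Next I would apply a standard incidence--variety dimension count. Setting
$$
I:=\{(C,x)\in\Gamma_0\times X\mid x\in C\}
$$
with projections $\pi_1\colon I\to\Gamma_0$ and $\pi_2\colon I\to X$, the fibres of $\pi_1$ are the conics themselves, so $\dim I=\dim\Gamma_0+1$; the surjectivity of $\pi_2$ forces its general fibre to have dimension $\dim\Gamma_0-2$. Choosing the previous conics $C_i$ generically in $\Gamma_0$ (which we may arrange inductively, since the conclusion at each step is open) places $Z_{s-1}$ in general position with respect to $\pi_2$, and hence
$$
\dim\pi_1\bigl(\pi_2^{-1}(Z_{s-1})\bigr)\le \dim Z_{s-1}+\dim\Gamma_0-2=\dim\Gamma_0-1.
$$
Thus the locus of conics of $\Gamma_0$ meeting $Z_{s-1}$ is a proper closed subset of $\Gamma_0$; intersecting its open complement with the open dense locus of integral conics in $\Gamma_0$ produces a non--empty open set, and any element of it furnishes the required $C_s$.

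The main obstacle is the first step: the concrete case--by--case check that some component $\Gamma_0$ of $\Gamma(X)$ has an integral generic member and sweeps out all of $X$. This is routine in view of the explicit descriptions of $\Gamma(X)$ recalled in the quoted remarks, but it does require going through each of the eight families in the classification of Fano threefolds with $i_X\ge 2$ and very ample $\cO_X(h)$; once it is settled, the dimension count of the second step applies uniformly and concludes the induction.
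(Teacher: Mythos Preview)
Your argument is correct, but it follows a genuinely different route from the paper's. The paper dispatches $i_X\ge 3$ as trivial and, for $i_X=2$, works on a single general hyperplane section $H\subseteq X$, which is a del Pezzo surface obtained by blowing up $r=9-h^3$ points of $\p2$. On $H$ the pencil $\vert\ell-e_1\vert$ has self--intersection $(\ell-e_1)^2=0$ and its general member is an integral curve of degree $(\ell-e_1)(3\ell-\sum e_i)=2$ and arithmetic genus $0$, i.e.\ an integral conic in $X$; any $s$ irreducible members of this pencil are therefore pairwise disjoint integral conics. This produces all $s$ conics at once, uniformly across the whole range $3\le h^3\le7$, with no induction, no incidence correspondence and no case analysis.

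Your approach trades this explicitness for a general--position argument. It is valid, but two small points deserve attention. First, for $X=F_7$ the conics of class $\xi f$ satisfy $\xi f\cdot(\xi-f)=0$, hence miss $E$; so $\pi_2$ is only dominant onto $X\setminus E$, not surjective onto $X$. This is harmless for your induction (all the $C_i$ you build lie in $X\setminus E$), but the word ``surjective'' should be weakened. Second, the bound $\dim\pi_2^{-1}(Z_{s-1})\le\dim\Gamma_0-1$ implicitly uses that no point of $Z_{s-1}$ lies in a $\pi_2$--fibre of dimension $\dim\Gamma_0$, i.e.\ that $\Gamma_0$ has no base point; this is true in every case but is part of the ``routine'' check you defer. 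The del Pezzo argument in the paper sidesteps both issues entirely.
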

\begin{proof}
The statement is trivial if $i_X\ge3$, hence we only need to examine the case $i_X=2$ in what follows. Let $d:=h^3$ be the degree of $X$: we know that $3\le d\le 7$. 

In this case the general hyperplane section $H$ of $X$ is a del Pezzo surface obtained by blowing up $r=9-d$ points in general position on $\p2$. Thus $\Pic(H)$ is generated by the pull--back $\ell$ of the class of a general line in $\p2$ and by the classes $e_1,\dots ,e_r$ of the exceptional divisors. Moreover, $\ell^2=-e_i^2=1$, $\ell e_i=e_ie_j=0$ for $1\le i< j\le r$. 

Since the class of the hyperplane section of $H$ is $3\ell-\sum_{k=1}^re_k$, it follows that that every element in $\vert \ell-e_1\vert$ is a conic. Notice that two such conics do not intersect.
\end{proof}

Now let us fix a Fano threefold $X$ with $i_X\ge2$ and consider $\varepsilon\in\{\ 0,1\ \}$ such that $i_X-\varepsilon$ is odd. The following construction extends \cite[Example 3.1.2]{Ha4} to all Fano threefolds with $i_X\ge2$.

\begin{construction}
\label{conInstanton}
Let $X$ be a Fano threefold  with very ample $\cO_X(h)$.
For each even integer $\quantum\ge2$, we consider the union $Z$ of $s:=(\quantum+2q_X^\varepsilon-2)/2$ pairwise disjoint integral conics on $X$. The adjunction formula on $X$ and the definition of $Z$ imply
$\det(\cN_{Z\vert X})\cong\cO_X((i_X-1)h)\otimes\cO_Z$.
Since $h^i\big(\cO_X(th)\big)=0$ for $i=1,2$ and each $t\in\bZ$, it follows the existence of an exact sequence 
\begin{equation}
\label{seqF}
0\longrightarrow\cO_X\longrightarrow\cF\longrightarrow\cI_{Z\vert X}((i_X-1)h)\longrightarrow0,
\end{equation}
thanks to Theorem \ref{tSerre}. The definition of $q_X^\varepsilon$ implies $i_X-1=2(q_X^\varepsilon-1)+\varepsilon$, hence the above sequence yields
\begin{equation}
\label{seqInstanton}
0\longrightarrow\cO_X((1-q_X^\varepsilon-\varepsilon)h)\longrightarrow\cE\longrightarrow\cI_{Z\vert X}((q_X^\varepsilon-1)h)\longrightarrow0.
\end{equation}
\end{construction}

On the one hand, if $i_X=1$, then the above construction does not lead to any instanton bundle $\cE$ for every value of $\quantum$, because $h^0\big(\cE\big)=1$ in this case. On the other hand, in what follows we will show that the construction above leads to non--ordinary instanton bundles for all the admissible values of the quantum number $\quantum$ given in Bound \eqref{BoundSharp} when $i_X\ge2$. 

We are now ready to prove Theorem \ref{tExistence}. We first start by proving that all the admissible values of the quantum number $\quantum$ can be obtained when $\varrho_X=1$.

\medbreak
\noindent{\it Proof of Theorem \ref{tExistence} when $\varrho_X=1$.}
Since $\varrho_X=1$ we know that each divisor $D$ on $X$ is linearly equivalent to $dh$. Moreover, such a $D$ is effective if and only if $d\ge1$.

It is immediate from Construction \ref{conInstanton} that $c_1(\cE)=-\varepsilon h$ and $hc_2(\cE)=\quantum$. The cohomology of Sequence \eqref{seqInstanton} implies 
$h^0\big(\cE\big)\le h^0\big(\cI_{Z\vert X}((q_X^\varepsilon-1)h)\big)$.
The above vanishing is trivial if $q_X^\varepsilon=1$. Thus we have to deal with $\p3$ and the smooth quadric $Q\subseteq\p4$: in this case $q_X^\varepsilon=2$. 

In both cases $Z$ is the union of $s=(\quantum+2)/2\ge2$ pairwise disjoint conics. Thus, if $X\cong\p3$, then $Z$ is not contained in any plane, hence $h^0\big(\cI_{Z\vert X}(h)\big)=0$. Assume $X\cong Q$ and let $h^0\big(\cI_{Z\vert X}(h)\big)\ne0$. Each hyperplane $H\subseteq\p4$ intersects $Q$ along a quadric $Q_H\subseteq H\cong\p3$ which has rank at least $3$, hence it is integral as well. It is easy to check that two conics on an integral quadric in $\p3$ always intersect. It follows that a $h^0\big(\cI_{Z\vert X}(h)\big)=0$ necessarily. Consequently, $h^0\big(\cE\big)=0$ also when $X$ is either $\p3$ or $Q$.

Let $D$ be a divisor on $X$ such that $\mu(\cO_X(D))\ge\mu(\cE)$. If $dh$ is its class, then such a condition forces $d\ge-\varepsilon/2\ge-1/2$, hence $d\ge0$ necessarily. Thus $h^0\big(\cE(-D)\big)\le h^0\big(\cE\big)=0$. Thanks to Lemma \ref{lHoppe}, we deduce that $\cE$ is $\mu$--stable. 
\qed
\medbreak

We now deal with the case $\varrho_X\ge2$ distinguishing two cases.

\medbreak
\noindent{\it Proof of Theorem \ref{tExistence} when $\varrho_X\ge2$.}
The proof is similar to the previous one. We have only to put some more care in showing that $\cE$ is $\mu$--(semi)stable.

We have three distinct cases to deal with. The threefold $X$ can be $F_{6,2}\subseteq\p8$, or $F_{6,3}\subseteq\p7$, or $F_7\subseteq\p8$.

Let $X:= F_{6,2}$. Recall that conics on ${F_{6,2}}$ were described in Remark \ref{rFlag}, where we showed that $\Gamma({F_{6,2}})\cong\vert h_1\vert\times\vert h_2\vert$. Thus two distinct components of $Z$ cannot be contained in the same divisor in either $\vert h_1\vert$ or $\vert h_2\vert$.

We show that $\cE$ is $\mu$--(semi)stable using Lemma \ref{lHoppe} and \cite[Proposition 2.5]{C--Fa--M2}. To this purpose let $\cO_{F_{6,2}}(D)\cong\cO_{F_{6,2}}(\alpha_1h_1+\alpha_2 h_2)$: we have $\mu(\cO_{F_{6,2}}(D))\ge\mu(\cE)$ if and only if $\alpha_1+\alpha_2\ge-1$. Thus either both $\alpha_i$'s are non--negative, or one of them is strictly negative and the other one non--negative. In both cases $h^0\big(\cO_{F_{6,2}}(-h-D)\big)=0$ thanks to \cite[Proposition 2.5]{C--Fa--M2}. The cohomology of Sequence \eqref{seqInstanton} yields 
\begin{equation}
\label{BoundStable}
h^0\big(\cE(-D)\big)\le h^0\big(\cI_{Z\vert{F_{6,2}}}(-D)\big).
\end{equation}
If $\alpha_i\ge1$ for some $i$, then the dimension on the right--hand side of Inequality \eqref{BoundStable} vanishes. Thus we must deal with the case $\alpha_1,\alpha_2\le0$. If equality holds, then again the dimension on the right vanishes. In particular $\cE$ is certainly at least $\mu$--semistable. It then remains to deal with the case $-D=h_i$ for some $i$. In this case \cite[Proposition 2.5]{C--Fa--M2} yields $h^1\big(\cO_{F_{6,2}}(-h+h_i)\big)=0$, hence $h^0\big(\cE(h_i)\big)= h^0\big(\cI_{Z\vert {F_{6,2}}}(h_i)\big)$. Since  $k\ge4$, it follows that $Z$ has at least two connected components: as we showed above such components cannot be contained in the same divisor in $\vert h_i\vert$, hence $h^0\big(\cI_{Z\vert {F_{6,2}}}(h_i)\big)=0$. It follows that $\cE$ is $\mu$--stable in this range, hence it is also indecomposable.

Let $X:=F_{6,3}$. Recall that each irreducible component in $Z$ is the complete intersection of divisors in $\vert h_i\vert$ and $\vert h-h_i\vert$ and that two distinct components of $Z$ cannot be contained in the same divisors in either $\vert h_i\vert$ or $\vert h-h_i\vert$. We will deal with the $\mu$--semistability of $\cE$ as in the previous case using Lemma \ref{lHoppe} and K\"unneth formulas. In this case if $\cO_{F_{6,3}}(D)\cong\cO_{F_{6,3}}(\alpha_1h_1+\alpha_2 h_2+\alpha_3h_3)$ we have $\mu(\cO_{F_{6,3}}(D))\ge\mu(\cE)$ if and only if  $\alpha_1+\alpha_2+\alpha_3\ge-3/2$. Thus either all the $\alpha_i$'s are non--negative, or one of them is strictly negative and one non--negative. In both cases $h^0\big(\cO_{F_{6,3}}(-h-D)\big)=0$, hence we can use the same argument used for proving the $\mu$--stability in the previous case. 

Finally we examine the $\mu$--stability of $\cE$ when $X:=F_7$ using Lemma \ref{lHoppe} and \cite[Proposition 2.7]{C--Fi--M}. The argument is essentially the same used before, but we need some more care because the geometry of ${F_7}$ is slightly richer. Indeed in this case $\Gamma({F_7})$ is the union of the fourfold of conics which are complete intersection of divisors in $\vert \xi\vert$ and $\vert f\vert$, with the $\p5$ of conics inside $E$, as we showed in Remark \ref{rBlow}.  Anyhow there are still no divisors in $\vert f\vert$ containing more than one component of $Z$. In this case if $\cO_{F_7}(D)\cong\cO_{F_7}(\alpha_1\xi+\alpha_2f)$, then $\mu(\cO_{F_7}(D))\ge\mu(\cE)$ if and only if
\begin{equation}
\label{BoundBlowUp}
4\alpha_1+3\alpha_2\ge-7/2.
\end{equation}
The cohomology of Sequence \eqref{seqInstanton} yields 
\begin{align*}
h^0\big(\cE(-\alpha_1\xi-\alpha_2f)\big)&\le h^0\big(\cO_{{F_7}}(-(\alpha_1+1)\xi-(\alpha_2+1)f)\big)\\
&+h^0\big(\cI_{Z\vert {F_7}}(-\alpha_1\xi-\alpha_2f)\big).
\end{align*}
If $\alpha_1\ge1$ the last member is zero, because $-\alpha_1=(-\alpha_1\xi-\alpha_2f)f^2$. Thus we have to deal with the case $\alpha_1\le0$. In this case Inequality \eqref{BoundBlowUp} yields
$$
-(\alpha_1+1)-(\alpha_2+1)\le -\alpha_1+\frac{8\alpha_1+7}6-2=\frac{2\alpha_1-5}6<0
$$
hence $h^0\big(\cO_{{F_7}}(-(\alpha_1+1)\xi-(\alpha_2+1)f)\big)=0$. Moreover,
$$
-\alpha_1-\alpha_2\le -\alpha_1+\frac{8\alpha_1+7}6=\frac{2\alpha_1+7}6.
$$
If $-\alpha_1-\alpha_2\le-1$, then $h^0\big(\cI_{Z\vert {F_7}}(-\alpha_1\xi-\alpha_2f)\big)\le h^0\big(\cO_{{F_7}}(-\alpha_1\xi-\alpha_2f)\big)=0$. 
Thus we have to check the cases $-\alpha_1-\alpha_2=1$ and $-\alpha_1-\alpha_2=0$. 

In the former case $\alpha_2=-1-\alpha_1$, hence Inequality \eqref{BoundBlowUp} returns $2\alpha_1\ge-1$. The hypothesis $\alpha_1\le0$ then forces $\alpha_1=0$ and we have to check if $h^0\big(\cI_{Z\vert {F_7}}(f)\big)=0$. In the latter case $\alpha_2=-\alpha_1$ and we have to check if $h^0\big(\cI_{Z\vert {F_7}}(-\alpha_1\xi+\alpha_1f)\big)=0$. This is true if $\alpha_1=0$, hence we consider the case $\alpha_1\le -1$ in what follows. In this case $h^0\big(\cO_{{F_7}}(-\alpha_1\xi+\alpha_1f)\big)=1$, hence the linear system $\vert -\alpha_1\xi+\alpha_1f\vert$ only contains the divisor $-\alpha_1E$. Since we are assuming $\quantum\ge4$, then $Z$ has at least two connected components, which cannot be contained simultaneously in either a divisor in $\vert f\vert$ or in $E$, as explained above. Thus the proof of the $\mu$--stability of $\cE$ is complete.
\qed
\medbreak

\section{Non--ordinary instanton bundles on ordinary prime Fano threefolds}
\label{sPrime}
In this section we will give some results on instanton bundles on an ordinary prime Fano threefold $X$, proving Theorem \ref{tPrime}. We will also assume that $\cO_X(h)$ is very ample. 

In this case $H^2(X)$ and $H^4(X)$ are respectively generated by the hyperplane class $h$ and by the class $\ell$ of a line. Equality \eqref{RRGeneral} returns 
$$
\chi(\cO_X(h))=\frac{h^3}2+3.
$$
Thus $\deg(X)=h^3$ is an even integer. The number $g_X:=\chi(\cO_X(h))-2$ is usually called the {\sl genus of $X$}. If $\varrho_X=1$, then $3\le g_X\le 12$ and $g_X\ne11$. 

In this case the irreducible components of $\Lambda(X)$ and $\Gamma(X)$ have dimensions $1$ and $2$ respectively (see \cite[Lemmas 2.2.3 and 2.3.4]{K--P--S}). Moreover, there is component $\Gamma(X)_0\subseteq \Gamma(X)$ whose general point is an integral conic $Z$ by \cite[Theorem 4.4]{Isk2}. The conics in $\Gamma(X)_0$ cover $X$ by \cite[Lemma 2.3.4]{K--P--S}, hence the normal bundle of $Z$ satisfies $\cN_{Z\vert X}\cong\cO_\p1^{\oplus2}$ by \cite[Proposition  4.3]{Isk2}.

\begin{lemma}
\label{lConic}
Let $X$ be an ordinary prime Fano threefold  with very ample $\cO_X(h)$.

If $Z\in \Gamma(X)_0$ is general, then the general conic in $\Gamma(X)_0$ and the general line in each component of $\Lambda (X)$ do not intersect $Z$.
\end{lemma}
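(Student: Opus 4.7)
My plan is a dimension count on suitable incidence correspondences, handling the two statements in parallel via universal families. Let $\mathcal{C}_0 \subseteq \Gamma(X)_0 \times X$ be the universal conic, a variety of dimension $3$ since it is a $\p1$-bundle (away from degenerate fibers) over the surface $\Gamma(X)_0$; and let $\mathcal{L}_0 \subseteq \Lambda(X)_0 \times X$ be the universal line over a fixed component $\Lambda(X)_0$, which has dimension $2$ because $\dim \Lambda(X)_0 = 1$. Write $\pi$ and $\mathrm{ev}$ for the projections to the base and to $X$, respectively, in both cases. Since the conics of $\Gamma(X)_0$ cover $X$ by \cite[Lemma 2.3.4]{K--P--S}, the evaluation $\mathrm{ev} \colon \mathcal{C}_0 \to X$ is surjective between threefolds, hence generically finite; in particular only finitely many conics of $\Gamma(X)_0$ pass through a general point of $X$.

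For the first claim, I would consider the triple incidence
\[
K' := \{(Z,C,p) \in \Gamma(X)_0 \times \Gamma(X)_0 \times X \colon p \in Z \cap C\}.
\]
Projecting to $X$, the fiber over a general point is a product of two finite sets, so $\dim K' \le 3$. Off the diagonal, the map $K' \to \Gamma(X)_0 \times \Gamma(X)_0$ is generically finite, because two distinct conics meet in finitely many points; therefore its image has dimension at most $3$, strictly less than $\dim(\Gamma(X)_0 \times \Gamma(X)_0) = 4$. The projection of this image to the first factor is dominant, so for a general $Z$ the fiber has dimension at most $1$, a proper closed subset of the surface $\Gamma(X)_0$, which proves that the general $C$ does not meet $Z$.

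For the second claim, I would first argue that $S_0 := \mathrm{ev}(\mathcal{L}_0) \subseteq X$ is a surface: if instead $\dim S_0 \le 1$, then a $1$-parameter family of distinct lines would all be forced to lie inside the single irreducible curve $S_0$, which is impossible. Consequently $\mathrm{ev} \colon \mathcal{L}_0 \to S_0$ is generically finite. In the triple incidence
\[
K := \{(Z,L,p) \in \Gamma(X)_0 \times \Lambda(X)_0 \times X \colon p \in Z \cap L\},
\]
the fiber over $p \notin S_0$ is empty, while the fiber over a general $p \in S_0$ is a product of finite sets; hence $\dim K \le \dim S_0 = 2$. The image of $K$ in the $3$-dimensional $\Gamma(X)_0 \times \Lambda(X)_0$ thus has codimension at least one, and for a general $Z$ the fiber, namely the locus of lines in $\Lambda(X)_0$ meeting $Z$, is at most $0$-dimensional, hence a proper subset of the $1$-dimensional $\Lambda(X)_0$. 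The general line in $\Lambda(X)_0$ therefore avoids $Z$.

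The main delicate point will be establishing $\dim S_0 = 2$; routing the count through the fiber over $X$ then neatly avoids having to verify genericity of $Z$ relative to the ramification locus of $\mathrm{ev}$.
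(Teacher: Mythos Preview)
Your incidence–correspondence approach is sound in outline and genuinely different from the paper's argument. The paper handles the conic case by a direct citation of \cite[Proof of Step 1 of the proof of Theorem 4.1]{B--F1}, and treats the line case by a contradiction argument: assuming every conic of $\Gamma(X)_0$ meets every line of a component $\Lambda(X)_i$, it shows the swept surface $R_i=\bigcup_{L\in\Lambda(X)_i}L$ would be a cone whose vertex lies on every general conic, contradicting the conic statement just established. Your dimension count, by contrast, gives a self-contained treatment of both halves simultaneously.

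There is, however, a genuine gap. The inferences ``the fiber over a general point is finite, so $\dim K'\le 3$'' and ``hence $\dim K\le 2$'' are not valid as stated: controlling only the generic fibre of $K'\to X$ (or $K\to S_0$) bounds the dimension of the components that dominate the base, but says nothing about components lying over the locus where $\mathrm{ev}$ fails to be finite. Concretely, if some point $p\in X$ carried a $2$--dimensional family of conics (i.e.\ every conic of $\Gamma(X)_0$ passed through $p$), then $K'$ would acquire a $4$--dimensional component over $p$ surjecting onto $\Gamma(X)_0\times\Gamma(X)_0$, and your argument would collapse. You need to exclude this. One clean way: since $\mathrm{ev}\colon\mathcal{C}_0\to X$ is generically finite between irreducible threefolds, the preimage of the jumping locus $B:=\{x:\dim\mathrm{ev}^{-1}(x)\ge 1\}$ has dimension $\le 2$, whence $\dim B\le 1$; and a point with a $2$--dimensional family of conics is impossible because the general conic $Z$ has $\cN_{Z\vert X}\cong\cO_{\p1}^{\oplus2}$, so $H^0(\cN_{Z\vert X}(-p))=0$ and no nontrivial first--order deformation of $Z$ can fix $p$. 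The analogous bound $\dim B_L\le 0$ for the line family follows the same way from $\dim\mathcal{L}_0=2$. With these two facts in hand, a case split over $B$ and $B_L$ shows every stratum of $K'$ (resp.\ $K$) has dimension $\le 3$ (resp.\ $\le 2$), and your argument then goes through. This, rather than $\dim S_0=2$, is the delicate point.
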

\begin{proof}
There exists a conic $C$ not intersecting $Z$: see \cite[Proof of Step 1 of the proof of Theorem 4.1]{B--F1} where it suffices to replace $\mathcal H^0_2(X)$ with $\Gamma(X)_0$. Thus the same is true for the general conic in $\Gamma(X)_0$.

Let us now prove that there is a also a line in each component of $\Lambda(X)_i\subseteq\Lambda(X)$ not intersecting $Z$. Assume that all the conics in $\Gamma(X)_0$ intersect each line in $\Lambda(X)_i$. The scheme $R_i:=\bigcup_{L\in \Lambda(X)_i}L\subseteq X$ is an irreducible surface. If $Z\in\Gamma(X)_0$ is general, then $Z\cap R_i$ is a finite set of points, because $X$ is dominated by $\Gamma(X)_0$. Since the surfaces $R_i$ are irreducible, it follows that we can find a point $v_i\in Z\cap R_i$ lying on all the lines in $R_i$. Thus each $R_i$ is a cone with vertex $v_i$ and the general  conic in $\Gamma(X)_0$ passes necessarily through all such points, hence it intersects $Z$, contradicting what we proved above.
\end{proof}

In what follows we want to construct instanton bundles on each prime Fano threefold. On the one hand, the only line bundle we can actually use in Serre construction for obtaining instanton bundles is $\cO_X$. Thus the only admissible construction by means of Serre correspondence is actually Construction \ref{conInstanton}. On the other hand, such a construction does not return instanton bundles $\cE$ because in this case $h^0\big(\cE\big)=1$, as pointed out in the previous section. For this reason we need a different and smarter approach for prime Fano threefolds.  

The existence of minimal instanton bundles on a Fano threefold $X$ follows from \cite[Theorem 4.1]{B--F1}. Thanks to an easy induction on the quantum number, it is possible to prove the existence of even instanton bundles on each prime Fano threefold for every admissible quantum number.

We will prove the existence of instanton bundles by induction on $\quantum$. The proof of the inductive step essentially coincides with few differences with the proof of \cite[Theorem 4.1]{B--F1} which suffices to prove the existence of minimal instanton bundles. Anyhow, in order to start with our induction we need some more properties of such minimal bundles. The proofs of these properties can be found in \cite{B--F1}, but they are a little hidden in the proofs and not explicitly indicated in the statements. Thus, for reader's benefit we explain how to recover them in what follows and how to adapt the proof in \cite{B--F1} to get the inductive step. 

To this purpose we introduce a new construction.

\begin{construction}
\label{conPrime}
Let $X$ be a prime Fano threefold  with very ample $\cO_X(h)$. 

If $\quantum=2$, let $\cE_2$ be the bundle $\cF$ obtained via Construction \ref{conInstanton} from a general  $Z\in\Gamma(X)_0$ and $\eta\colon \cO_X\to\cE_{2}$ the corresponding inclusion map. By construction the bundle $\cE_2$ is strictly $\mu$--semistable: moreover, restricting Sequence \eqref{seqF} to a general integral conic $Y\in\Gamma(X)_0$, we deduce that $\cE_2\otimes\cO_Y\cong\cO_\p1^{\oplus2}$.

If $\quantum\ge4$ is even, let $\cE_\quantum$ be a $\mu$--stable even instanton bundle with quantum number $\quantum$, such that $\cE_\quantum\otimes\cO_{Y}\cong\cO_\p1^{\oplus2}$ for  the general $Y\in\Gamma(X)_0$ and $\Ext_X^2\big(\cE_\quantum,\cE_\quantum\big)=0$. 

Thus, if $C\in\Gamma(X)_0$ is general, then the general $\varphi\colon \cE_\quantum\to\cO_C$ is surjective. We define $\cE_{\quantum,\varphi}:=\ker(\varphi)$.  By construction there is the exact sequence
\begin{equation}
\label{seqDeform}
0\longrightarrow\cE_{\quantum,\varphi}\longrightarrow\cE_\quantum\longrightarrow\cO_C\longrightarrow0.
\end{equation}
\end{construction}

While the existence of $\cE_2$ is clear, the existence of $\cE_\quantum$ when $\quantum\ge4$ will be proved in what follows. Anyhow $\cE_{\quantum,\varphi}$, if it exists, is not locally free at the points of $C$, thus it cannot be an instanton bundle, though it enjoys many properties of instanton bundles as we will show in the following proposition.

\begin{proposition}
\label{pKer}
Let $X$ be an ordinary prime Fano threefold  with very ample $\cO_X(h)$. 

For each even $\quantum\ge2$, the sheaf $\cE_{\quantum,\varphi}$ defined in Construction \ref{conPrime}  satisfies the following properties.
\begin{itemize}
\item $\cE_{\quantum,\varphi}$ is torsion--free and $c_1(\cE_{\quantum,\varphi})=0$, $hc_2(\cE_{\quantum,\varphi})=\quantum+2$, $c_3(\cE_{\quantum,\varphi})=0$.
\item $h^0\big(\cE_{\quantum,\varphi}\big)=h^1\big(\cE_{\quantum,\varphi}(-h)\big)=0$.
\item $\cE_{\quantum,\varphi}$ is simple.
\item For the general  $Y\in \Gamma(X)_0$, then $\cE_{\quantum,\varphi}\otimes\cO_Y\cong\cO_\p1^{\oplus2}$.
\item $\dim\Ext^1_{X}\big(\cE_{\quantum,\varphi},\cE_{\quantum,\varphi}\big)=2\quantum+1$, $\Ext^2_{X}\big(\cE_{\quantum,\varphi},\cE_{\quantum,\varphi}\big)=\Ext^3_{X}\big(\cE_{\quantum,\varphi},\cE_{\quantum,\varphi}\big)=0$.
\end{itemize}
\end{proposition}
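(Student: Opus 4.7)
The approach is to use the defining short exact sequence \eqref{seqDeform} systematically, combining the inductive hypotheses on $\cE_\quantum$ from Construction \ref{conPrime} with two key geometric facts: a general $C\in\Gamma(X)_0$ has $\cN_{C|X}\cong\cO_{\p1}^{\oplus2}$ (so $\det\cN_{C|X}\cong\cO_C$), and by Lemma \ref{lConic} may be chosen disjoint from the conic $Z$ used to construct $\cE_2$. The Chern class statement comes from Whitney multiplicativity and the known $c_\bullet(\cO_C)$, torsion-freeness is automatic, and the instantonic vanishing $h^1(\cE_{\quantum,\varphi}(-h))=0$ is immediate from the long exact cohomology of \eqref{seqDeform} twisted by $\cO_X(-h)$, using $h^0(\cO_{\p1}(-2))=0$ and the instantonic condition on $\cE_\quantum$. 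The vanishing $h^0(\cE_{\quantum,\varphi})=0$ is trivial for $\quantum\ge4$, where $\cE_\quantum$ is $\mu$-stable with $c_1=0$; for $\quantum=2$, the unique section of $\cE_2$ restricts to a nowhere-zero section of $\cE_2|_C\cong\cO_{\p1}^{\oplus2}$ because $Z\cap C=\emptyset$, and for generic $\varphi$ composes to a non-zero element of $H^0(\cO_C)$. The restriction statement follows since $\cE_{\quantum,\varphi}$ agrees with $\cE_\quantum$ on $X\setminus C$, while a general $Y\in\Gamma(X)_0$ is disjoint from $C$.

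Simplicity splits into two cases. For $\quantum\ge4$, $\cE_\quantum$ is $\mu$-stable hence simple, and $\cE_{\quantum,\varphi}^{\vee\vee}\cong\cE_\quantum$ because $\cO_C$ has codimension-two support; torsion-freeness then makes the double-dual map $\Hom_X(\cE_{\quantum,\varphi},\cE_{\quantum,\varphi})\hookrightarrow\Hom_X(\cE_\quantum,\cE_\quantum)=\bC$ injective. For $\quantum=2$, $\cE_2$ is not simple and carries a non-scalar endomorphism $\psi\colon\cE_2\twoheadrightarrow\cI_{Z|X}\hookrightarrow\cO_X\hookrightarrow\cE_2$; for generic $\varphi$ the composition of the factoring section $\cO_X\to\cE_2$ with $\varphi$ is a non-zero section of $\cO_C$, so $\psi$ does not preserve $\cE_{2,\varphi}$ and only scalars descend. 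The Ext computations then reduce to repeated diagram chases with \eqref{seqDeform} and its twist by $\cO_X(-h)$. The local-to-global identity $\Ext^i(\cO_C,\cF)\cong H^{i-2}(\det\cN_{C|X}\otimes\cF|_C)$ for locally free $\cF$, together with $\cE_\quantum|_C\cong\cO_{\p1}^{\oplus2}$, the inductive $\Ext^{\ge2}(\cE_\quantum,\cE_\quantum)=0$, and Serre duality on $X$, gives $\Ext^2(\cE_{\quantum,\varphi},\cE_\quantum)=\Ext^3(\cE_{\quantum,\varphi},\cE_\quantum)=0$ first; a second pass yields $\Ext^1(\cE_{\quantum,\varphi},\cE_{\quantum,\varphi}(-h))=0$, equivalently $\Ext^2(\cE_{\quantum,\varphi},\cE_{\quantum,\varphi})=0$ by Serre duality, and $\Ext^3=0$ drops out analogously. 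Finally the Euler characteristic
\begin{equation*}
\chi(\cE_{\quantum,\varphi},\cE_{\quantum,\varphi})=\chi(\cE_\quantum,\cE_\quantum)-\chi(\cE_\quantum,\cO_C)-\chi(\cO_C,\cE_\quantum)+\chi(\cO_C,\cO_C)=-2\quantum,
\end{equation*}
in which the mixed terms are $2$ each by Riemann-Roch on $C$, $\chi(\cO_C,\cO_C)=0$ from the $\sExt$ computation, and $\chi(\cE_\quantum,\cE_\quantum)=4-2\quantum$ by Equality \eqref{Ext12}, forces $\dim\Ext^1=2\quantum+1$.

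The main obstacle is verifying $\Ext^2(\cE_2,\cE_2)=0$ in the base case, since $\cE_2$ is only strictly semistable and the corresponding hypothesis of Construction \ref{conPrime} must be established separately for it. I would apply $\Hom_X(\cE_2,-)$ to Sequence \eqref{seqF} to reduce $\Ext^2(\cE_2,\cE_2)$ to $H^1(\cE_2|_Z)$, then show $\cE_2|_Z\cong\cO_{\p1}^{\oplus2}$ by tensoring the Serre sequence with $\cO_Z$ and tracking the Tor term $\sTor_2(\cO_Z,\cO_Z)\cong\det\cN_{Z|X}^\vee\cong\cO_Z$, whose connecting map to $\cO_Z$ must be an isomorphism for rank reasons. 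Once this base step is verified, the induction preserves the full package of properties and the argument becomes uniform across all even $\quantum\ge2$.
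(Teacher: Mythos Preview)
Your overall strategy mirrors the paper's: exploit Sequence \eqref{seqDeform} repeatedly, handle $\quantum=2$ separately, and compute $\dim\Ext^1$ via the Euler characteristic once the higher Ext groups vanish. The treatment of Chern classes, torsion--freeness, the vanishings $h^0=h^1(-h)=0$, the restriction to a general conic, and the final $\chi$--computation all match the paper's argument (the paper derives an auxiliary sequence $0\to\cI_{C\vert X}\to\cE_{2,\varphi}\to\cI_{Z\vert X}\to0$ via the Snake lemma for the base case, but your direct argument that $\varphi\circ\eta\ne0$ for general $\varphi$ is equivalent). For simplicity when $\quantum\ge4$ you use the double--dual embedding $\Hom(\cE_{\quantum,\varphi},\cE_{\quantum,\varphi})\hookrightarrow\Hom(\cE_\quantum,\cE_\quantum)$, whereas the paper observes that $\cE_{\quantum,\varphi}$ is itself $\mu$--stable; both are fine. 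Your simplicity argument for $\quantum=2$ is correct but tacitly uses $\dim\Hom_X(\cE_2,\cE_2)=2$; this is easy (tensor \eqref{seqF} by $\cE_2$ and use $h^0(\cE_2)=1$), but should be stated, and the paper instead simply cites \cite[Lemma~4.5]{B--F1}.

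There is, however, one genuine gap. Your ``second pass'' asserts $\Ext^1_X(\cE_{\quantum,\varphi},\cE_{\quantum,\varphi}(-h))=0$ as though it were a routine diagram chase, but any route to this vanishing (or, equivalently via Serre duality, to $\Ext^2_X(\cE_{\quantum,\varphi},\cE_{\quantum,\varphi})=0$) must at some point control a term of the form $\Hom_X(\cE_{\quantum,\varphi},\cO_C(-h))$ or $\Ext^3_X(\cO_C,\cE_{\quantum,\varphi})$, and here $\cE_{\quantum,\varphi}$ is \emph{not} locally free along $C$, so your local--to--global identity does not apply. Concretely, applying $\Hom_X(-,\cO_C(-h))$ to \eqref{seqDeform} gives an exact sequence
\[
0\longrightarrow \Hom_X(\cE_{\quantum,\varphi},\cO_C(-h))\longrightarrow \Ext^1_X(\cO_C,\cO_C(-h))\longrightarrow \Ext^1_X(\cE_\quantum,\cO_C(-h)),
\]
and one must check that the last map is injective; equivalently, the paper applies $\sHom_X(-,\cO_{\p1}(-2))$ to \eqref{seqDeform} and observes that the resulting four--term $\sExt$ sequence on $C$ has a split injection on the left, forcing $H^0\big(\sHom_X(\cE_{\quantum,\varphi},\cO_{\p1}(-2))\big)=0$. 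This is the crux of the Ext computation and is not a formality. Once you insert this step, your argument is complete and essentially equivalent to the paper's; for the auxiliary vanishing $\Ext^2_X(\cE_2,\cE_2)=0$ in the base case, note that your Tor calculation is unnecessary since $\cE_2\otimes\cO_Z\cong\cN_{Z\vert X}\cong\cO_{\p1}^{\oplus2}$ directly from Equality \eqref{Normal} (the paper simply cites \cite[Lemma~4.4]{B--F1}).
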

\begin{proof}
Trivially $\cE_{\quantum,\varphi}$ is torsion free, because it is contained in the vector bundle $\cE_{\quantum}$. By combining Sequences \eqref{seqF} and \eqref{seqStandard} we obtain
\begin{equation}
\label{seqLong}
0\longrightarrow \cO_X\longrightarrow \cF\longrightarrow\cO_X \longrightarrow\cO_C \longrightarrow0.
\end{equation}
We deduce that the product of the Chern polynomials of $\cF$ and $\cO_C$ must be $1$, hence $c_1(\cO_C)=0$, $c_2(\cO_C)=-C$ and $c_3(\cO_C)=0$. Using these informations we compute the Chern classes of $\cE_{\quantum,\varphi}$ from Sequence \eqref{seqDeform} obtaining $c_1(\cE_{\quantum,\varphi})=0$, $c_2(\cE_{\quantum,\varphi})=c_2(\cE_{\quantum})+C$,  hence $hc_2(\cE_{\quantum,\varphi})=hc_2(\cE_{\quantum})+2$, and $c_3(\cE_{\quantum,\varphi})=0$.

When $\quantum\ge4$, the vanishings $h^0\big(\cE_{\quantum,\varphi}\big)=h^1\big(\cE_{\quantum,\varphi}(-h)\big)=0$ follow trivially from the cohomology of Sequence \eqref{seqDeform}, because the same vanishings hold for $\cE_\quantum$ by induction and $h^0\big(\cO_C\otimes\cO_X(-h)\big)=0$.

Let $\quantum=2$. Notice that $\im(\eta)\not\subseteq\cE_{2,\varphi}$, hence the restriction $\varphi\circ\eta\colon \cO_X\to\cO_{C}$ is surjective. Combining Sequences \eqref{seqF} and \eqref{seqDeform}, the Snake lemma implies the existence of an exact sequence
\begin{equation}
\label{seqBF}
0\longrightarrow\cI_{C\vert X}\longrightarrow\cE_{2,\varphi}\longrightarrow\cI_{Z\vert X}\longrightarrow0,
\end{equation}
whence we deduce $h^0\big(\cE_{2,\varphi}\big)=h^1\big(\cE_{2,\varphi}(-h)\big)=0$.

Let us prove that $\cE_{\quantum,\varphi}$ is simple. If $\quantum=2$ then it follows from \cite[Lemma 4.5]{B--F1}. If $\quantum\ge4$, then $\cE_{\quantum,\varphi}$ is actually $\mu$--stable, because each sheaf destabilizing $\cE_{\quantum,\varphi}$, also destabilizes $\cE_{\quantum}$, because $\mu(\cE_{\quantum,\varphi})=\mu(\cE_{\quantum})$. Thus $\cE_{\quantum,\varphi}$ is also simple thanks to \cite[Corollary 1.2.8]{H--L}.


By Lemma \ref{lConic}, a general $Y\in \Gamma(X)_0$ does not intersect $C$. Restricting Sequence \eqref{seqDeform} to $Y$ we then obtain  $\cE_{\quantum,\varphi}\otimes\cO_Y\cong\cE\otimes\cO_Y\cong\cO_\p1^{\oplus2}$. In the remaining part of the proof, we will compute  $\dim\Ext^i_{X}\big(\cE_{\quantum,\varphi},\cE_{\quantum,\varphi}\big)$, for $i\ge1$. 

Recall that $\Ext^3_{X}\big(\cE_{\quantum},\cE_{\quantum}\big)=0$ for each $\quantum\ge4$, thanks to Lemma \ref{lExt3}, because $\cE_\quantum$ is simple. If $\quantum=2$, then the isomorphism $\cE_2\cong\cE_2^\vee$ and Equality \eqref{SerreDual} yield
$$
\dim\Ext^3_{X}\big(\cE_{2},\cE_{2}\big)=h^0\big(\cE_2\otimes\cE_2(-h)\big).
$$
The cohomologies of Sequences \eqref{seqInstanton} and \eqref{seqStandard} tensored by $\cE_2(-h)$ imply
$$
h^0\big(\cE_2\otimes\cE_2(-h)\big)\le 2h^0\big(\cE_2(-h)\big).
$$
Again the cohomologies of the same sequences tensored by $\cO_X(-h)$ imply that the dimension on the right is zero.
Similarly, $\Ext^2_{X}\big(\cE_{\quantum},\cE_{\quantum}\big)=0$ when $\quantum\ge4$ by hypothesis. If $\quantum=2$, then $\Ext^2_{X}\big(\cE_{2},\cE_{2}\big)=0$ thanks to \cite[Lemma 4.4]{B--F1}. 

If we apply $\Hom_{X}\big(\cE_{\quantum},-\big)$ to Sequence \eqref{seqDeform}, then $\Ext^i_{X}\big(\cE_{\quantum},\cE_{\quantum,\varphi}\big)\cong\Ext^i_{X}\big(\cE_{\quantum},\cE_{\quantum}\big)$ for $i\ge2$, because $\Ext^i_{X}\big(\cE_{\quantum},\cO_C\big)\cong H^i\big(\cO_C^{\oplus2}\big)=0$ for $i\ge1$. It follows that $\Ext^3_{X}\big(\cE_{\quantum},\cE_{\quantum,\varphi}\big)=\Ext^2_{X}\big(\cE_{\quantum},\cE_{\quantum,\varphi}\big)=0$ for each $\quantum\ge2$. Thus, 
$$
\Ext^3_{X}\big(\cE_{\quantum,\varphi},\cE_{\quantum,\varphi}\big)=0, \qquad \Ext^2_{X}\big(\cE_{\quantum,\varphi},\cE_{\quantum,\varphi}\big)\subseteq\Ext^3_{X}\big(\cO_C,\cE_{\quantum,\varphi}\big),
$$
 for each $\quantum\ge2$, by applying $\Hom_{X}\big(-,\cE_{\quantum,\varphi}\big)$ to Sequence \eqref{seqDeform}.
 
Equality \eqref{SerreDual} implies $\Ext^3_{X}\big(\cO_C,\cE_{\quantum,\varphi}\big)\cong \Hom_{X}\big(\cE_{\quantum,\varphi},\cO_C\otimes\cO_X(-h)\big)$: notice that $\cO_C\otimes\cO_X(-h)\cong\cO_{\p1}(-2)$. Thus from $\cN_{C\vert X}\cong\cO_\p1^{\oplus2}$ we deduce
$$
\sExt^1_{X}\big(\cO_C,\cO_C\otimes\cO_X(-h)\big)\cong\cO_{\p1}(-2)^{\oplus2}
$$
(see Equalities \eqref{Local}). By the above equalities, applying $\sHom_{X}\big(-,\cO_{\p1}(-2)\big)$ to Sequence \eqref{seqDeform} we obtain the exact sequence
\begin{align*}
0\longrightarrow\cO_{\p1}(-2)\longrightarrow\cO_{\p1}(-2)^{\oplus2}\longrightarrow\sHom_{X}\big(\cE_{\quantum,\varphi},\cO_{\p1}(-2)\big)\longrightarrow\cO_{\p1}(-2)^{\oplus2}\longrightarrow0,
\end{align*}
because, being $\cE_{\quantum}$ locally free, we have $\sExt^1_{X}\big(\cE_{\quantum},\cO_{\p1}(-2)\big)=0$. Trivially the injective map on the left has a section. Thus taking the cohomology of the induced short exact sequence yields 
$$
\Hom_{X}\big(\cE_{\quantum,\varphi},\cO_{\p1}(-2)\big)=H^0\big(\sHom_{X}\big(\cE_{\quantum,\varphi},\cO_{\p1}(-2)\big)\big)=0,
$$
hence $\Ext^2_{X}\big(\cE_{\quantum,\varphi},\cE_{\quantum,\varphi}\big)=0$. 
Thus
$
\dim\Ext^1_{X}\big(\cE_{\quantum,\varphi},\cE_{\quantum,\varphi}\big)=1-\chi(\cE_{\quantum,\varphi},\cE_{\quantum,\varphi})$.
By applying $\Hom_{X}(\cE_{\quantum,\varphi},-\big)$, $\Hom_{X}(-,\cE_{\quantum}\big)$ and $\Hom_{X}(-,\cO_C\big)$ to Sequence \eqref{seqDeform} we deduce that
$$
\chi(\cE_{\quantum,\varphi},\cE_{\quantum,\varphi})=\chi(\cE_{\quantum},\cE_{\quantum})-\chi(\cE_{\quantum},\cO_C)-\chi(\cO_C,\cE_{\quantum})+\chi(\cO_C,\cO_C).
$$
We have $\chi(\cE_{\quantum},\cO_C)=2\chi(\cO_C)=2$ and $\chi(\cO_C,\cE_{\quantum})=-2\chi(\cO_C\otimes\cO_X(-h))=2$. Equality \eqref{RRGeneral} returns $\chi(\cE_{\quantum},\cE_{\quantum})=4-2\quantum$. From Sequence \eqref{seqLong} we obtain $\chi(\cO_C,\cO_C)=2\chi(\cO_X,\cO_C)-\chi(\cF,\cO_C)=0$, because $\cF\otimes\cO_C\cong\cN_{C\vert X}\cong\cO_C^{\oplus2}$. We deduce from the above computations that $\dim\Ext^1_{X}\big(\cE_\varphi,\cE_\varphi\big)=2\quantum+1$. 

The proof of the statement is then complete.
\end{proof}

Before starting the proof of Theorem \ref{tPrime} we collect below some helpful remarks. We use the same notation as in Construction \ref{conPrime}.

\begin{remark}
\label{rUnique}
If $Y\subseteq X$ is  any pure $1$--dimensional scheme, then
$$
\Ext^1_X\big(\cO_Y,\cE_{\quantum,\varphi}\big)=\left\lbrace\begin{array}{ll} 
0\quad&\text{if $C\ne Y$,}\\
1\quad&\text{if $C= Y$.}
\end{array}\right.
$$
Indeed, by applying $\Hom_X\big(\cO_Y,-\big)$ to Sequence \eqref{seqDeform} and taking into account Equality \eqref{SerreDual} we obtain 
$$
\Ext^1_X\big(\cO_Y,\cE_{\quantum,\varphi}\big)\cong \Hom_X\big(\cO_Y,\cO_C\big)\cong H^0\big( \sHom_X\big(\cO_Y,\cO_C\big)\big).
$$
We conclude by noticing that
$$
\sHom_X\big(\cO_Y,\cO_C\big)=\left\lbrace\begin{array}{ll} 
0\quad&\text{if $C\ne Y$,}\\
\cO_C\quad&\text{if $C=Y$.}
\end{array}\right.
$$
\end{remark}

\begin{remark}
\label{rBidual}
By applying $\Hom_X\big(-,\cE_\quantum\big)$ to Sequence \eqref{seqDeform} and taking into account Equality \eqref{SerreDual} we obtain $\Hom_X\big(\cE_{\quantum,\varphi},\cE_{\quantum}\big)\cong \bC$. 

By applying $\sHom_X\big(-,\cO_X\big)$ to Sequence \eqref{seqDeform} we deduce $\cE_{\quantum,\varphi}^{\vee\vee}\cong\cE_\quantum$, because $\cO_C$ is a torsion sheaf on $X$ and $\sExt^1_X\big(\cO_C,\cO_X\big)\cong\sExt^1_X\big(\cO_C,\omega_X\big)\otimes\cO_X(2h)=0$. 

The above isomorphisms show that the inclusion $\cE_{\quantum,\varphi}\to\cE_{\quantum}$ can be uniquely identified with the canonical monomorphism $\cE_{\quantum,\varphi}\to\cE_{\quantum,\varphi}^{\vee\vee}$, i.e. Sequence \eqref{seqDeform} is canonically isomorphic with 
\begin{equation*}
0\longrightarrow\cE_{\quantum,\varphi}\longrightarrow\cE_{\quantum,\varphi}^{\vee\vee}\longrightarrow\cO_C\longrightarrow0.
\end{equation*}
\end{remark}

Let $\cS_X\big(2;-\varepsilon h, \zeta, 0\big)\subseteq\cS_X$ be the locus of simple rank $2$ torsion--free sheaves with fixed Chern classes  $-\varepsilon h\in H^2(X)$, $\zeta\in H^4(X)$ and $0\in H^6(X)$ in what follows
In Proposition \ref{pKer} we proved that $\cE_{\quantum,\varphi}$ represents a point in $\cS_X\big(2;0,(\quantum+2)\ell,0\big)$. The core of the proof of Theorem \ref{tPrime} is the following proposition.

\begin{proposition}
\label{pDeform}
Let $X$ be an ordinary prime Fano threefold  with very ample $\cO_X(h)$. 

For each even $\quantum\ge2$, a general deformation $\cE\in \cS_X\big(2;0,(\quantum+2)\ell,0\big)$ of  $\cE_{\quantum,\varphi}$ is a $\mu$--stable vector bundle such that $\Ext^2_{X}\big(\cE,\cE\big)=0$ and $\cE\otimes\cO_Y\cong\cO_\p1^{\oplus2}$ for each general  $Y\in \Gamma(X)_0$.
\end{proposition}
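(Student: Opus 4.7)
My plan is to exploit the smoothness of the moduli space $\cS_X\big(2;0,(\quantum+2)\ell,0\big)$ at $[\cE_{\quantum,\varphi}]$ granted by Proposition~\ref{pKer}, and to exhibit the locus of non--locally--free sheaves in a neighborhood of this point as a proper subvariety. The remaining assertions will then follow by standard semicontinuity and openness arguments.

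Proposition~\ref{pKer} gives $\Ext^2_{X}\big(\cE_{\quantum,\varphi},\cE_{\quantum,\varphi}\big)=0$ and $\dim\Ext^1_{X}\big(\cE_{\quantum,\varphi},\cE_{\quantum,\varphi}\big)=2\quantum+1$, so by standard deformation theory for simple sheaves (\cite{A--K}) the moduli $\cS_X\big(2;0,(\quantum+2)\ell,0\big)$ is smooth at $[\cE_{\quantum,\varphi}]$ of dimension $2\quantum+1$. Let $\cC$ denote the unique irreducible component passing through this point; it remains to show that the general point of $\cC$ represents a locally free sheaf $\cE$ with the required additional properties.

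The key step is a dimension estimate for the non--locally--free sheaves in $\cC$. By Remarks~\ref{rBidual} and~\ref{rUnique}, any sheaf of the same form as $\cE_{\quantum,\varphi}$ is uniquely recovered from the triple $(\cG,C',[\varphi'])$ with $\cG=\cE_{\quantum,\varphi'}^{\vee\vee}$, $\cO_{C'}\cong\cE_{\quantum,\varphi'}^{\vee\vee}/\cE_{\quantum,\varphi'}$, and $[\varphi']\in\bP\big(\Hom_X(\cG,\cO_{C'})\big)$. For $\quantum\ge4$, the inductive hypothesis provides a smooth open locus in $\cS_X\big(2;0,\quantum\ell,0\big)$ of dimension $2\quantum-3$ where the biduals $\cG$ live; combined with $\dim\Gamma(X)_0=2$ and $\dim\Hom_X(\cG,\cO_{C'})=h^0\big(\cG\otimes\cO_{C'}\big)=2$ for general $C'$ (using the trivial splitting $\cG\otimes\cO_{C'}\cong\cO_{\p1}^{\oplus2}$), this identifies the non--locally--free locus with the image of a parameter space of dimension $(2\quantum-3)+2+1=2\quantum<2\quantum+1$. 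Hence the general $\cE\in\cC$ is locally free. The base case $\quantum=2$ is more delicate, since there $\cE_2$ is only strictly $\mu$--semistable and the naive dimension count no longer forces the non--locally--free locus to be proper; this case is handled by the explicit smoothing argument of \cite[Theorem 4.1]{B--F1}, which I would reproduce.

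For the remaining properties, the vanishing $h^0(\cE_{\quantum,\varphi})=0$ in Proposition~\ref{pKer} persists on an open neighborhood by upper semicontinuity, so $h^0(\cE)=0$. If $\cE$ were not $\mu$--stable, Lemma~\ref{lSimple} would give an inclusion $\cO_X(D)\hookrightarrow\cE$ with $Dh^2\ge\mu(\cE)=0$; since $\varrho_X=1$, $D=dh$ with $d\ge 0$, and the resulting non--zero section of $\cO_X(D)$ would force $h^0(\cE)\ne 0$, a contradiction. The vanishing $\Ext^2_X(\cE,\cE)=0$ persists similarly by upper semicontinuity. Finally, letting $\cY\to\Gamma(X)_0$ be the universal conic and pulling back the universal family on $\cC\times X$ to $\cC\times\cY$, the condition on a pair $(\cE,Y)$ that $\cE\otimes\cO_Y$ is non--trivial is closed; since by Proposition~\ref{pKer} it fails on the non--empty open $\{[\cE_{\quantum,\varphi}]\}\times\cU$ with $\cU\subseteq\Gamma(X)_0$, it fails on a non--empty open subset of $\cC\times\Gamma(X)_0$, whose projection to $\cC$ contains the general $\cE$. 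The principal difficulty of the proof is the locally--freeness of the general deformation in the base case $\quantum=2$.
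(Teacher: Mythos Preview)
Your overall strategy matches the paper's: bound the dimension of a ``bad'' locus inside the smooth component $\cC$ by $2\quantum$ and conclude by openness/semicontinuity. However, there is a genuine gap in your identification of the non--locally--free locus. You write that this locus ``is the image of a parameter space'' of triples $(\cG,C',[\varphi'])$ with $\cG$ a $\mu$--stable instanton of quantum number $\quantum$ and $C'\in\Gamma(X)_0$. But a non--locally--free torsion--free sheaf $\cE$ near $\cE_{\quantum,\varphi}$ need not have this shape a priori: its bidual could lie outside the given component (or fail to be locally free), and the quotient $\cE^{\vee\vee}/\cE$ could be supported on points, on a line, on a union of lines, or could be a twist $\cO_{C'}(u)$ with $u\ne0$. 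Your dimension count therefore bounds only the locus of sheaves obtained via Construction~\ref{conPrime}, not the full non--locally--free locus.

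This is precisely the hard part of the paper's argument, and it is uniform in $\quantum\ge2$ (the case $\quantum=2$ is not handled separately by citing \cite{B--F1}). The paper picks a one--parameter family $\frak E\to S$ through $\cE_{\quantum,\varphi}$ avoiding the bad locus and, assuming $\frak E_s$ is never locally free, analyses the quotient $\frak T_s=\frak E_s^{\vee\vee}/\frak E_s$: first it shows $T_s$ is pure of dimension~$1$; then an asymptotic comparison of $\chi(\frak T_s(th))$ with $h^2(\cE_{\quantum,\varphi}(th))=-2t-1$ for $t\ll0$ forces $hc_2(\frak T_s)\ge-2$, so $T_s$ is either a conic or contains a line; the line case is excluded via $\Ext^1_X(\cO_N(t),\frak E_s)=0$ (which would split the bidual sequence); finally one shows $\frak T_s\cong\cO_{T_s}$ with $T_s$ an integral conic, so $\frak E_s$ is after all in the bad locus, a contradiction. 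Without this analysis, your dimension count does not establish that the general deformation is locally free.
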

\begin{proof}
In what follows we will denote by $\cS(2)$ the locus inside $\cS_X\big(2;0,2\ell,0\big)$ of bundles obtained via Construction \ref{conInstanton} from a general $Z\in\Gamma(X)_0$ and  by $\cS(\quantum+2)$  the component of $\cS_X\big(2;0,(\quantum+2)\ell,0\big)$ containing the point corresponding to the  sheaf $\cE_{\quantum,\varphi}$ for each even $\quantum\ge2$. Moreover, $\cS(\quantum+2)$ is smooth at $\cE_{\quantum,\varphi}$ of dimension
$$
\dim\Ext^1_{X}\big(\cE_{\quantum,\varphi},\cE_{\quantum,\varphi}\big)=2\quantum+1.
$$
Let $\cS^{bad}(\quantum+2)\subseteq \cS(\quantum+2)$ be the locus of sheaves obtained via Construction \ref{conPrime}. We claim that
\begin{equation}
\label{Bad}
\dim(\cS^{bad}(\quantum+2))\le 2\quantum.
\end{equation}

If $\quantum=2$, then the points in $\cS^{bad}(\quantum+2)$ are parameterized by two points in $\Gamma(X)_0$ (the conics $Z$ and $C$) and elements in $\bP\big(\Ext_X^1\big(\cI_{Z\vert X},\cI_{C\vert X}\big)\big)$ (the Sequence \eqref{seqBF}). Thanks to the computations in \cite[Proof of Step 3 of the proof of Theorem 4.1]{B--F1}, we know that the latter projective space reduces to a single point, whence we deduce that Inequality \eqref{Bad} is fulfilled.

If $\quantum\ge4$ is even, then the points in $\cS^{bad}(\quantum+2)$ are parameterized by the points of $\cS(\quantum)$ (the choice of $\cE_\quantum$), the points in $\Gamma(X)_0$ (the conic $C$) and elements in  $\bP\big(\Hom_{\p1}\big(\cO_{\p1}^{\oplus2},\cO_{\p1}\big)\big)$ (the morphism $\Hom_X\big(\cE_\quantum\otimes\cO_C,\cO_C\big)$ up to scalars). Thus Inequality \eqref{Bad} is again fulfilled.

It follows the existence of a flat family of torsion--free sheaves in $\cS(\quantum+2)$ over a smooth connected curve $\frak E\to S$ with $\frak E_{s_0}\cong\cE_{\quantum, \varphi}$ and $\frak E_s\not\in \cS^{bad}(\quantum+2)$ for $s\ne s_0$. Thanks to \cite[Satz 3]{B--P--S} we can assume $\Ext^2_{X}\big(\frak E_s,\frak E_s\big)=0$ for general $s\in S$.

For each $s\in S$ we have a natural exact sequence of sheaves over $X$
\begin{equation}
\label{seqBidual}
0\longrightarrow\frak E_s\longrightarrow(\frak E_s)^{\vee\vee}\longrightarrow\frak T_s\longrightarrow0.
\end{equation}
where $\frak T_s$ is a torsion sheaf on $X$ and the dual is taken with respect to $\cO_X$. Since $\frak E_s$ is torsion--free, it follows that the support $T_s$ of $\frak T_s$ has codimension at least $2$ in $X$ (see \cite[Corollary to Theorem II.1.1.8]{O--S--S}). Thanks to Proposition \ref{pKer}, by semicontinuity we can assume that $\frak E_s$ is simple and $\mu$--semistable, $h^0\big(\frak E_s\big)=h^1\big(\frak E_s(-h)\big)=0$ and  $\frak E_s\otimes\cO_Y\cong\cO_{\p1}^{\oplus2}$ for a general $Y\in\Gamma(X)_0$. Actually, we have two possible cases, namely either $T_s=\emptyset$ for general $s\in S$ or $T_s\ne\emptyset$ for each $s\in S$. 

In the former case we can find $s\in S$ such that $\frak E_s\cong(\frak E_s)^{\vee\vee}$ is reflexive. We deduce that $\frak E_s$ is a vector bundle, because $c_3(\frak E_s)=0$, thanks to \cite[Proposition 2.6]{Ha1}. We already know that $\Ext^2_{X}\big(\frak E_s,\frak E_s\big)=0$: an easy computation returns also the other dimensions. Moreover, $\cE\otimes\cO_Y\cong\cO_\p1^{\oplus2}$ for each general $Y\in \Gamma(X)_0$. Finally, $h^0\big(\frak E_s\big)=0$ by semicontinuity and Proposition \ref{pKer}, hence $\frak E_s$ can be assumed  $\mu$--stable by Lemma \ref{lHoppe} as well. Thus the statement is proved with $\cE:=\frak E_s$.

In what follows we will show that the case $T_s\ne\emptyset$ for each $s\in S$ does not occur. First we notice that if $h^0\big((\frak E_s)^{\vee\vee}(-h)\big)\ne0$, then we can find a subsheaf $\mathcal L\subseteq(\frak E_s)^{\vee\vee}$ with $\mu(\mathcal L)=h^3>0$. If $\cK:=\mathcal L\cap\frak E_s$, then $\mu(\cK)=\mu(\mathcal L)$, because the cokernel of the map $\cK\to\mathcal L$ induced by $\frak E_s\to(\frak E_s)^{\vee \vee}$ has support contained in $T_s$: this fact contradicts the $\mu$--semistability of $\frak E_s$, because $\cK\subseteq\frak E_s$. It follows from the cohomology of Sequence \eqref{seqBidual} tensored by $\cO_{X}(-h)$ that
\begin{equation}
\label{VanishingT}
h^0\big(\frak T_s(-h)\big)=0,
\end{equation}
hence $T_s$ cannot contain embedded points, i.e. it has pure dimension $1$. Equality \eqref{VanishingT} implies 
\begin{equation}
\label{C}
\chi( \frak T_s(th))=-h^1\big(\frak T_s(th)\big)
\end{equation}
for each $t\le-1$. Since $(\frak E_s)^{\vee \vee}$ is reflexive, it follows that $h^1\big((\frak E_s)^{\vee \vee}(th)\big)=0$ for $t\ll0$ thanks to \cite[Remark 2.5.1]{Ha1}. Thus the cohomology of Sequence \eqref{seqBidual} tensored by $\cO_X(th)$ yields
\begin{equation}
\label{A}
h^1\big(\frak T_s(th)\big)\le h^2\big(\frak E_s(th)\big)\le h^2\big(\cE_{\quantum,\varphi}(th)\big)
\end{equation}
when $t\ll0$ by semicontinuity.

We have $c_1((\frak E_s)^{\vee\vee})=c_1(\frak E_s)$. Moreover, we know that $c_3((\frak E_s)^{\vee\vee}(th))$, $c_3(\frak E_s(th))$ and $c_2(\frak T_s(th))$ are independent of $t$ for each $t\in\bZ$, thanks to \cite[Lemma 2.1]{Ha1}.  An easy Chern class computation from Sequence \eqref{seqBidual} then yields
\begin{equation*}
\begin{gathered}
c_1(\frak T_s(th))=0,\qquad hc_2(\frak T_s(th))=hc_2((\frak E_s)^{\vee\vee})-\quantum-2,\\ c_3(\frak T_s(th))=c_3((\frak E_s)^{\vee\vee})-2thc_2(\frak T_s),
\end{gathered}
\end{equation*}
because $c_i(\frak E_s)=c_i(\cE_{\quantum,\varphi})$. Equality \eqref{RRGeneral} for the sheaf $\frak T(th)$ yields
\begin{equation}
\label{RRT}
\chi(\frak T_s(th))=\frac{c_3((\frak E_s)^{\vee\vee})-hc_2(\frak T_s)}2-thc_2(\frak T_s).
\end{equation}

Since $\cE_\quantum$ is locally free, we know that $h^i\big(\cE_\quantum(th)\big)=0$ for $t\ll0$ and $i=1,2$, hence the cohomology of Sequence \eqref{seqDeform} tensored by $\cO_X(th)$ returns
\begin{equation}
\label{B}
h^2\big(\cE_{\quantum,\varphi}(th)\big)=h^1\big(\cO_C(th)\big)=-\chi(\cO_C(th))=-2t-1,
\end{equation}
for $t\ll0$. 

By combining Equalities \eqref{RRT} and \eqref{B} with Inequalities \eqref{A} and \eqref{C} we finally obtain
$$
\frac12\left( {c_3((\frak E_s)^{\vee\vee})} -hc_2(\frak T_s)\right)-thc_2(\frak T_s)\ge2t+1
$$
for $t\ll0$. Thus $hc_2(\frak T_s)\ge-2$: it follows that either the irreducible components of $T_s$ are up to two lines or $T_s$ is an integral conic.

In what follows we prove that the first configuration cannot occur. Let $L\subseteq T_s$ be a line: if equality holds, then there is an integer $a$ such that $\frak T_s\cong\cO_L(a)$. Assume that $L\ne T_s$, so that $\deg(T_s)=2$. Let $M\subseteq X$ be another line such that  $L+M=T_s$ in $A^2(X)$. The restriction map $\cO_{T_s}\to\cO_L$ induces an exact sequence
\begin{equation}
\label{seqExtension}
0\longrightarrow\cB\longrightarrow\frak T_s\longrightarrow\cA\longrightarrow0,
\end{equation}
where $\cA$ is supported on $L$ and $\cB$ on $M$. Since $\frak T_s$ is torsion free, then again there are integers $a,b$ such that $\cA\cong\cO_L(a)$ and $\cB\cong\cO_M(b)$.

We claim that for each line $N\subseteq X$ and integer $t$ we have
\begin{equation}
\label{Extension}
\Ext_X^1\big(\cO_N(t),\frak E_s\big)=0.
\end{equation}
Indeed, by semicontinuity it suffices to check that $\Ext_X^1\big(\cO_N(t),\cE_{\quantum,\varphi}\big)=0$. To show such a vanishing we apply the functor $\Hom_X\big(\cO_N(t),-\big)$ to Sequence \eqref{seqDeform} obtaining the exact sequence
$$
\Hom_X\big(\cO_N(t),\cO_C\big)\longrightarrow\Ext_X^1\big(\cO_N(t),\cE_{\quantum,\varphi}\big) \longrightarrow\Ext_X^1\big(\cO_N(t),\cE_{\quantum}\big).
$$
Notice that the space on the left vanishes because $C\in\Gamma(X)_0$ is general, hence  integral, thanks to Remark \ref{rUnique}. Equality \eqref{SerreDual} implies that
$$
\Ext_X^1\big(\cO_N(t),\cE_{\quantum}\big)\cong \Ext_X^2\big(\cE_{\quantum},\cO_N(t)\otimes\cO_X(-h)\big)\cong H^2\big(\cE_{\quantum}(-h)\otimes\cO_N(t)\big)=0,
$$
hence the claimed Vanishing \eqref{Extension} is completely proved.

By applying $\Hom_X\big(-,\frak E_s\big)$ to Sequence \eqref{seqExtension}, we deduce that $\Ext_X^1\big(\frak T_s,\frak E_s\big)=0$, thanks to Vanishing \eqref{Extension}, hence Sequence \eqref{seqBidual} should split. Thus  $\frak T_s$ should  be the torsion subsheaf of $(\frak E_s)^{\vee\vee}$, which is reflexive, hence torsion free, a contradiction. 

Therefore $T_s$ is an integral conic and  $\frak T_s\cong\cO_{T_s}(U)$ where $U\subseteq T_s\cong\p1$ is a divisor linearly equivalent to $up$ for some point $p\in T_s$. Equality \eqref{RRT} with $t=0$ and the Riemann--Roch theorem on $T_s$ yields $2u=c_3((\frak E)^{\vee\vee})$. On the one hand, the sheaf $(\frak E_s)^{\vee\vee}$ is reflexive, thus the locus where it is not locally free has degree $c_3((\frak E_s)^{\vee\vee})\ge0$, thanks to \cite[Proposition 2.6]{Ha1}, hence $u\ge0$. On the other hand, Equality \eqref{VanishingT} forces $u-2<0$. We conclude that $u\in\{\ 0,1\ \}$.

Let $u=1$: we will prove that such a case cannot occur, by showing that $\Ext_X^1\big(\cO_{T_s}(U),\frak E_s\big)=0$ as above. By semicontinuity it suffices to check that $\Ext_X^1\big(\cO_{T_s}(U),\cE_{\quantum,\varphi}\big)=0$. To this purpose we apply the functor $\Hom_X\big(\cO_{T_s}(U),-\big)$ to Sequence \eqref{seqDeform} obtaining the exact sequence
$$
\Hom_X\big(\cO_{T_s}(U),\cO_C\big)\longrightarrow\Ext_X^1\big(\cO_{T_s}(U),\cE_{\quantum,\varphi}\big) \longrightarrow\Ext_X^1\big(\cO_{T_s}(U),\cE_{\quantum}\big).
$$
Equality \eqref{SerreDual} implies that
$$
\Ext_X^1\big(\cO_{T_s}(U),\cE_{\quantum}\big)\cong \Ext_X^2\big(\cE_{\quantum},\cO_{T_s}(U)\otimes\cO_X(-h)\big)\cong H^2\big(\cE_{\quantum}(-h)\otimes\cO_{T_s}(U)\big)=0.
$$

We certainly have $\Hom_X\big(\cO_{T_s}(U),\cO_C\big)\cong H^0\big(\sHom_X\big(\cO_{T_s}(U),\cO_C\big)\big)$. The hypothesis $u=1$ and Remark \ref{rUnique} then imply $\Hom_X\big(\cO_{T_s}(U),\cO_C\big)=0$.

Thus the case $u=1$ is not possible, i.e. $u=0$, hence $\frak T_s\cong\cO_{T_s}$ for each $s\in S$. It follows the existence of a morphism $S\to \Gamma(X)_0$, such that $\frak T$ is exactly the pull--back of the universal conic on $\Gamma(X)_0$. 

Thus $\frak T\to S$ is a flat family and the flatness of the families $\frak E\to S$ and $\frak T\to S$ yields the flatness of the induced family $\bigcup_{s\in S}({\frak E}_s)^{\vee\vee}\to S$. In particular $(\frak E_{s_0})^{\vee\vee}\cong\cE_\quantum$ thanks to Remark \ref{rBidual}, hence $(\frak E_{s})^{\vee\vee}\in \cS(\quantum+2)$ and, consequently, $\frak E_s\in \cS^{bad}(\quantum+2)$ for general $s\in S$, contradicting our initial choice. We conclude that the case $T_s\ne\emptyset$ for each $s\in S$ cannot occur. 

It follows that for general $s\in S$ the sheaf $\cE:=\frak E_s$ is a vector bundle. Since the locus of $\mu$--stable sheaves inside $\cS_X(2;0,(\quantum+2)\ell,0)$ is open, it follows that we can assume that  $\cE$ is $\mu$--stable. By semicontinuity, thanks to Proposition \ref{pKer} and \cite{B--P--S}, we know that $\Ext^2_{X}\big(\cE,\cE\big)=0$ and $\cE\otimes\cO_Y\cong\cO_\p1^{\oplus2}$ for each general $Y\in \Gamma(X)_0$. 
\end{proof}

We are finally ready to prove Theorem \ref{tPrime} stated in the introduction by induction on even integers $\quantum\ge4$. 
The base step is  \cite[Theorem 4.1]{B--F1}.

\medbreak
\noindent{\it Proof of Theorem \ref{tPrime}.}
If $\quantum=2$, then Proposition \ref{pDeform} guarantees that the general deformation $\cE\in\cS_X(2;0,4\ell,0)$ of $\cE_{2,\varphi}$ is a vector bundle such that $\Ext^2_{X}\big(\cE,\cE\big)=0$. Moreover, by semicontinuity we can also assume $h^1\big(\cE(-h)\big)=0$ and $h^0\big(\cE\big)=0$. We deduce that $\cE$ is $\mu$--stable by Lemma \ref{lHoppe}, hence it is an instanton bundle with quantum number $hc_2(\cE)=hc_2(\cE_2)+hC=4$.

Let $\quantum\ge4$ be even and assume the existence of a $\mu$--stable instanton bundle $\cE_{\quantum}$ with quantum number $\quantum$ such that $\Ext^2_{X}\big(\cE_{\quantum},\cE_{\quantum}\big)=0$ and $\cE_{\quantum}\otimes\cO_Y\cong\cO_\p1^{\oplus2}$ for each general integral $Y\in \Gamma(X)_0$. The same argument used in the base step implies the existence of a $\mu$--stable instanton bundle $\cE\in\cS_X(2;0,(\quantum+2)\ell,0)$, such that $\Ext^2_{X}\big(\cE,\cE\big)=0$ and $\cE\otimes\cO_Y\cong\cO_\p1^{\oplus2}$ for each general  $Y\in \Gamma(X)_0$. 
\qed
\medbreak

We conclude this part by showing another possible approach to the construction of even instanton bundles on a Fano threefold $X$ with $i_X=1$.

\begin{remark}
\label{rCanonical}
Each Fano threefold $X$ with $i_X=\varrho_X=1$ and $g_X\ge4$ contains a canonical curve $Z\subseteq\p{g_X+1}$ as pointed out in \cite[Proposition 8]{Bea}. Indeed, if $\cE$ is the bundle constructed in \cite[Theorem 4.1]{B--F1}, then $\cE(h)$ is globally generated and its general section in $H^0\big(\cE(h)\big)$ vanishes on a canonical curve $Z$ with $p_a(Z)=g_X+2$. The same is true if $g_X=3$ and we restrict to general $X$.

More in general, let $X$ be any Fano threefold with $i_X=1$ and very ample $\cO_X(h)$. If $X$ contains a canonical curve $Z\subseteq\p{g_X+1}$, then $\omega_Z\cong\cO_X(h)\otimes\cO_Z$, $p_a(Z)=g_X+2$ and $\deg(Z)=2g_X+2$. Since $h^i\big(\cO_X(h)\big)=0$ for $i=1,2$, it follows from Theorem \ref{tSerre} the existence of vector bundle $\cE$ fitting into the exact sequence
\begin{equation}
\label{seqIndex1}
0\longrightarrow\cO_X(-h)\longrightarrow\cE\longrightarrow\cI_{Z\vert X}(h)\longrightarrow0.
\end{equation}
Notice  that $c_1(\cE)=0$ and $c_2(\cE)=Z-h^2$. Thus $hc_2(\cE)=4$.

If $\varrho_X=1$, then $\cE$ is a minimal $\mu$--stable even instanton bundle $\cE$. Indeed, the cohomology of Sequence \eqref{seqIndex1} returns $h^0\big(\cE\big)=0$, because $Z$, being a canonical curve, is non--degenerate inside $\p{g_X+1}$. Lemma \ref{lHoppe} then implies that $\cE$ is $\mu$--stable. 

The cohomology of Sequence \eqref{seqStandard} yields $h^1\big(\cI_{Z\vert X})\big)=0$, because $Z$ is  integral. Thus again the cohomology of Sequence \eqref{seqIndex1} tensored by $\cO_X(-h)$ returns $h^1\big(\cE(-h)\big)=0$. 

If $\varrho_X\ge2$, then the $\mu$--(semi)stability of $\cE$ cannot be deduced as easily as when $\varrho_X=1$. Indeed, if $\mu(\cO_X(D))\ge\mu(\cE)=0$ for a line bundle $\cO_X(D)\subseteq\cE$, then Sequence \eqref{seqIndex1} implies $h^0\big(\cI_{Z\vert X}(h-D)\big)=h^0\big(\cE(-D)\big)$. It follows that $\cE$ is $\mu$--stable if there are no surfaces $S\subseteq X$ through $Z$ with $\deg(S)\le 2g_X-2$. 

Notice that there always exists an infinite family of surfaces of degree $2g_X+1$ through $Z$, namely the cones projecting $Z$ from its points. For some results on the computation of the minimal degree of surfaces through a canonical curve see \cite{C--H}.
\end{remark}

\section{Some further properties of instanton bundles}
\label{sEarnest}
In this sections we first study the instanton bundles obtained in the previous section when restricted to suitable divisors. Then we will inspect their splitting behaviour when restricted to lines. 

The following property has been introduced in \cite{C--C--G--M}

\begin{definition}
Let $X$ be a Fano threefold.

We say that an instanton bundle $\cE$ on $X$ with $c_1(\cE)=-\varepsilon h$ is earnest if $h^1\big(\cE(-q_X^\varepsilon h-D)\big)=0$  for each smooth integral  divisor $D\subseteq X$.
\end{definition}

Let us spend few words on earnest instanton bundles. Let $D$ be an integral, smooth divisor on $X$ and assume that $\cO_X(h)$ is very ample. When $\varrho_X=1$, then $\cO_X(D)$ is very ample, hence the cohomology of Sequence \eqref{seqMaruyama} tensored by $\cE(-q_X^\varepsilon)$ implies $h^1\big(\cE(-q_X^\varepsilon h-D)\big)=h^0\big(\cE(-q_X^\varepsilon h)\otimes\cO_D\big)$. When $q_X^\varepsilon\ge1$ the dimension on the right vanishes by \cite[Theorem 3.1]{Ma1}. When $q_X^\varepsilon=0$, then $i_X=\varepsilon=1$: in this case $\mu(\cE\otimes\cO_D)<0$ and we can argue as in the previous case. 

When $\varrho_X\ge2$ such a deduction does not hold. Indeed there could be integral smooth divisors $D$ such that $\cO_X(D)$ is even not ample. E.g. such an issue always occurs for ordinary instanton bundles on the blow up of $\p3$ at a point or along a line: see respectively \cite{C--C--G--M, Cs--Ge, A--C--G}. Nevertheless, in these cases ordinary earnest instanton bundles $\cE$ are characterized by a finite number of further {\sl exotic instantonic conditions}, i.e. vanishings $h^1\big(\cE(-q_X^\varepsilon h-D)\big)=0$ for a suitable divisor $D\subseteq X$. 

In the case of the non--ordinary instanton bundles obtained in the previous sections we can prove the following result.

\begin{proposition}
\label{pEarnest}
Let $X$ be a Fano threefold  with very ample $\cO_X(h)$ and let $\cE$ be any bundle whose existence is guaranteed by Theorems \ref{tExistence} and \ref{tPrime}.
\begin{enumerate}
\item If $X\not\cong F_7$, then $\cE$ is earnest.
\item If $X\cong F_7$, then $\cE$ is earnest if and only if $\dim(Z\cap E)=0$. Moreover, if $\cE$ is not earnest, then $h^1\big(\cE(-h-D)\big)=0$ for each smooth integral divisor $D\ne E$ and $h^1\big(\cE(-h-E)\big)=1$.
\end{enumerate}
\end{proposition}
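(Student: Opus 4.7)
The plan is to split the argument according to the Picard rank of $X$, since the two cases require fundamentally different techniques.

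\textbf{Step 1 ($\varrho_X=1$).} This covers every bundle from Theorem~\ref{tPrime} as well as the bundles of Theorem~\ref{tExistence} on $X\in\{\p3,Q,F_3,F_4,F_5\}$. Here any smooth integral divisor $D$ is linearly equivalent to $dh$ for some $d\ge 1$, and Lemma~\ref{lH12}(1) applied at $t=-q_X^\varepsilon-d\le-q_X^\varepsilon$ immediately yields $h^1\big(\cE(-q_X^\varepsilon h-D)\big)=0$, so $\cE$ is earnest.

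\textbf{Step 2 ($\varrho_X\ge 2$).} Only $X\in\{F_{6,2},F_{6,3},F_7\}$ occurs, with $i_X=2$, $\varepsilon=1$ and $q_X^\varepsilon=1$, so Sequence~\eqref{seqInstanton} becomes $0\to\cO_X(-h)\to\cE\to\cI_{Z\vert X}\to 0$. I plan to twist this sequence, and simultaneously Sequence~\eqref{seqStandard}, by $\cO_X(-h-D)$. Granted the auxiliary vanishings $h^i\big(\cO_X(-2h-D)\big)=h^i\big(\cO_X(-h-D)\big)=0$ for $i=0,1$, chasing both long exact sequences collapses to the key isomorphism
$$h^1\big(\cE(-h-D)\big)\cong h^0\big(\cO_Z(-h-D)\big).$$
When $D$ is nef the auxiliary vanishings are immediate from Kodaira, since $h+D$ and $2h+D$ are then ample. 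The delicate case is $D=E$ on $F_7$, which I would handle by writing $h+E=2\xi$ and $2h+E=3\xi+f$, running the Leray spectral sequence for $\pi\colon F_7\to\p2$, and reducing to $H^\bullet\big(\p2,\cO_{\p2}(-2)\big)$ and $H^\bullet\big(\cO_{F_7}(E)\big)$, both trivially zero in the relevant degrees (the latter via the sequence $0\to\cO_{F_7}\to\cO_{F_7}(E)\to\cO_{\p2}(-1)\to 0$ induced by $\cN_{E\vert F_7}\cong\cO_{\p2}(-1)$).

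\textbf{Step 3 (intersection-theoretic analysis).} Once the isomorphism is established, the rest is an analysis on each component $C\subseteq Z$: since $C\cong\p1$ one has $\cO_C(-h-D)\cong\cO_{\p1}(-2-C\cdot D)$, and $h^0\big(\cO_C(-h-D)\big)$ is nonzero precisely when $C\cdot D\le-2$. On $F_{6,2}$ and $F_{6,3}$ both smooth integral divisors and conics have nef classes (Remarks~\ref{rFlag} and~\ref{rSegre}), so $C\cdot D\ge 0$ and $\cE$ is earnest. On $F_7$, by Remark~\ref{rBlow}, integral conics split into classes $\xi f$ (with $C\cdot E=0$, never contained in $E$) and $2(\xi-f)f$ (contained in $E$, with $C\cdot E=-2$ via $\cN_{E\vert F_7}\cong\cO_{\p2}(-1)$). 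A base-locus analysis of $|\alpha\xi+\beta f|$---observing that for $\beta\le-1$ multiplication by the unique section $s_E^{-\beta}$ gives a bijection $H^0(\cO_{F_7}((\alpha+\beta)\xi))\to H^0(\cO_{F_7}(\alpha\xi+\beta f))$, forcing every element to contain $E$ with multiplicity $-\beta$---will pin down the only non-nef smooth integral divisor to be $E$ itself. Therefore $C\cdot D\le-2$ occurs exclusively for $D=E$ and $C\subseteq E$, producing exactly one section of $\cO_C(-h-E)$.

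Finally, B\'ezout forces two distinct smooth conics in $E\cong\p2$ to meet, while the components of $Z$ are pairwise disjoint by Construction~\ref{conInstanton}; hence $Z$ contains at most one conic of class $2(\xi-f)f$, giving $h^1\big(\cE(-h-E)\big)=1$ exactly when $\dim(Z\cap E)=1$ and $0$ otherwise. \emph{The main obstacle} will be the auxiliary vanishings on $F_7$ at $D=E$---where neither $h+E$ nor $2h+E$ is ample, ruling out direct appeals to Kodaira or Kawamata--Viehweg---together with the base-locus argument needed to classify the smooth integral divisors on $F_7$.
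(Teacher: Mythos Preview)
Your argument is correct and more self--contained than the paper's, but the route differs. The paper does not go through the Koszul sequence for the generic case: on $F_{6,2}$, $F_{6,3}$, and for nef $D$ on $F_7$ it simply observes that $h+D$ is ample, so Maruyama's restriction theorem \cite[Theorem 3.1]{Ma1} gives $\mu$--semistability of $\cE\otimes\cO_S$ for general $S\in|h+D|$, whence $h^0(\cE(-h)\otimes\cO_S)=0$ and the vanishing $h^1(\cE(-h-D))=0$ follows from Sequence~\eqref{seqMaruyama}. Only the single exceptional case $D=E$ on $F_7$ is handled via Sequences~\eqref{seqInstanton} and~\eqref{seqStandard}, with the auxiliary line--bundle vanishings quoted from \cite[Proposition 5.4]{C--C--G--M} and the classification of smooth divisors from \cite[Remark 5.7]{C--C--G--M}. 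Your uniform Koszul approach trades these citations for direct Kodaira and Leray computations, which is fine and arguably cleaner.

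One imprecision to fix: the vanishings you list, $h^i\big(\cO_X(-2h-D)\big)=h^i\big(\cO_X(-h-D)\big)=0$ for $i=0,1$, only give an \emph{injection} $H^1\big(\cE(-h-D)\big)\hookrightarrow H^0\big(\cO_Z(-h-D)\big)$. That suffices for earnestness on $F_{6,2}$, $F_{6,3}$, and $F_7$ with $D\ne E$, but for the ``only if'' direction and the exact value $h^1\big(\cE(-h-E)\big)=1$ on $F_7$ you also need surjectivity, which requires $h^2\big(\cO_{F_7}(-2h-E)\big)=0$. This is immediate from Serre duality, since $h^2\big(\cO_{F_7}(-3\xi-f)\big)=h^1\big(\cO_{F_7}(\xi-f)\big)=h^1\big(\cO_{F_7}(E)\big)$, and the latter vanishes by the very sequence $0\to\cO_{F_7}\to\cO_{F_7}(E)\to\cO_{\p2}(-1)\to 0$ you already invoke. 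Just add $i=2$ (for the $-2h-D$ term) to your list of auxiliary vanishings.
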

\begin{proof}
As pointed out above $\cE$ is automatically earnest if $\varrho_X=1$. Thus we have to prove the statement only when $X$ is one of the following threefolds: $F_{6,2}$, $F_{6,3}$, $F_7$.

The proof that $\cE$ is earnest when $X$ is either $F_{6,2}$ or $F_{6,3}$ is similar, hence we will deal with the first case in what follows. If $D\in \vert\alpha_1h_1+\alpha_2 h_2\vert\ne\emptyset$, then  $\alpha_1=Dh_2^2\ge0$ and $\alpha_2=Dh_1^2\ge0$. In particular $\cO_{F_{6,2}}(D)$ is globally generated, hence $\cO_{F_{6,2}}(h+D)$ is ample. It follows from \cite[Theorem 3.1]{Ma1} that $h^1\big(\cE(-h-D)\big)=0$ because $\cE$ is $\mu$--semistable (see the argument relating the earnest property with the $\mu$--semistability of the restriction of $\cE$ to any ample divisor). 

If $X\cong F_7$, then $\vert \alpha_1\xi+\alpha_2 f\vert$ contains a smooth $D$ if and only if either $\alpha_1=-\alpha_2=1$ or $\alpha_1,\alpha_2\ge0$ (see \cite[Remark 5.7]{C--C--G--M}). In the latter case $h+D$ is actually ample, hence $h^1\big(\cE(-h-D)\big)=0$ thanks to \cite[Theorem 3.1]{Ma1}. We deduce that $\cE$ is earnest if and only if $h^1\big(\cE(-h-E)\big)=0$. Since $h+E=2\xi$ and $2h+E=3\xi+f$, it follows from the cohomology of Sequences \eqref{seqInstanton} and \eqref{seqStandard} tensored by $\cO_{F_7}(-h-E)$ and \cite[Proposition 5.4]{C--C--G--M} that
$$
h^1\big(\cE(-h-E)\big)= h^1\big(\cI_{Z\vert F_7}(-h-E)\big)= h^0\big(\cO_{Z}(-h-E)\big).
$$
Notice that the dimension on the right is the sum of $h^0\big(\cO_{C}(-h-E)\big)$ as $C\cong\p1$ runs over the conics contained $Z$. If the class of $C$ in $A^2(F_7)$ is $\xi f$, then $(h+E)C=2$, hence $h^0\big(\cO_{C}(-h-E)\big)=0$. If the class of $C$ is $2(\xi-f)f$, then $(h+E)C=0$, hence $h^0\big(\cO_{C}(-h-E)\big)=1$. Since the components of $Z$ are pairwise disjoint, then at most one of them can be contained in $E\cong\p2$. We deduce that if $\dim(Z\cap E)=0$, then $h^0\big(\cO_{Z}(-h-E)\big)=0$, while if $\dim(Z\cap E)=1$, then $h^0\big(\cO_{Z}(-h-E)\big)=1$.
\end{proof}

We are interested in the splitting behaviour of instanton bundles when restricted to general lines. Indeed we finally prove also Theorem \ref{tTrivial} stated in the Introduction.

\medbreak
\noindent{\it Proof of Theorem \ref{tTrivial}.}
Assume that $\cE$ is obtained  via Construction \ref{conInstanton}.

Let us first consider the case $\varrho_X=1$. Since $\cE$ is $\mu$--stable, it follows that when $X\cong\p3$, i.e. $i_X=4$, then $\cE$ is generically trivial thanks to \cite[Corollary 1 of Theorem II.2.1.4]{O--S--S}. The same is true when $i_X=3$, thanks to \cite[Proposition 5.3]{C--F}. 

Now let $i_X=2$, $ \mathscr L\subseteq\Lambda(X)\times X$ the universal line and $\lambda\colon \mathscr L\to\Lambda(X)$  the projection to the first factor: the natural projection map $\chi\colon\mathscr L\to X$ is finite if $h^3\ge4$ (e.g. see \cite[Remark 2.2.7]{K--P--S}). If $h^3=3$ then it is generically finite and its fibres have positive dimension at  most in $30$ points (e.g. see \cite[p. 315 and Section 10]{Cl--Gr}). It follows that for each conic $C\subseteq X$, then $\lambda(\chi^{-1}(C))\subseteq\Lambda(X)$ is a union of a finite number of curves. Since $\Lambda(X)$ is a surface (e.g. see \cite[Propositions 3.5.6 and 3.5.8]{I--P}), it follows that the general line on $X$ does not intersect $C$. 

Thus the definition of $Z$ implies the existence of a line $L\cong \p1$ which does not intersect $Z$. It follows that the restriction of Sequence \eqref{seqInstanton} to $L$ induces a surjection $\cE\otimes\cO_L\twoheadrightarrow\cO_{\p1}$, hence $\cE\otimes\cO_L\cong\cO_{\p1}\oplus\cO_{\p1}(-1)$.

If $\varrho_X\ge2$ the argument is analogous and easier. Indeed, in this cases we know exactly the structure of $\Lambda(X)$ and $\Gamma(X)$ (see Remarks \ref{rFlag}, \ref{rSegre}, \ref{rBlow}), hence it is immediate to check that the general line does not intersect $Z$ in these cases.

Finally consider a bundle $\cE$ obtained as in Theorem \ref{tPrime}. Thanks to Lemma \ref{lConic}, we know that given general $Z,C\in \Gamma(X)_0$ the general line $L$ in each component of $\Lambda(X)$ does not intersect them. Restricting Sequence \eqref{seqInstanton} to $L$, we then deduce $\cE_2\otimes\cO_L\cong\cO_{\p1}^{\oplus2}$. Thus the restriction of Sequence \eqref{seqDeform} to the same line yields $\cE_{2,\varphi}\otimes\cO_L\cong\cO_{\p1}^{\oplus2}$ too. In particular every general deformation $\cE\in \cS_X\big(2;0, (\quantum+2)\ell,0\big)$  of $\cE_{2,\varphi}$ enjoys the same property by semicontinuity. 

If we now assume the existence of a generically trivial $\mu$--stable even instanton bundle $\cE_\quantum$ with quantum number $\quantum\ge4$, then the same argument as above yields the existence of a generically trivial $\mu$--stable even instanton bundle $\cE$ with quantum number $\quantum+2$, hence the statement is proved by induction on $\quantum$.
\qed
\medbreak

\section{Moduli spaces of non--ordinary instanton bundles}
\label{sModuli}
In this section we deal with the component of the moduli spaces of instanton bundles defined in the previous sections. 

Each instanton bundle $\cE$ obtained via Construction \ref{conInstanton} is $\mu$--stable, hence simple. Thus it represents a point in an open subset $\cS\cI_X\big(\varepsilon, \zeta)$ of $\cS_X\big(2; -\varepsilon h, \zeta,0\big)$ where $\zeta\in H^4(X)$ is such that $\zeta h=\quantum$.

\begin{proposition}
\label{pModuli}
Let $X$ be a Fano threefold with $i_X\ge2$ and very ample $\cO_X(h)$.

For each even integer $\quantum$ satisfying Inequality \eqref{BoundSharp} all the bundles obtained via Construction \ref{conInstanton} such that the class $\zeta\in H^4(X)$ of Z satisfies $\zeta h=\quantum$ represent smooth points in one and the same irreducible component $\cS\cI^0_X\big(\varepsilon, \zeta)\subseteq \cS\cI_X\big(\varepsilon, \zeta\big)$ with dimension 
$$
\dim(\cS\cI^0_X\big(\varepsilon, \zeta))=\left\lbrace\begin{array}{ll} 
8\quantum-5\quad&\text{if $i_X=4$,}\\
6\quantum-3\quad&\text{if $i_X=3$,}\\
4\quantum-h^3-3\quad&\text{if $i_X=2$.}
\end{array}\right.
$$
\end{proposition}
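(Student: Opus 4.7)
The proof decomposes naturally into three tasks: verifying smoothness of each bundle from Construction \ref{conInstanton} inside $\cS_X$, computing the local dimension, and checking that all such bundles sit in a single irreducible component. The deformation-theoretic framework is standard: a simple sheaf $\cE$ is a smooth point of $\cS_X$ precisely when $\Ext^2_X(\cE,\cE)=0$, in which case the local dimension equals $\dim_\bC\Ext^1_X(\cE,\cE)$. Since every $\cE$ produced by Construction \ref{conInstanton} is $\mu$--stable by Theorem \ref{tExistence} and hence simple, what remains is to prove the $\Ext^2$ vanishing, to convert it into the asserted dimensions, and to assemble a connected parameter space.

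For the dimension count (granting smoothness), I would combine $h^0(\cE\otimes\cE^\vee)=1$ from simplicity, $h^3(\cE\otimes\cE^\vee)=0$ from Lemma \ref{lExt3}, and the Riemann--Roch identity \eqref{Ext12}. Substituting $c_2(\cE)h=\quantum$ into $\dim\Ext^1-\dim\Ext^2 = 2i_X\quantum - i_X\varepsilon h^3/2 - 3$ and specializing to the non-ordinary triples $(i_X,\varepsilon)=(4,1),\,(3,0),\,(2,1)$ with the correct $h^3$ reproduces $8\quantum-5$, $6\quantum-3$, $4\quantum-h^3-3$.

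The heart of the proof is the $\Ext^2$ vanishing. My plan is to tensor Sequence \eqref{seqInstanton} by $\cE^\vee\cong\cE(\varepsilon h)$ and compute $H^2$ of the resulting short exact sequence. The left-hand contribution $H^2(\cE((1-q_X^\varepsilon)h))$ vanishes directly by Lemma \ref{lH12}(2), since $i_X-\varepsilon$ is odd and $1-q_X^\varepsilon \ge 1-q_X^\varepsilon$. For the right-hand contribution $H^2(\cE\otimes\cI_{Z\vert X}((q_X^\varepsilon-1+\varepsilon)h))$, I would use Sequence \eqref{seqStandard} tensored by $\cE((q_X^\varepsilon-1+\varepsilon)h)$: the ambient $H^2$ is killed by Serre duality combined with Lemma \ref{lH12}(1), while the remaining $H^1(\cE\otimes\cO_Z((q_X^\varepsilon-1+\varepsilon)h))$ is evaluated component by component on $Z=C_1\cup\dots\cup C_s$. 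On each conic $C_j\cong\p1$, the restriction of the Serre sequence \eqref{seqF} to $C_j$ (together with the Tor computation $\sExt^2_X(\cO_Z,\cO_{C_j})\cong\wedge^2\cN_{C_j\vert X}^\vee$) identifies $\cF\vert_{C_j}$ with $\cN_{C_j\vert X}^\vee((i_X-1)h)\vert_{C_j}$, and hence $\cE\vert_{C_j}$ with a twist that, once multiplied by $\cO_{C_j}((q_X^\varepsilon-1+\varepsilon)h)$, becomes $\cO_{\p1}(e_1)\oplus\cO_{\p1}(e_2)$ with $e_1+e_2=2i_X-2$. The needed $H^1$ vanishing then follows provided the conics are chosen with sufficiently balanced normal bundle ($e_1,e_2\ge -1$), which is the generic behaviour in each of the relevant components of $\Gamma(X)$.

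For irreducibility, I would build an incidence family $\cV$ whose points are pairs $(Z,[s])$ with $Z$ ranging over the (fixed) irreducible locus of the Hilbert scheme of disjoint unions of $s=(\quantum+2q_X^\varepsilon-2)/2$ integral conics that realize the prescribed class, and $[s]\in\bP(\Ext^1_X(\cI_{Z\vert X}((q_X^\varepsilon-1)h),\cO_X((1-q_X^\varepsilon-\varepsilon)h)))$. Constancy of $\dim\Ext^1$ over this base (via semicontinuity together with the vanishings used already) makes $\cV$ a projective bundle over an irreducible base, hence irreducible; the classifying map $\cV\to\cS\cI_X(\varepsilon,\zeta)$ has irreducible image, which by smoothness must be contained in one component $\cS\cI^0_X(\varepsilon,\zeta)$. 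The main obstacle is the step certifying the splitting type of $\cE\vert_{C_j}$: everything else is essentially bookkeeping of Serre duality and Lemma \ref{lH12}, but controlling the restriction to the components of $Z$ is where the geometric input on normal bundles of conics really enters.
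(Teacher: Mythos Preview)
Your approach matches the paper's almost step by step: reduce to $h^2(\cE\otimes\cE^\vee)=0$, tensor Sequence \eqref{seqInstanton} by $\cE(\varepsilon h)$, kill the two resulting contributions, then read off the dimension from \eqref{Ext12} and use the irreducible parameter space of configurations of conics for the ``same component'' claim. Two points deserve sharpening.

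First, the restriction step is simpler than you make it. By Equality \eqref{Normal} (with $S=0$) one has directly $\cF\otimes\cO_{C_j}\cong\cN_{C_j\vert X}$, hence $\cE((q_X^\varepsilon-1+\varepsilon)h)\otimes\cO_{C_j}\cong\cN_{C_j\vert X}$; no $\sExt$ or $\mathrm{Tor}$ computation is needed. The quantity whose $H^1$ must vanish is literally $h^1(\cN_{Z\vert X})$.

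Second, and more substantively, appealing to ``generic behaviour'' of the normal bundle is not enough: the statement asserts smoothness for \emph{all} bundles from Construction \ref{conInstanton}, i.e.\ for every choice of pairwise disjoint integral conics. The paper handles this explicitly. For $i_X\ge 3$ every conic $C$ is a complete intersection of hypersurfaces of degrees $1$ and $i_X-2$, so $\cN_{C\vert X}\cong\cO_{\p1}(2)\oplus\cO_{\p1}(2i_X-4)$. For $i_X=2$, a general hyperplane section $H\supset C$ gives a surjection $\cN_{C\vert X}\twoheadrightarrow\cN_{H\vert X}\vert_C\cong\cO_{\p1}(2)$, forcing the splitting type to be $(0,2)$ or $(1,1)$. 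In all cases $h^1(\cN_{C\vert X})=0$ for every integral conic, not just the generic one. Your outline is correct, but this is the place where you must supply a genuine argument rather than invoke genericity.

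For irreducibility the paper is terser than you: it simply observes that all the bundles lie in the image of the rational map $\Gamma(X)^{\times s}\dashrightarrow\cS_X(2;-\varepsilon h,\zeta,0)$, which, combined with smoothness, forces them into a single component. Your projective-bundle formulation is a valid refinement but not needed.
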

\begin{proof}
In order to prove the statement, it suffices to check that
\ref{conInstanton}
\begin{equation}
\label{dimModuli}
h^0\big(\cE\otimes\cE^\vee\big)=1,\qquad h^2\big(\cE\otimes\cE^\vee\big)=h^3\big(\cE\otimes\cE^\vee\big)=0,\\
\end{equation}
thanks to Equality \eqref{Ext12}.

To this purpose, since $\cE$ is simple, it follows that $h^0\big(\cE\otimes\cE^\vee\big)=1$ and, by Lemma \ref{lExt3}, also  $h^3\big(\cE\otimes\cE^\vee\big)=0$. It remains to check that $h^2\big(\cE\otimes\cE^\vee\big)=0$. We have
$$
h^i\big(\cE((1-q_X^\varepsilon)h)\big)=h^{3-i}\big(\cE(-q_X^\varepsilon h)\big).
$$
thanks to Equality \eqref{SerreDual}. In particular
\begin{gather*}
h^3\big(\cE((1-q_X^\varepsilon)h)\big)=h^0\big(\cE(-q_X^\varepsilon h)\big)\le h^0\big(\cE\big)=0,\\
h^2\big(\cE((1-q_X^\varepsilon)h)\big)=h^1\big(\cE(-q_X^\varepsilon h)\big)=0.
\end{gather*}
Thus the cohomology of Sequence \eqref{seqInstanton} tensored by $\cE^\vee\cong\cE(\varepsilon h)$ yields
$$
h^2\big(\cE\otimes\cE^\vee\big)=h^2\big(\cI_{Z\vert X}\otimes\cE((q_X^\varepsilon-1+\varepsilon)h)\big).
$$
The cohomology of Sequence \eqref{seqStandard} tensored by $\cE((q_X^\varepsilon-1+\varepsilon)h)$ implies
\begin{equation}
\label{BoundH2}
\begin{aligned}
h^2\big(\cI_{Z\vert X}\otimes\cE((q_X^\varepsilon-1+\varepsilon)h)\big)&\le h^2\big(\cE((q_X^\varepsilon-1+\varepsilon)h)\big)\\
&+h^1\big(\cO_Z\otimes\cE((q_X^\varepsilon-1+\varepsilon)h)\big).
\end{aligned}
\end{equation}

On the one hand, again the cohomologies of Sequences \eqref{seqInstanton} and \eqref{seqStandard} tensored by $\cE((q_X^\varepsilon-1+\varepsilon)h)$ and $\cO_{X}((i_X-1)h)$ respectively  imply
$$
h^2\big(\cE((q_X^\varepsilon-1+\varepsilon)h)\big)= h^2\big(\cI_{Z\vert X}((i_X-1)h)\big)\le h^1\big(\cO_{X}((i_X-1)h)\otimes\cO_Z\big).
$$
The latter dimension is zero because $Z$ is the disjoint union of rational curves. Thus the first righthand summand in Inequality \eqref{BoundH2} vanishes.

On the other hand,  for each integral conic $C\subseteq Z$, Equality \eqref{Normal} implies
$$
\cO_{C}\otimes\cE((q_X^\varepsilon-1+\varepsilon)h)\cong\cN_{C\vert X}\cong\cO_{\p1}(a_1)\oplus\cO_{\p1}(a_2)
$$
where $a_1\le a_2$ and $a_1+a_2=2i_X-2$ by adjunction on $X$. If $i_X\ge3$, then $C$ is the complete intersection of hypersurfaces of degrees $1$ and $i_X-2$, hence $(a_1,a_2)=(2,2i_X-4)$. Let $i_X=2$ and choose a general hyperplane section $H\subseteq X$ containing $C$. Thus $\cN_{H\vert X}\cong\cO_X(h)\otimes\cO_H$ and there exists a surjective morphism $\cN_{C\vert X}\to\cN_{H\vert X}$. It follows that $(a_1,a_2)$ is either $(0,2)$ or $(1,1)$. Since $Z$ is the disjoint union of conics in $X$, it follows that 
$$
h^1\big(\cO_Z\otimes\cE((q_X^\varepsilon-1+\varepsilon)h)\big)=h^1\big(\cN_{Z\vert X}\big)=0
$$
regardless of $i_X$. Thus the second righthand summand in Inequality \eqref{BoundH2} vanishes too and the proof of the Equalities  \eqref{dimModuli} is complete. 

The vanishing $h^2\big(\cE\otimes\cE^\vee\big)=0$ implies that $\cE$ in  $\cS_X\big(2; -\varepsilon h, \zeta,0\big)$ is smooth and lies in a component of dimension 
$\dim\Ext_X^1\big(\cE,\cE\big)\cong h^1\big(\cE\otimes\cE^\vee\big)$.
Moreover, $\cE$ is in the image of the natural rational map  $\Gamma(X)^{\times s}\dashrightarrow \cS_X\big(2; -\varepsilon h, \zeta,0\big)$. Thus all the bundles obtained via Construction \ref{conInstanton} lie in the same  component of $\cS\cI_X\big(\varepsilon, \quantum\big)$.
\end{proof}

When $\varrho_X=1$, each instanton bundle $\cE$ is automatically $\mu$--stable because $h^0\big(\cE\big)=0$. In \cite{M--M--PL, A--M} the authors proved the existence of ordinary instanton bundles which are not $\mu$--stable on $F_{6,2}$ and $F_{6,3}$. When $X\cong F_7$, each instanton bundle with quantum number $\quantum\le 14$ is $\mu$--stable (see \cite{C--C--G--M}).

The picture for non--ordinary instanton bundles is clearer.

\begin{proposition}
\label{pStable}
Let $X$ be a Fano threefold with $i_X\ge2$ and very ample $\cO_X(h)$.

Each non--ordinary instanton bundle on $X$ is $\mu$--stable if and only if $X\not\cong F_{6,2}$.
\end{proposition}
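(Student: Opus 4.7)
The plan is to treat the two implications of the biconditional separately, and in both cases the issue reduces to a short intersection--theoretic computation in the Picard lattice of $X$.

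For the only if direction, I would exhibit an explicit non--ordinary instanton bundle on $F_{6,2}$ that fails to be $\mu$--stable. Remark \ref{rSplit} already guarantees that $\cE:=\cO_{F_{6,2}}(-h_1)\oplus\cO_{F_{6,2}}(-h_2)$ is an instanton bundle with $\quantum=2$; its first Chern class is $-h$ and $i_X-\varepsilon=1$, so it is non--ordinary. Using $h_1^2-h_1h_2+h_2^2=0$ and $h_i^3=0$ one computes $h^2=3h_1h_2$ and $h_1^2h_2=h_1h_2^2=1$, so that $h_ih^2=3=-\mu(\cE)$; each summand therefore has slope $\mu(\cE)$, which obstructs $\mu$--stability.

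For the if direction, assume $X\not\cong F_{6,2}$. The strategy is to show that $\cE$ admits no rank--$1$ subsheaf of slope $\mu(\cE)$, so that the $\mu$--semistability built into Definition \ref{dTwistedInstanton} automatically upgrades to $\mu$--stability. When $\varrho_X=1$, every divisor is of the form $dh$, and $dh^3\ge\mu(\cE)=-\varepsilon h^3/2$ forces $d\ge0$. The vanishing $h^0(\cE)=0$ holds: when $\varepsilon=0$ by Definition \ref{dTwistedInstanton} itself, and when $\varepsilon=1$ because $\mu(\cE)<0$ together with $\mu$--semistability rules out any injection $\cO_X\to\cE$. Thus $h^0(\cE(-dh))=0$ for every $d\ge0$, and Lemma \ref{lHoppe} yields $\mu$--stability.

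When $\varrho_X\ge2$, the classification recalled at the beginning of Section \ref{sSharp} leaves only $X\in\{F_{6,3},F_7\}$; in both cases $i_X=2$ and non--ordinariness forces $\varepsilon=1$, so $\mu(\cE)=-h^3/2$. Assume by contradiction that $\cE$ is strictly $\mu$--semistable: Lemma \ref{lSimple} then supplies a divisor $D$ on $X$ with $Dh^2=-h^3/2$. On $F_{6,3}$, writing $D=\sum_i\alpha_ih_i$, the relations $h_i^2=0$ and $h_1h_2h_3=1$ force $Dh^2=2\sum_i\alpha_i\in2\bZ$, which cannot equal $-h^3/2=-3$. On $F_7$, writing $D=\alpha\xi+\beta f$, the relations $\xi^2=\xi f$, $f^3=0$, $\xi f^2=1$ give $Dh^2=4\alpha+3\beta\in\bZ$, which cannot equal $-h^3/2=-7/2$. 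In either case we obtain the desired contradiction. The only real work is the intersection--theoretic bookkeeping on the three Fano threefolds with $\varrho_X\ge2$; once that is in place the integrality obstruction in the Picard lattice does the rest, and no deeper difficulty arises.
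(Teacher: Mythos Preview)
Your proof is correct, and the ``if'' direction is essentially identical to the paper's: the same integrality obstruction in the Picard lattice of $F_{6,3}$ and $F_7$ rules out any divisor $D$ with $Dh^2=\mu(\cE)$.

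The ``only if'' direction, however, is handled quite differently. You invoke the decomposable bundle $\cO_{F_{6,2}}(-h_1)\oplus\cO_{F_{6,2}}(-h_2)$ from Remark \ref{rSplit}, which is certainly a non--ordinary instanton that is strictly $\mu$--semistable, and this suffices for the bare statement of the proposition. The paper instead carries out a substantially longer construction: it builds, for every even $\quantum\ge2$, an \emph{indecomposable} strictly $\mu$--semistable non--ordinary instanton on $F_{6,2}$ with quantum number $\quantum$, via Serre correspondence applied to a disjoint union of double structures of arithmetic genus $-2$ supported on lines of class $h_1^2$. This requires checking that such ribbons are locally complete intersection, computing their canonical bundle, and verifying the instantonic vanishing. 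The payoff is that the paper obtains a whole family of counterexamples (used in the discussion immediately following the proposition and in the subsequent remark on minimal instantons), whereas your argument gives only the single decomposable one with $\quantum=2$. For the proposition as stated, your shortcut is entirely adequate and considerably more economical.
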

\begin{proof}
If $\varrho_X=1$ each instanton bundle is $\mu$--stable, hence we have to deal only with the case $\varrho_X\ge2$. To this purpose, thanks to Lemma \ref{lHoppe}, it suffices to look at the divisors $D$ such that $\mu(\cO_X(D))=\mu(\cE)$, when $X$ is $F_{6,2}$, $F_{6,3}$, $F_7$. 

Since $\mu(\cE)$ is either $-3$ or $-7/2$ according $X$ is either $F_{6,2}$ and $F_{6,3}$ or $F_7$ respectively, it follows that we can restrict to the two cases $F_{6,2}$ and $F_{6,3}$. In the latter case, using the notation of Remark \ref{rSegre}, we know that $h^2=2(h_2h_3+h_1h_3+h_1h_2)$, hence $\mu(\cO_X(D))$ is certainly even.

Thus, the only case we have to deal with is $X:=F_{6,2}$. In this case, we claim the existence of a strictly $\mu$--semistable non--ordinary instanton $\cE$. To this purpose, thanks to Lemma \ref{lHoppe}, it suffices to show that $h^0\big(\cE(-\alpha_1h_1-\alpha_2h_2)\big)=0$ for each $\cO_{F_{6,2}}(\alpha_1h_1+\alpha_2h_2)\subseteq\cE$ such that $3(\alpha_1+\alpha_2)=\mu(\cO_{F_{6,2}}(\alpha_1h_1+\alpha_2h_2))>\mu(\cE)=-3$, i.e. $\alpha_1+\alpha_2\ge0$, and that $h^0\big(\cE(h_1)\big)>0$.

In order to prove the claim, recall that $F_{6,2}$  can be also viewed as the incidence variety $\{\ (p,L)\ \vert\ p\in L\ \}\subseteq\p2\times(\p2)^\vee$. If we fix coordinates $u_0,u_1,u_2$ on $\p2$ and we denote by $v_0,v_1,v_2$ the dual coordinates in $(\p2)^\vee$, then the $F_{6,2}$ coincides with intersection of the image of the Segre map $\p2\times(\p2)^\vee\to\p8$ defined by
\begin{align*}
(x_0,\dots,x_8)=(u_0v_0,u_0v_1,u_0v_2,u_1v_0,u_1v_1,u_1v_2,u_2v_0,u_2v_1,u_2v_2)
\end{align*}
with the hyperplane $H:=\{\ x_0+x_4+x_8=0\ \}$ corresponding the incidence relation $p\in L$ translated analytically as $u_0v_0+u_1v_1+u_2v_2=0$. Thus the ideal of $F_{6,2}$ inside $H\cong\p7$ with coordinates $x_0,\dots,x_7$ is generated by the $2\times2$--minors of the matrix
$$
\left(\begin{array}{ccc}
x_0&x_1&x_2\\
x_3&x_4&x_5\\
x_6&x_7&-x_0-x_4
\end{array}\right).
$$

If $L\subseteq F_{6,2}$ is a line with class $h_1^2$, then it is the fibre of the projection map on the $u$--plane $\p2$. It follows that up to an automorphism of $F_{6,2}$ we can assume that $L:=\{\ x_0=\dots=x_5=0\ \}$. Thanks to \cite[Theorem 2.4]{N--N--S1} (or using any software for symbolic computation) one checks that the ideal
$$
I_C:=(x_0,\dots,x_5)^2+(x_2,x_5,x_0-x_4,x_1x_6-x_0x_7,x_4x_6-x_3x_7)
$$
defines a scheme $C\subseteq F_{6,2}$ with $\deg(C)=2$ and $p_a(C)=-2$.  The curve $C$ is obtained by doubling the line $L$ inside the  surface $S$ with ideal
$$
I_S:=(x_2,x_5,x_0-x_4,x_1x_6-x_0x_7,x_4x_6-x_3x_7).
$$

It is easy to check that $I_C+I_{F_{6,2}}=(x_1^2,x_3^2)+I_S+I_{F_{6,2}}$ and $x_4^2\in I_S+I_{F_{6,2}}$. It follows that the localization of $I_C+I_{F_{6,2}}$ at the prime ideals $(x_7)$ and $(x_6)$ coincides with the localizations of the ideals $(x_1^2)+I_S+I_{F_{6,2}}$ and $(x_3^2)+I_S+I_{F_{6,2}}$ respectively. Thus $C$ is locally complete intersection inside $F_{6,2}$ because $S$ is smooth at the points of $L$.

Moreover, each Cartier divisor on $C$ is completely identified by its degree which is twice the degree of the same divisor restricted to $L$: see \cite[Propositions 4.1 and 4.2]{B--E}. Since $\deg(\omega_{C})=-6$ and $\cO_{F_{6,2}}(-h_1-3h_2)\otimes\cO_L\cong\cO_{\p1}(-3)$, it follows that $\omega_{C}\cong \cO_{F_{6,2}}(-h_1-3h_2)\otimes\cO_{C}$. Thus
$$
\det(\cN_{C\vert X})\cong\omega_{C}\otimes\cO_{F_{6,2}}(2h)\cong \cO_{F_{6,2}}(h_1-h_2)\otimes\cO_{C},
$$
by adjunction. Since $\cO_{F_{6,2}}(h_1)$ is globally generated and $h_1^3=0$, it follows that
$$
\cO_{F_{6,2}}(-h_1-2h_2)\otimes\cO_C\cong\cO_{F_{6,2}}(-2h)\otimes\cO_C.
$$
Thus \cite[Theorem 2.1]{C--G--N} yields $h^0\big(\cO_{F_{6,2}}(-2h)\otimes\cO_C\big)=h^1\big(\cI_{C\vert\p7}(-2h)\big)=0$.

Let $Z$ be the union of $s:=(k-2)/2$ pairwise disjoint schemes of degree $2$ and genus $-2$ in $F_{6,2}$ supported on lines in class $h_1^2$ as above. The above discussion and Theorem \ref{tSerre} yield the existence of a vector bundle $\cE$ fitting into the exact sequence 
$$
0\longrightarrow\cO_{F_{6,2}}(-h_1)\longrightarrow\cE\longrightarrow\cI_{Z\vert F_{6,2}}(-h_2)\longrightarrow0.
$$
It is immediate to check that $c_1(\cE)=-h$, $hc_2(\cE)=\quantum$, $h^0\big(\cE\big)=0$ and $h^1\big(\cE(-h)\big)=h^1\big(\cI_{Z\vert F_{6,2}}(-h_1-2h_2)\big)$. Thus
$$
h^1\big(\cI_{Z\vert F_{6,2}}(-h_1-2h_2)\big)=h^0\big(\cO_{F_{6,2}}(-h_1-2h_2)\otimes\cO_Z\big)=sh^0\big(\cO_{F_{6,2}}(-2h)\otimes\cO_C\big)=0,
$$
whence we finally deduce $h^1\big(\cE(-h)\big)=0$. Moreover, $h^0\big(\cE(-\alpha_1h_1-\alpha_2h_2)\big)=0$ if $\alpha_1+\alpha_2\ge0$ and $h^0\big(\cE(h_1)\big)=1$. Thus the claim is proved and the proof of the proposition is complete.
\end{proof}

If $\quantum=2$, then  the bundle constructed in the proof of Proposition \ref{pStable} splits as $\cO_{F_{6,2}}(-h_1)\oplus\cO_{F_{6,2}}(-h_2)$: hence the above  construction extends Remark \ref{rSplit} to higher quantum numbers. Moreover, it is possible to show that $h^2\big(\cE\otimes\cE^\vee\big)=0$ regardless of the value of $\quantum$.

In order to conclude the analysis in the case $i_X\ge2$, we deal below with non--ordinary instanton bundles with low quantum numbers  and their moduli spaces. Recall that the minimal non--ordinary instanton bundles are the ones with quantum number $\quantum=2$ when $\varrho_X=1$ and $\quantum=4$ when $\varrho_X\ge2$ (see Theorems \ref{tSharp} and \ref{tExistence}). 

\begin{remark}
\label{rMinimalIndex1}
If $\varrho_X=1$, then each instanton bundle is automatically $\mu$--stable. The moduli space $\cS\cI_X\big(\varepsilon, \quantum\ell\big)$ is the open non--empty subset of bundles $\cE$ such that $h^1\big(\cE(-q_X^\varepsilon h)\big)=0$ inside the Maruyama moduli space $\cM_X(2;-\varepsilon h,\quantum\ell)$ of $\mu$--stable rank $2$ bundles (see \cite{Ma2} and \cite{Ma3}).

If $i_X=4,3,2$ and $\varepsilon=1,0,1$ respectively, then the space $\cM_X(2;-\varepsilon h,2\ell)$, hence $\cS\cI_X\big(\varepsilon, 2\big)$, is irreducible and smooth: moreover it is also rational (see \cite[Theorem 3.1]{H--S}, \cite[Theorem 2.1]{O--S} and \cite{S--W}  for the proof of this facts in the cases $i_X=4,3,2$ respectively). In particular $\cS\cI_X(\varepsilon, 2\ell\big)=\cS\cI_X^0(\varepsilon, 2\ell\big)$ in these cases and it actually coincides with $\cM_{X}(2;-h,2\ell)$ when $i_X\ne3$ (see \cite[Proposition 2.3]{H--S} and \cite[Proposition 2.1]{S--W}).

If $X=\p3$, then the moduli space $\cM_{\p3}(2;-h,4\ell)$ has been described in \cite[Theorems 1 and 2]{B--M}. More precisely (see \cite[Section 1.1]{B--M}) such a moduli space has two components, both rational. One of them is smooth of dimension $28$ and its points represent bundles $\cE$ such that $h^1\big(\cE(-2h)\big)=1$, thus it does not contain any point corresponding to an instanton bundle. The second component has dimension $27$  and its points represent bundles $\cE$ such that $h^1\big(\cE(-2h)\big)=0$: it follows that it coincides with $\cS\cI_{\p3}(1,4\ell)=\cS\cI_{\p3}^0(1,4\ell\big)$. More generally in \cite{Tik} the author defines a component $\cM\subseteq \cM_{\p3}(2;-h,\quantum\ell)$ for each even $\quantum$, shows that it is different from $\cS\cI_{\p3}^0(1,\quantum\ell\big)$ for $\quantum=6$ and conjectures that this is true for all $\quantum\ge6$.

If $X=Q$, then $\cM_Q(2;0,4\ell)$, hence $\cS\cI_Q\big(0,4\ell\big)$, is irreducible, reduced and unirational (see \cite[Theorem 3.4]{O--S}). It follows that $\cS\cI_Q(0,4\ell\big)=\cS\cI_Q^0(0,4\ell\big)$.

When $\varrho_X\ge2$, then $h^3\ge6$ and minimal non--ordinary instanton bundle must have quantum number $4$. We showed in Theorem \ref{tExistence} that such bundles exist. 

We have $h^2\big(\cE(h)\big)=0$ for such an $\cE$ by Lemma \ref{lH12}. Thus Equality \eqref{RRGeneral} implies  $h^0\big(\cE(h)\big)\ge h^3-3\ge3$. If $s\in H^0\big(\cE(h)\big)$ is a non--zero section whose zero--locus is a curve $Z$, then $\deg(Z)=hc_2(\cE(h))=hc_2(\cE)=4$. Equality \eqref{Normal} implies
$$
\det(\cN_{Z\vert X})\cong\det(\cE(h))\otimes\cO_Z\cong \cO_X(h)\otimes\cO_Z,
$$
hence the adjunction formula yields
$$
\omega_Z\cong\omega_X\otimes\cO_X(h)\otimes\cO_Z\cong\cO_X(-h)\otimes\cO_Z.
$$
Thus $p_a(Z)=-1$.
\end{remark}

We conclude the analysis of the case $i_X\ge2$ by characterizing minimal instanton bundles on $F_{6,2}$ which are not $\mu$--semistable as the ones associated to double structures $Z$ with $p_a(Z)=-2$ on lines inside $ F_{6,2}$.

\begin{remark}
In what follows we will use the notation of Remark \ref{rFlag}. It is easy to see that the condition $\mu(\cO_{F_{6,2}}(D))=\mu(\cE)$ forces the existence of an integer $\alpha$ such that $D=\alpha h_1-(1+\alpha)h_2$. Let $s\in H^0\big(\cO_{F_{6,2}}(-D)\big)$ be a non--zero section. If the zero--locus of $s$ contains a surface $S$, then
$$
\deg(S)=Sh^2=\mu(\cO_{F_{6,2}}(S))\le \mu(\cE(-D))=\mu(\cE)-\mu(\cO_{F_{6,2}}(D))=0.
$$
We deduce that $(s)_0=Z$ is a curve with degree $c_2(\cE(-D))h$. One easily checks that
$c_2(\cE(-D))h=2(1-\alpha-\alpha^2)$ which is negative, unless $\alpha\in\{\ -1,0\ \}$, i.e. $D=h_i$. Thus we deduce $\deg(Z)=2$. 

Let $\beta_1h_2^2+\beta_2 h_1^2$ be the class of $Z$. Since $\beta_i=Zh_i\ge0$ and $\beta_1+\beta_2=\deg(Z)=2$, it follows that the possible values of $(\beta_1,\beta_2)$ are $(2,0)$, $(1,1)$ and $(0,2)$. Let $D=h_1$: Equality \eqref{Normal} and the adjunction formula imply $2p_a(Z)-2=(-h_1-3h_2)Z$. Thus $p_a(Z)$ is $-2$, $-1$ and $0$ respectively in the three cases.

If the class of $Z$ is $2h_2^2$, then $p_a(Z)=0$, hence $Z$ would be a conic supported on a line. We can exclude this case as in Remark \ref{rFlag}.

If the class of $Z$ is $h_1^2+h_2^2$, then $p_a(Z)=-1$. In this case $Z$ is either the union of two disjoint lines, or a double structure on a line. The latter case cannot occur because $h_1^2+h_2^2$ is not divisible in $A^2(F_{6,2})$, hence $Z$ is the union of lines $L_i$ with class is $h_i^2\in A^2(F_{6,2})$. Equality \eqref{Normal} implies that
$$
\det(\cN_{L_i\vert F_{6,2}})\cong\det(\cE(h_1))\otimes\cO_{L_i}\cong\det(\cO_{F_{6,2}}(h_1-h_2))\otimes\cO_{L_i}
$$
because $L_1\cap L_2=\emptyset$. Thus this case cannot occur too, because $(h_1-h_2)h_1^2=-1$ and $\cN_{L_1\vert F_{6,2}}\cong\cO_{\p1}^{\oplus2}$ as shown above.

It follows that the class of $Z$ is $2h_1^2$ and $p_a(Z)=-2$. Thus $Z$ is necessarily a double structure on a line $L$ with class $h_1^2$ and Sequence \eqref{seqSerre} becomes
$$
0\longrightarrow\cO_{F_{6,2}}\longrightarrow\cE(h_1)\longrightarrow\cI_{Z\vert F_{6,2}}(h_1-h_2)\longrightarrow0,
$$
hence $h^0\big(\cE(h_1)\big)=1$, i.e. the curve $Z$ completely identifies $\cE$. 

Tensoring the above sequence by $\cO_{F_{6,2}}(h_2-h_1)$, one deduces $h^0\big(\cE(h_2)\big)=0$. Thus if we make a similar construction with $h_2$ instead of $h_1$, we obtain a different family of $\mu$--semistable minimal instanton bundles.
\end{remark}

The instanton bundles obtained in Theorem \ref{tPrime} are $\mu$--stable, hence simple. Thus they represent points in an open subset $\cS\cI_X\big(\varepsilon, \quantum\ell)\subseteq\cS_X\big(2; -\varepsilon h, \quantum \ell,0\big)$.

Theorem \ref{tPrime} is less explicit than Construction \ref{conInstanton}. Thus we have no way to confront the bundles constructed in Section \ref{sPrime} in order to show that they lie in the same component. Nevertheless, at least the following statement holds.

\begin{proposition}
\label{pModuliPrime}
Let $X$ be an ordinary prime Fano threefold  with very ample $\cO_X(h)$.

For each even integer $\quantum$ satisfying Inequality \eqref{BoundSharp} there exists an earnest instanton bundle $\cE$ with quantum number $\quantum$ representing a smooth point in an irreducible component $\cS\cI^0_X\big(\varepsilon, \quantum\ell)\subseteq \cS\cI_X\big(\varepsilon, \quantum\ell\big)$ with dimension 
$$
\dim(\cS\cI^0_X\big(\varepsilon, \quantum\ell))=2\quantum-3.
$$
\end{proposition}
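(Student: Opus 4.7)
The plan is to build the bundle as in Theorem \ref{tPrime}, read off smoothness and the dimension of the ambient component from the computation of $\Ext^\bullet_X(\cE,\cE)$ using Equality \eqref{Ext12}, and check earnestness directly from the general discussion opening Section \ref{sEarnest}.

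First, by the inductive construction in the proof of Theorem \ref{tPrime} (combined with Proposition \ref{pDeform}), for every even $\quantum$ satisfying Inequality \eqref{BoundSharp}---i.e. $\quantum\geq 4$ when $i_X=1$---there is a $\mu$--stable even instanton bundle $\cE$ with quantum number $\quantum$ satisfying $\Ext^2_X\big(\cE,\cE\big)=0$. Being $\mu$--stable, $\cE$ is simple, hence $h^0\big(\cE\otimes\cE^\vee\big)=1$; moreover $\Ext^3_X\big(\cE,\cE\big)=0$ by Lemma \ref{lExt3}. The triple information then guarantees that $[\cE]$ is a smooth point of the ambient moduli space, so the irreducible component $\cS\cI^0_X\big(0,\quantum\ell\big)\subseteq \cS\cI_X\big(0,\quantum\ell\big)$ containing it has dimension $\dim\Ext^1_X\big(\cE,\cE\big)$. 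Equality \eqref{Ext12} specialised to $i_X=1$, $\varepsilon=0$, $c_2(\cE)\cdot h=\quantum$ then gives
$$\dim\cS\cI^0_X\big(0,\quantum\ell\big)=\dim\Ext^1_X\big(\cE,\cE\big)=2\quantum-3.$$

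For earnestness, we must check $h^1\big(\cE(-h-D)\big)=0$ for every smooth integral divisor $D\subseteq X$ (since $q_X^\varepsilon=1$ in this case). Because $\varrho_X=1$, $\cO_X(D)\cong\cO_X(dh)$ for some $d\geq1$, and in particular $\cO_X(D)$ is very ample. Tensoring the standard sequence \eqref{seqMaruyama} by $\cE(-h)$ and using the instantonic vanishing $h^1\big(\cE(-h)\big)=0$ together with $h^0\big(\cE(-h)\big)\leq h^0\big(\cE\big)=0$, one reduces the question to the vanishing of $h^0\big(\cE(-h)\otimes\cO_D\big)$. The latter follows because $\cE\otimes\cO_D$ is $\mu$--semistable by \cite[Theorem 3.1]{Ma1}, so $\cE(-h)\otimes\cO_D$ is $\mu$--semistable of strictly negative slope on $D$ and has no sections. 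This is exactly the argument recorded at the opening of Section \ref{sEarnest}, applied to the present parameters.

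The only delicate input is the vanishing $\Ext^2_X\big(\cE,\cE\big)=0$, but this is precisely what the deformation argument of Proposition \ref{pDeform} provides; once it is granted, the remainder of the proof is a routine assembly of Lemma \ref{lExt3}, Equality \eqref{Ext12}, and the standard Maruyama restriction argument for earnestness.
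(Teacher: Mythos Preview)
Your proof is correct and follows essentially the same approach as the paper. The paper's own proof is extremely terse (two sentences): it invokes the construction in Theorem \ref{tPrime} to get a $\mu$--stable instanton bundle $\cE$ with $\Ext^2_X(\cE,\cE)=0$, then reads off $\dim\Ext^1_X(\cE,\cE)=2\quantum-3$ from Equality \eqref{Ext12}; the earnestness claim is left implicit, since it was already established for any instanton bundle on a threefold with $\varrho_X=1$ in the discussion opening Section \ref{sEarnest} and in the proof of Proposition \ref{pEarnest}. You have simply spelled out all of these ingredients explicitly, including the simplicity, the vanishing of $\Ext^3$ via Lemma \ref{lExt3}, and the Maruyama restriction argument for earnestness.
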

\begin{proof}
As pointed out in the proof of Theorem \ref{tPrime} we can construct a $\mu$--stable instanton bundle $\cE$ with $\Ext^2_{X}\big(\cE,\cE\big)=0$ and quantum number $\quantum$. Thanks to Equality \eqref{Ext12} we obtain $\dim\Ext^1_{X}\big(\cE,\cE\big)=2\quantum-3$.
\end{proof}

\section{Monads for non--ordinary instanton bundles when $i_X\ge3$}
\label{sMonad}
In this section we prove the existence of monads for the bundles we are interested in when $X$ is either $\p3$ or $Q$.

\begin{proposition}
\label{pMonad}
Each odd instanton bundle $\cE$ on $\p3$ with quantum number $\quantum$ is the cohomology of a monad $\cC^\bullet$ of the form
\begin{equation}
\label{Monad}
0\longrightarrow (H\otimes\cO_{\p3}(h))^\vee\mapright{u\psi^\vee(-h)}(K\otimes\Omega_{\p3}(2h))^\vee\oplus K\otimes\Omega_{\p3}(h)\mapright{\psi}H\otimes\cO_{\p3}\longrightarrow0,
\end{equation}
where $H$ and $K$ are vector spaces with
$$
\dim(H)=\frac{3\quantum}2-1,\qquad \dim(K)=\frac{\quantum}2,
$$
and
$$
u\colon (K\otimes\Omega_{\p3}(2h)\oplus (K\otimes\Omega_{\p3}(h))^\vee)(-h)\to (K\otimes\Omega_{\p3}(2h))^\vee\oplus K\otimes\Omega_{\p3}(h)
$$
is skew--symmetric.

Conversely, if the cohomology $\cE$ of the monad $\cC^\bullet$ is a vector bundle, then $\cE$ is an odd instanton bundle with $c_2(\cE)=\quantum$ on $\p3$.
\end{proposition}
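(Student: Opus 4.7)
The plan is to extract the monad from Beilinson's spectral sequence applied to $\cE$, and then verify the converse by direct cohomological computations on $\cC^\bullet$.

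First, I would assemble the full cohomology table of $\cE$. Proposition \ref{pNatural} (with $i_X=4$, $\varepsilon=1$, $q_X^\varepsilon=2$) gives $h^i\big(\cE(ph)\big)=0$ for $-3\le p\le 0$ except at the four positions $(p,i)\in\{(0,1),(-1,1),(-2,2),(-3,2)\}$, and the condition $c_1(\cE)=-h$ gives the self-duality $\cE^\vee\cong\cE(h)$ coming from the skew pairing $\cE\otimes\cE\to\det\cE=\cO_{\p3}(-h)$. Serre duality then identifies
$$ H^2\big(\cE(-3h)\big)\cong H^1(\cE)^\vee=:H^\vee,\qquad H^2\big(\cE(-2h)\big)\cong H^1\big(\cE(-h)\big)^\vee=:K^\vee, $$
and Riemann--Roch \eqref{RRGeneral} applied to $\cE$ and $\cE(-h)$ returns $\dim H=\tfrac{3\quantum}{2}-1$ and $\dim K=\tfrac{\quantum}{2}$, matching the statement.

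Second, I would apply the Beilinson spectral sequence
$$ E_1^{p,q}=H^q\big(\cE(ph)\big)\otimes\Omega_{\p3}^{-p}(-p)\ \Longrightarrow\ \cE\text{ in degree }p+q=0,\qquad -3\le p\le 0, $$
which by the vanishings above reduces to exactly two nonzero rows: row $q=2$ with entries $H^\vee\otimes\cO_{\p3}(-h)$ and $K^\vee\otimes\Omega^2_{\p3}(2h)$ (using $\Omega^3_{\p3}(3h)\cong\cO_{\p3}(-h)$), and row $q=1$ with $K\otimes\Omega_{\p3}(h)$ and $H\otimes\cO_{\p3}$. The identifications $\Omega^2_{\p3}(2h)\cong(\Omega_{\p3}(2h))^\vee$ and $H^\vee\otimes\cO_{\p3}(-h)\cong(H\otimes\cO_{\p3}(h))^\vee$ place these terms in the shape of \eqref{Monad}, and a standard two-row argument assembles the $d_1$-differentials together with the $d_2$-connecting maps from row $q=2$ to row $q=1$ into a three-term complex $\cC^\bullet$ with middle cohomology $\cE$. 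The skew-symmetry of $u$ will then follow from the naturality of the construction under $\cE\cong\cE^\vee(-h)$, which induces an isomorphism of $\cC^\bullet$ with its shifted dual $(\cC^\bullet)^\vee(-h)$ swapping the two middle summands; the off-diagonal component, which is exactly $u$, inherits skew-symmetry from the skew pairing on $\cE$.

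For the converse, given a monad $\cC^\bullet$ of the stated form whose middle cohomology $\cE$ is locally free, additivity of Chern characters together with the Euler sequence for $\Omega_{\p3}(jh)$ yields $\rk(\cE)=2$, $c_1(\cE)=-h$ and $c_2(\cE)\cdot h=\quantum$. Splitting $\cC^\bullet$ into its two display short exact sequences and using the Bott formulas for $H^\bullet\big(\Omega^i_{\p3}(jh)\big)$ gives $h^0(\cE)=0$ and $h^1\big(\cE(-2h)\big)=0$; the former combined with Lemma \ref{lHoppe} forces $\mu$--semistability, so $\cE$ is an odd instanton bundle with the prescribed quantum number. The main obstacle will be the skew-symmetry claim: matching the symplectic self-duality of $\cE$ canonically with the Beilinson construction, and pinning down the signs that make the induced middle automorphism skew rather than symmetric, is the only ingredient that is not pure bookkeeping once the cohomology table is fixed.
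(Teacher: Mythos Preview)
Your proposal is correct and follows essentially the same route as the paper: Beilinson applied to the same cohomology table, the same identifications $\Omega^t_{\p3}(4h)\cong(\Omega^{3-t}_{\p3})^\vee$ to put the terms in self-dual form, and the same display-sequence argument for the converse. The one point you flag as delicate---lifting the skew isomorphism $\cE\cong\cE^\vee(-h)$ to a skew isomorphism of monads---is exactly what the paper does not do by hand either: it invokes \cite[Lemma II.4.1.3]{O--S--S} (and the discussion after its Corollary~2), which says that under mild $\Hom$-vanishings between the terms of the monad an isomorphism of cohomologies lifts to an isomorphism of monads, and then tracks the sign to get skew-symmetry of the middle map.
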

\begin{proof}
If $\cE$ is an odd instanton bundle on $\p3$, then it  is the cohomology in degree $0$ of a complex ${\cC}^\bullet$ with $i^{th}$--module
$$
{\cC}^i:=\bigoplus_{p+q=i}H^q\big(\cE(ph)\big)\otimes \Omega^{-p}_{\p3}(-ph)
$$
where, as usual, $\Omega^t_{\p3}:=\wedge^t \Omega_{\p3}$, thanks to \cite[Beilinson's Theorem (Strong form)]{A--O1}. Our first task is to prove that $h^{q}\big(\cE(ph)\big)$ is the number in position $(p,q)$ in table 2.
\begin{table}[H]
\label{TableEpq}
\centering
\bgroup
\def\arraystretch{1.5}
\begin{tabular}{cccccc}
\cline{2-5}
\multicolumn{1}{c}{\phantom{000000}} &\multicolumn{1}{|c|}{0} & \multicolumn{1}{c|}{0} & \multicolumn{1}{c|}{0} & \multicolumn{1}{c|}{0} & $q=3$ \\ 
\cline{2-5}
\multicolumn{1}{c}{} &\multicolumn{1}{|c|}{$\frac{3\quantum}2-1$} & \multicolumn{1}{c|}{$\frac{\quantum}2$} & \multicolumn{1}{c|}{0} & \multicolumn{1}{c|}{0} & $q=2$ \\ 
\cline{2-5}
\multicolumn{1}{c}{} &\multicolumn{1}{|c|}{0} & \multicolumn{1}{c|}{0} & \multicolumn{1}{c|}{$\frac{\quantum}2$} & \multicolumn{1}{c|}{$\frac{3\quantum}2-1$} & $q=1$ \\ 
\cline{2-5}
\multicolumn{1}{c}{} &\multicolumn{1}{|c|}{0} & \multicolumn{1}{c|}{0} & \multicolumn{1}{c|}{0} & \multicolumn{1}{c|}{0} & $q=0$ \\ 
\cline{2-5}
&$p=-3$ & $p=-2$ & $p=-1$ & $p=0$
\end{tabular}
\egroup
\caption{The values of $h^{q}\big(\cE(ph)\big)$ for $\p3$}
\end{table}

Indeed, thanks to Proposition \ref{pNatural} the cohomology of $\cE$ is natural in degree $0$, hence  $h^{q}\big(\cE\big)=0$ when $q\ne1$. Moreover, $h^{2}\big(\cE(-h)\big)=h^1\big(\cE(-2h)\big)=0$ by definition. The vanishing $h^0\big(\cE\big)=0$ implies $h^0\big(\cE(ph)\big)=0$ for $-3\le p\le 0$. Equality \eqref{SerreDual} then returns $h^3\big(\cE(-ph)\big)=0$ in the same range. For the same reason $h^q\big(\cE(ph)\big)=h^{3-q}\big(\cE((-p-3)h)\big)$ for $p=0,-1$ and $q=1,2$. In order to compute $h^1\big(\cE\big)=h^2\big(\cE(-3h)\big)$ and $h^1\big(\cE(-h)\big)=h^2\big(\cE(-2h)\big)$ we apply Equality \eqref{RRGeneral} to $\cE$ and $\cE(-h)$ respectively. 

Let $H:=H^1\big(\cE\big)$ and $K:=H^1\big(\cE(-h)\big)$. Equality \eqref{SerreDual} implies
$$
H^2\big(\cE(-3h)\big)\cong H^1\big(\cE\big)^\vee,\qquad H^2\big(\cE(-2h)\big)\cong H^1\big(\cE(-h)\big)^\vee,
$$
hence the existence of a monad $\cC^\bullet$ of the form
$$
0\longrightarrow H^\vee\otimes\cO_{\p3}(-h)\mapright{\eta}(K\otimes\Omega_{\p3}(2h))^\vee\oplus K\otimes\Omega_{\p3}(h)\mapright{\psi}H\otimes\cO_{\p3}\longrightarrow0,
$$
follows directly from the isomorphism $\Omega^t_{\p3}(4h)\cong(\Omega^{3-t}_{\p3})^\vee$ and Table 2 above.

The monads $\cC^\bullet$ and $(\cC^\bullet)^\vee$ satisfy the hypothesis of \cite[Lemma II.4.1.3]{O--S--S}, hence the skew--symmetric isomorphism $\cE^\vee(-h)\cong\cE$ induces an isomorphism of the monads $(\cC^\bullet)^\vee(-h)\cong\cC^\bullet$ such that the map $u\colon\cC_0^\vee(-h)\to\cC_0$ is skew--symmetric. The same argument described after the proof of \cite[Corollary 2 of Lemma II.4.1.3]{O--S--S} finally yields Monad \eqref{Monad}.

Conversely, if the cohomology $\cE$ of Monad \eqref{Monad} is a vector bundle, then it is easy to check that $\rk(\cE)=2$, 
$c_1(\cE)=-1$, $ c_2(\cE)=\quantum$. We have to check that $h^0\big(\cE\big)=h^1\big(\cE(-2h)\big)=0$. To this purpose consider the two short exact sequences
\begin{equation}
\label{Display1}
\begin{gathered}
0\longrightarrow \cK\longrightarrow (K\otimes\Omega_{\p3}(2h))^\vee\oplus K\otimes\Omega_{\p3}(h)\longrightarrow H^\vee\otimes\cO_{\p3}\longrightarrow0,\\
0\longrightarrow H\otimes\cO_{\p3}(-h)\longrightarrow \cK\longrightarrow\cE\longrightarrow0.
\end{gathered}
\end{equation}

The cohomology of Sequences \eqref{Display1} easily yields
\begin{gather*}
2h^0\big(\cE\big)\le \quantum h^0\big(\Omega^\vee_{\p3}(-2h)\big)+\quantum h^0\big(\Omega_{\p3}(h)\big),\\
2h^1\big(\cE(-2h)\big)\le \quantum h^1\big(\Omega^\vee_{\p3}(-4h)\big)+\quantum h^1\big(\Omega_{\p3}(-h)\big).
\end{gather*}
The statement then follows by computing the cohomology of suitable twists of the Euler exact sequence
and its dual.
\end{proof}

\begin{remark}
As pointed out in \cite[Section 5.1]{El--Gr}, if $\cE$ is an  odd instanton bundle on $\p3$ with quantum number $\quantum$, then it is the cohomology of a monad of the form
\begin{align*}
0\longrightarrow\cO_{\p3}(-2h)^{\oplus \quantum}\oplus\cO_{\p3}(-h)^{\oplus t_0}&\longrightarrow\cO_{\p3}(-h)^{\oplus \quantum+t_0+1}\oplus\cO_{\p3}^{\oplus \quantum+t_0+1}\\
&\longrightarrow\cO_{\p3}^{\oplus t_0}\oplus\cO_{\p3}(h)^{\oplus \quantum}\longrightarrow 0
\end{align*}
where $t_0$ is the number of minimal generators of $\bigoplus_{t\in\bZ}H^1\big(\cO_{\p3}(t)\big)$ lying in $H^1\big(\cO_{\p3}\big)$.
\end{remark}

We now turn the attention to even instanton bundles on $Q$. If $\cE$ is even, then it is $\mu$--stable. Thus, we have the following well--known result.

\begin{proposition}
\label{pMonadQuadric}
Each even instanton bundle $\cE$ on $Q$ with $c_2(\cE)=\quantum \ell$ is the cohomology of a monad $\cC^\bullet$ of the form
\begin{equation}
\label{MonadQuadric}
0\longrightarrow H^\vee\otimes\cO_{Q}(-h)\mapright{u\psi^\vee} K\otimes\cO_{Q}\mapright{\psi}H\otimes\cO_{Q}(h)\longrightarrow0,
\end{equation}
where $H$ and $K$ are vector spaces with
$$
\dim(H)=\frac{\quantum}2,\qquad \dim(K)=\quantum+2
$$
and 
$$
u\colon K\otimes\cO_{Q}\to K^\vee\otimes\cO_{Q}
$$
is skew--symmetric.

Conversely, if the cohomology $\cE$ of the monad $\cC^\bullet$ is a vector bundle, then $\cE$ is an even instanton bundle with $c_2(\cE)=\quantum \ell$ on $Q$.
\end{proposition}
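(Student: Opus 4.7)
The strategy is to follow the same scheme as in the proof of Proposition \ref{pMonad}, replacing the Beilinson spectral sequence on $\p3$ with a Beilinson-type spectral sequence on $Q$ (e.g., the one in \cite{A--O1} built from the natural exceptional collection on $Q$ involving $\cO_Q, \cO_Q(h), \cO_Q(2h)$ and the spinor bundle $\Sigma$ of Remark \ref{rR=1}). The target is to realize $\cE$ as the cohomology in degree $0$ of a complex whose terms are dictated by the groups $H^i(\cE(ph))$ and $\mathrm{Ext}^i(\Sigma(ah), \cE)$, and then to argue that the spinor contributions collapse, leaving only line-bundle terms.

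First I would tabulate the relevant cohomology. Proposition \ref{pNatural} applied to the non--ordinary case $(\varepsilon,i_X)=(0,3)$ forces $h^i(\cE(ph))=0$ for $-3\le p\le 0$ except possibly when $(p,i)\in\{(0,1),(-1,1),(-2,2),(-3,2)\}$. Since $c_1(\cE)=0$ and $\rk(\cE)=2$ give $\cE\cong\cE^\vee$, Serre duality identifies $h^2(\cE(-3h))=h^1(\cE)$ and $h^2(\cE(-2h))=h^1(\cE(-h))$. A direct Riemann--Roch computation with $h^3=2$, $\omega_Q=-3h$, $h^2=2\ell$ and $h\cdot c_2(\Omega_Q)=8$ yields $\chi(\cE)=2-3\quantum/2$ and $\chi(\cE(-h))=-\quantum/2$, hence $h^1(\cE)=3\quantum/2-2$ and $h^1(\cE(-h))=\quantum/2$. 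The latter accounts for $\dim H=\quantum/2$ in the monad.

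Next I would analyze the spinor-bundle contributions using the exact sequence $0\to\Sigma\to\cO_Q^{\oplus 4}\to\Sigma^\vee\to 0$ from Remark \ref{rR=1} and the vanishings $h^i(\Sigma(th))=0$ for $i=1,2$ and all $t$. Together with the cohomology table above, one verifies that the spinor terms in the Beilinson spectral sequence reduce to trivial bundles, so that the only surviving terms are $\cO_Q(-h)$, $\cO_Q$ and $\cO_Q(h)$; the dimension $\dim K=\quantum+2$ is then forced by the rank constraint $\rk(\cE)=\dim K-2\dim H=2$. The isomorphism $\cE\cong\cE^\vee$ makes $(\cC^\bullet)^\vee$ a monad for $\cE$ too; exactly as in the proof of Proposition \ref{pMonad} one applies \cite[Lemma II.4.1.3]{O--S--S} (and the subsequent discussion) to identify this dual monad with the original one through a map $u\colon K\otimes\cO_Q\to K^\vee\otimes\cO_Q$ which may be chosen skew--symmetric, yielding the form \eqref{MonadQuadric}.

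For the converse, let $\cE$ be the cohomology of Monad \eqref{MonadQuadric} and assume $\cE$ is locally free. From the two short exact sequences
\begin{equation*}
0\longrightarrow \cK\longrightarrow K\otimes\cO_Q\longrightarrow H\otimes\cO_Q(h)\longrightarrow 0,\qquad 0\longrightarrow H^\vee\otimes\cO_Q(-h)\longrightarrow \cK\longrightarrow \cE\longrightarrow 0,
\end{equation*}
a Chern character computation gives $\rk(\cE)=\dim K-2\dim H=2$, $c_1(\cE)=0$ and $c_2(\cE)=\quantum\ell$. The vanishings $h^0(\cO_Q(-h))=h^1(\cO_Q(-h))=h^1(\cO_Q)=0$ and $h^0(\cO_Q(-2h))=h^1(\cO_Q(-2h))=h^1(\cO_Q(-3h))=h^2(\cO_Q(-h))=0$, applied to the cohomology sequences of the display twisted by $\cO_Q$ and $\cO_Q(-2h)$, yield $h^0(\cE)=h^1(\cE(-2h))=0$, and $\mu$--stability follows as in Theorem \ref{tExistence} since $c_1(\cE)=0$ and $h^0(\cE)=0$. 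The main technical obstacle is the control of the spinor-bundle terms in the Beilinson spectral sequence: one must verify that, after accounting for the sequence $0\to\Sigma\to\cO_Q^{\oplus 4}\to\Sigma^\vee\to 0$, they combine with the $E_1$ terms coming from $H^1(\cE)$ and $H^1(\cE(-h))$ to produce precisely a rank-$(\quantum+2)$ trivial sheaf in the middle of the monad.
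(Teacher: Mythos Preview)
Your overall strategy is sound and is in fact what underlies the reference the paper invokes: the paper's own proof is a two--line appeal to \cite[Proposition 1.1]{O--S} for the existence of Monad \eqref{MonadQuadric} and to the argument of Proposition \ref{pMonad} for the skew--symmetry and the converse. Your treatment of the converse (display of the monad, Chern class count, and the vanishings $h^0\big(\cE\big)=h^1\big(\cE(-2h)\big)=0$ from the cohomology of $\cO_Q$) is correct and matches what the paper intends. Likewise your cohomology table and the identification $\dim H=h^1\big(\cE(-h)\big)=\quantum/2$ are right.

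There is, however, a genuine gap in the forward direction. The point you flag as ``the main technical obstacle'' is \emph{the} content of the result you are citing away: in the Beilinson--type resolution on $Q$ (cf.\ \cite{A--O1}, \cite{C--MR}) the spinor summand is governed by the groups $H^i\big(\cE\otimes\cS(-h)\big)$, and Monad \eqref{MonadQuadric} has the displayed shape precisely when these groups all vanish (see Remark \ref{rConfront} and \cite[Theorem 3.5]{C--MR}). The spinor sequence $0\to\cS\to\cO_Q^{\oplus4}\to\cS^\vee\to0$ together with your table only gives $h^0=h^3=0$ (via $\mu$--semistability of $\cE\otimes\cS$ and Serre duality, using $\cS^\vee\cong\cS(h)$) and the tautology $\chi\big(\cE\otimes\cS(-h)\big)=0$; it does \emph{not} by itself force $h^1=h^2=0$, and the claim that the spinor contributions ``reduce to trivial bundles'' is not correct as stated --- what is needed is that they vanish outright. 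This is exactly what \cite[Proposition 1.1]{O--S} establishes for $\mu$--stable rank $2$ bundles with $c_1=0$ on $Q$, and it requires an argument beyond the table you have assembled. Once that vanishing is in hand, $\dim K$ is read off directly from the spectral sequence (not merely inferred from the rank constraint), and the skew--symmetry follows as you say via \cite[Lemma II.4.1.3]{O--S--S}.
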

\begin{proof}
For the existence of a monad see \cite[Proposition 1.1]{O--S}. We can deduce its self--duality as in the proof of Theorem \ref{pMonad}. In the same way we prove the converse part of the statement.
\end{proof}

\begin{remark}
\label{rConfront}
As pointed out in \cite[Theorem 3.5]{C--MR} a rank $2$ bundle $\cE$ on $Q$ is the cohomology of Monad \eqref{MonadQuadric} if and only if there is at most one $i\ge0$ such that $h^i\big(\cE(-th)\big)\ne0$ where $-2\le t\le0$, at most one $i\ge0$ such that $h^i\big(\cE\otimes\cS(-2h)\big)\ne0$ and $h^i\big(\cE\otimes\cS(-h)\big)=0$ for all $i\ge0$.

Thus Theorem \ref{pMonadQuadric} guarantees that all such sets of vanishings are automatically satisfied by an even instanton bundle on $Q$. Notice that the first set of vanishings above is exactly Proposition \ref{pNatural}.
\end{remark}

Thanks to  \cite[Corollary 2 of Theorem II.2.1.4]{O--S--S} and \cite[Proposition 5.3]{C--F} every instanton bundle $\cE$ on either $\p3$ or $Q$ is always generically trivial. We conclude the section by dealing with the locus of lines where the splitting of $\cE$ is not balanced.

\begin{definition}
\label{dJumping}
Let $X$ be a Fano threefold  with very ample $\cO_X(h)$.

If $\cE$ is an instanton bundle on $X$ with $c_1(\cE)=-\varepsilon h$, then a line $L\subseteq X$ is called jumping line of $\cE$ if $h^1\big(\cE((i_X+\varepsilon-2q_X^\varepsilon-1)h)\otimes\cO_L\big)\ne0$.  The locus of jumping lines of $\cE$ inside $\Lambda(X)$ will be denoted by $J_\cE$.

We say that $\cE$ has expected splitting type if the codimension of all the components of $J_\cE$ inside $\Lambda(X)$ is $1+\varepsilon$.
\end{definition}

\begin{example}
\label{eHulek}
If $i_X\ge3$ and $\cE$ is an even instanton bundle, then \cite[Corollary 2 of Theorem II.2.1.4]{O--S--S} and \cite[Proposition 5.3]{C--F} imply that $\cE$ has expected splitting type. This is no longer true if $i_X\le2$. Indeed in this case the projection $\chi\colon\mathscr L\to X$ can have disconnected fibres as pointed out in \cite[Section 3.7]{Kuz}, hence the standard approach of \cite{O--S--S,C--F} does not work.

When $X=\p3$ and  $\cE$ is odd with quantum number $2$, then $\cE$ has expected splitting type, thanks to \cite[Section 4]{H--S}. As far as we know, there are no other general results even when $i_X\ge3$: indeed one cannot say anything even about the analogous problem for bundles on $\p2$: see \cite[Example 10.7.2]{Hu}.
\end{example}

Let $\widehat{\lambda}$ and $\widehat{\chi}$ the natural projections $\Lambda(X)\times X$ on the first and second factor respectively. We will denote by $\lambda$ and $\chi$ their restrictions to the universal line $\mathscr L\subseteq\Lambda(X)\times X$. The following lemma follows immediately by the above definition.

\begin{lemma}
\label{lJumpingLocus}
Let $X$ be a Fano threefold with very ample $\cO_X(h)$.

If $\cE$ is an instanton bundle on $X$ with $c_1(\cE)=-\varepsilon h$, then the locus $J_\cE\subseteq\Lambda(X)$ is the support of the sheaf $R^1\lambda_*\chi^*\cE((i_X+\varepsilon-2q_X^\varepsilon-1)h)$.
\end{lemma}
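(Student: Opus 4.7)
The plan is to reduce the statement to a direct application of cohomology and base change. First I would observe that the universal line $\mathscr L \subseteq \Lambda(X) \times X$ gives a projection $\lambda \colon \mathscr L \to \Lambda(X)$ which is a $\mathbb P^1$--bundle: it is flat, proper, and every scheme--theoretic fibre $\lambda^{-1}([L])$ is the line $L \cong \mathbb P^1$, embedded in $X$ via $\chi$. In particular the fibres are one--dimensional, so $R^i \lambda_* \mathcal F = 0$ for all $i \ge 2$ and every coherent sheaf $\mathcal F$ on $\mathscr L$, by Grothendieck vanishing fibrewise combined with proper base change.

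Next, I would apply the cohomology and base change theorem (see e.g. \cite[Theorem III.12.11]{Ha3}) to the coherent sheaf $\mathcal F := \chi^*\cE((i_X+\varepsilon-2q_X^\varepsilon-1)h)$ on $\mathscr L$. Since $R^2\lambda_* \mathcal F = 0$ identically, the natural base change map
\[
\varphi^1_{[L]}\colon R^1\lambda_* \mathcal F \otimes \kappa([L]) \longrightarrow H^1\bigl(L,\mathcal F\otimes\cO_L\bigr)
\]
is surjective, and since the next base change map $\varphi^2_{[L]}$ is trivially surjective (both sides being zero), $\varphi^1_{[L]}$ is in fact an isomorphism at every point $[L]\in\Lambda(X)$. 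Moreover, $\chi^*\cE((i_X+\varepsilon-2q_X^\varepsilon-1)h)\otimes\cO_L \cong \cE((i_X+\varepsilon-2q_X^\varepsilon-1)h)\otimes\cO_L$ because $\chi$ identifies the fibre of $\lambda$ with $L\subseteq X$.

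Finally, by Nakayama's lemma, the stalk of the coherent sheaf $R^1\lambda_* \mathcal F$ at $[L]$ is nonzero if and only if $R^1\lambda_* \mathcal F \otimes \kappa([L]) \ne 0$, i.e.\ if and only if $h^1\bigl(\cE((i_X+\varepsilon-2q_X^\varepsilon-1)h)\otimes\cO_L\bigr) \ne 0$. By Definition \ref{dJumping}, this is precisely the condition that $[L] \in J_\cE$, so the supports coincide. There is no real obstacle here: the only point requiring any care is invoking base change at the top nonvanishing index and noting that it applies without assuming local freeness of the higher direct image.
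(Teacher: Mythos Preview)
Your argument is correct and follows essentially the same approach as the paper: both identify $J_\cE$ with the support of $R^1\lambda_*\chi^*\cE((i_X+\varepsilon-2q_X^\varepsilon-1)h)$ via cohomology and base change, the paper citing \cite[Corollary III.12.9]{Ha2} directly while you spell out the top--degree base change mechanism in detail. Note only that your citation should point to \cite{Ha2} (Hartshorne's \emph{Algebraic Geometry}) rather than \cite{Ha3}.
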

\begin{proof}
The open locus $\Lambda(X)\setminus J_\cE$ is by definition \ref{dJumping} the set of $L\in \Lambda(X)$ such that $h^1\big(\cE((i_X+\varepsilon-2q_X^\varepsilon-1)h)\otimes\cO_L\big)=0$. Thanks to \cite[Corollary III.12.9]{Ha2}, it is thus the locus where $R^1\lambda_*\chi^*\cE((i_X+\varepsilon-2q_X^\varepsilon-1)h))=0$. 
\end{proof}

We first deal with instanton bundles on $\p3$. Recall that $\Lambda(\p3)=G(2,4)$ is the Grassmannian of lines in $\p3$ which is a quadric in $\p5$ endowed with the induced natural polarization $\cO_{\Lambda(\p3)}(1)$. In this case $J_\cE$ is the support of the sheaf $R^1\lambda_*\chi^*\cE$. 

The quadric $\Lambda(\p3)$ contains two families of planes which are classically known as $\alpha$--planes (the lines through a fixed point) and $\beta$--planes (the lines contained in a fixed plane). Both these families of planes correspond to the non--zero sections of the two rank $2$ spinor bundles on $\Lambda(\p3)$ (see \cite{Ott2}).

As pointed out in  \cite[Section 1.5]{G--H}, $H^*(\Lambda(\p3))$ is generated by the Schubert cycles. More precisely, $H^2(\Lambda(\p3))$ is generated by $\sigma_1$ (representing the set of lines in $\Lambda(\p3)$ intersecting a fixed one),  $A^2(\Lambda(\p3))$  by $\sigma_2$ and $\sigma_{1,1}$ (representing respectively the classes of the $\alpha$--planes and $\beta$--planes in $\Lambda(\p3)$). Moreover, the relation $\sigma_1^2=\sigma_2+\sigma_{1,1}$ holds inside $\Lambda(\p3)$.

Let $\cS$ be the spinor bundle corresponding to the $\alpha$--planes. 

\begin{proposition}
Let $\cE$ be an odd instanton bundle on $\p3$ with quantum number $\quantum$. 

If $\cE$ has expected splitting type, then $J_{\cE}\subseteq\Lambda(\p3)$ is endowed with a sheaf $\mathscr G$ fitting into
\begin{equation}
\label{seqJumping}
\begin{aligned}
0\longrightarrow \cO_{\Lambda(\p3)}(-\quantum)&\longrightarrow (K\otimes\cO_{\Lambda(\p3)}(1))^\vee\oplus (K\otimes\cS)\\
&\mapright\Psi H\otimes\cO_{\Lambda(\p3)}\longrightarrow \mathscr G\longrightarrow 0.
\end{aligned}
\end{equation}
Moreover, the class of $J_\cE$ in $H^*(\Lambda(\p3))$ is
$$
\frac{k^2}{2}\sigma_2+\frac{k(k-1)}{2}\sigma_{1,1}.
$$
\end{proposition}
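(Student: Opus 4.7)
The plan is to apply the relative functor $\Phi := R\lambda_*\chi^*$ to Monad~\eqref{Monad} in order to build an explicit four-term locally free resolution of $\mathscr{G} := R^1\lambda_*\chi^*\cE$; by Lemma~\ref{lJumpingLocus} this sheaf is supported on $J_\cE$ (note that $i_{\p3} + \varepsilon - 2q_{\p3}^\varepsilon - 1 = 0$ for odd instantons on $\p3$), and its Chern character will produce $[J_\cE]$. Since the fibers of $\lambda$ are lines $\p1$, the key pushforwards are the following: $\Phi(\cO_{\p3}(-h)) = 0$ because $H^\ast(\cO_{\p1}(-1)) = 0$; $\Phi(\cO_{\p3}) = \cO_{\Lambda(\p3)}$ in degree $0$; one identifies $\Phi(\Omega_{\p3}(h)) \cong \cS$ in degree $0$ by pushing forward the twisted Euler sequence $0 \to \Omega_{\p3}(h) \to \cO_{\p3}^{\oplus 4} \to \cO_{\p3}(h) \to 0$ and matching with the dualized tautological sequence $0 \to \cQ^\vee \to \cO_{\Lambda(\p3)}^{\oplus 4} \to \cU^\vee \to 0$ on $\Lambda(\p3) = G(2,4)$, whence $\cS = \cQ^\vee$; and one has $\Phi((\Omega_{\p3}(2h))^\vee) = \Phi(T_{\p3}(-2h)) \cong \cO_{\Lambda(\p3)}(-1)$ in degree $0$, obtained by pushing forward $0 \to \cO_{\p3}(-2h) \to \cO_{\p3}(-h)^{\oplus 4} \to T_{\p3}(-2h) \to 0$ to identify $\Phi(T_{\p3}(-2h))$ with $R^1\lambda_*\chi^*\cO_{\p3}(-2h)$, then invoking relative Serre duality on the $\p1$-fibration $\mathscr{L} = \bP_{\Lambda(\p3)}(\cU^\vee) \to \Lambda(\p3)$, whose relative dualizing sheaf is $\omega_{\mathscr{L}/\Lambda(\p3)} \cong \chi^*\cO_{\p3}(-2h) \otimes \lambda^*\cO_{\Lambda(\p3)}(1)$.

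Breaking Monad~\eqref{Monad} into the two short exact sequences of its display and applying $\Phi$, the vanishing of $\Phi((H \otimes \cO_{\p3}(h))^\vee)$ collapses the long exact sequence of direct images into the single four-term complex
\[ 0 \to \lambda_*\chi^*\cE \to K^\vee \otimes \cO_{\Lambda(\p3)}(-1) \oplus K \otimes \cS \xrightarrow{\Psi} H \otimes \cO_{\Lambda(\p3)} \to \mathscr{G} \to 0. \]
The expected-splitting-type hypothesis forces $\codim J_\cE = 2$, so $\mathscr{G} = \coker \Psi$ is torsion of codimension $2$. Therefore $\lambda_*\chi^*\cE$ is the kernel of a morphism of locally free sheaves whose image is torsion-free, and hence it is reflexive of rank $1$ on the smooth fourfold $\Lambda(\p3)$, hence a line bundle. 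Its first Chern class equals $c_1(K^\vee \otimes \cO_{\Lambda(\p3)}(-1)) + c_1(K \otimes \cS) = -\tfrac{k}{2}\sigma_1 + \tfrac{k}{2}(-\sigma_1) = -k\sigma_1$; since $\Pic(\Lambda(\p3)) = \bZ$ is generated by $\cO_{\Lambda(\p3)}(1)$, this forces $\lambda_*\chi^*\cE \cong \cO_{\Lambda(\p3)}(-k)$, producing Sequence~\eqref{seqJumping}.

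Finally, because $\mathscr{G}$ is torsion of pure codimension $2$, its Chern character satisfies $\mathrm{ch}_i(\mathscr{G}) = 0$ for $i < 2$ and $\mathrm{ch}_2(\mathscr{G}) = [J_\cE]$ in $H^4(\Lambda(\p3))$. Using the resolution above together with the values $c_1(\cS) = -\sigma_1$ and $c_2(\cS) = \sigma_2$, a direct calculation yields
\[ \mathrm{ch}_2(\mathscr{G}) = \tfrac{k^2}{2}\sigma_1^2 - \tfrac{k}{2}\sigma_{1,1}, \]
and invoking $\sigma_1^2 = \sigma_2 + \sigma_{1,1}$ on $G(2,4)$ produces the claimed class $\tfrac{k^2}{2}\sigma_2 + \tfrac{k(k-1)}{2}\sigma_{1,1}$. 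The main subtleties will be the correct identification of the spinor bundle $\cS$ between the two candidates $\cU^\vee$ and $\cQ^\vee$—an error here swaps $\sigma_2$ and $\sigma_{1,1}$ in the final formula—and the computation of $\omega_{\mathscr{L}/\Lambda(\p3)}$ needed for the relative Serre duality step.
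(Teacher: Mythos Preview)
Your proof is correct and follows the same overall architecture as the paper: split the monad, push forward along $\lambda$, obtain the four-term resolution, identify $\lambda_*\chi^*\cE$ as a line bundle via its first Chern class, and read off the class of $J_\cE$ from the resolution. The only substantive difference is in how the key pushforwards are computed. The paper tensors the Koszul resolution of $\cO_{\mathscr L}$ on $\Lambda(\p3)\times\p3$ (Sequence~\eqref{seqUniversal}) by the relevant sheaves and applies $\widehat\lambda_*$, whereas you push forward the (dual) Euler sequence directly and invoke relative Serre duality for the $\p1$-bundle $\mathscr L\to\Lambda(\p3)$. Both routes yield the same identifications $R^0\lambda_*\chi^*\Omega_{\p3}(h)\cong\cS$ and $R^0\lambda_*\chi^*\Omega_{\p3}^\vee(-2h)\cong\cO_{\Lambda(\p3)}(-1)$; the paper's route is more self-contained (no need to identify $\omega_{\mathscr L/\Lambda(\p3)}$), while yours makes the identification $\cS\cong\cQ^\vee$ explicit and thereby justifies $c_2(\cS)=\sigma_2$ rather than $\sigma_{1,1}$, which the paper simply asserts. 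For the final class the paper computes $-c_2(\mathscr G)$ and you compute $\mathrm{ch}_2(\mathscr G)$; since $\mathscr G$ has rank zero and $c_1(\mathscr G)=0$ these agree. Your observation that $\lambda_*\chi^*\cE$ is reflexive of rank one, hence invertible, is slightly more explicit than the paper's argument.
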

\begin{proof}
Splitting Monad \eqref{Monad} we obtain the exact sequences
\begin{equation}
\label{seqDisplay}
\begin{gathered}
{\begin{aligned}
0\longrightarrow \chi^*\cK\longrightarrow (K\otimes\chi^*\Omega_{\p3}(2h))^\vee&\oplus (K\otimes\chi^*\Omega_{\p3}(h))\\
&\longrightarrow H\otimes\chi^*\cO_{\p3}\longrightarrow 0,
\end{aligned}}\\
0\longrightarrow H^\vee\otimes\chi^*\cO_{\p3}(-h)\longrightarrow\chi^*\cK\longrightarrow \chi^*\cE\longrightarrow 0.
\end{gathered}
\end{equation}
As in the previous proof we apply the functor $\lambda_*$ to the two above sequences. In particular we have to identify for all $i\ge0$ the sheaves $R^i\lambda_*\chi^*\Omega^\vee_{\p3}(-2h)$,  $R^i\lambda_*\chi^*\Omega_{\p3}(h)$, $R^i\lambda_*\chi^*\cO_{\p3}(-th)$ where $t=0,1$.

The structure sheaf of the universal line fits into an exact sequence of the form
\begin{equation}
\begin{aligned}
\label{seqUniversal}
0\longrightarrow \cO_{\Lambda(\p3)}(-1)\boxtimes\cO_{\p3}(-2h)&\longrightarrow   \cS\boxtimes\cO_{\p3}(-h) \\
&\longrightarrow \cO_{\Lambda(\p3)\times {\p3}}\longrightarrow \cO_{\mathscr L}\longrightarrow0.
\end{aligned}
\end{equation}
We will compute $R^i\lambda_*\chi^*\Omega^\vee_{\p3}(-2h)$,  $R^i\lambda_*\chi^*\Omega_{\p3}(h)$, $R^i\lambda_*\chi^*\cO_{\p3}(-th)$ where $t=0,1$ by applying the functor $\widehat{\lambda}_*$ to Sequence \eqref{seqUniversal} tensored by $\widehat{\chi}^*\Omega^\vee_{\p3}(-2h)$,  $\widehat{\chi}^*\Omega_{\p3}(h)$, $\widehat{\chi}^*\cO_{\p3}(-th)$ where $t=0,1$, making use of the projection formula \cite[Exercise III.8.3]{Ha2} and taking into account that $\widehat{\lambda}_*\widehat\cF\cong{\lambda}_*\cF$ for each sheaf supported on $\mathscr L$. 

Let $\cG$ be a sheaf on $\p3$. For each $L\in\Lambda(\p3)$ we denote by $k(L)$ the residue field of $\cO_{\Lambda(\p3)}$ at $L$. Trivially $h^i\big(\widehat{\chi}^*\cG\otimes k(L)\big)=h^i\big(\cG\big)$, hence the function $L\mapsto h^i\big(\widehat{\chi}^*\cG\otimes k(L)\big)$ is constant on $\Lambda(\p3)$. Thus \cite[Corollary III.12.9]{Ha2} implies that
$$
R^i\widehat{\lambda}_*(\cO_{\Lambda(\p3)}(-s)\boxtimes\cG)\cong \cO_{\Lambda(\p3)}(-s)\otimes R^i\widehat{\lambda}_*\widehat{\chi}^*\cG\cong\cO_{\Lambda(\p3)}(-s)\otimes H^i\big(\cG\big)
$$
In this way we deduce that 
$$
R^0{\lambda}_*{\chi}^*\cO_{\p3}\cong\cO_{\Lambda(\p3)},\qquad R^0{\lambda}_*{\chi}^*\Omega_{\p3}^\vee(-2h)\cong\cO_{\Lambda(\p3)}(-1),\qquad R^0{\lambda}_*{\chi}^*\Omega_{\p3}(h)\cong\cS,
$$
all the other $R^i$'s being zero. Replacing the above isomorphisms in the first of Sequence \eqref{seqDisplay} and taking into account that the second of such sequences gives $R^i\lambda_*\chi^*\cK\cong R^i\lambda_*\chi^*\cE$ we finally obtain the exact sequence
\begin{equation}
\label{seqJumpingFirst}
\begin{aligned}
0\longrightarrow \lambda_*\chi^*\cE&\longrightarrow (K\otimes\cO_{\Lambda(\p3)}(1))^\vee\oplus (K\otimes\cS)\\
&\mapright\Psi H\otimes\cO_{\Lambda(\p3)}\longrightarrow R^1\lambda_*\chi^*\cE\longrightarrow0,
\end{aligned}
\end{equation}
where the map $\Psi$ is induced by $\psi^\vee(-h)$. In particular $R^1\lambda_*\chi^*\cE$ has rank $1$. Since $\cE$ has expected splitting type, then  all the components of $J_\cE$ have codimension $2$. In this case $c_1(R^1\lambda_*\chi^*\cE)=0$, hence computing it from Sequence \eqref{seqJumpingFirst} we obtain $\lambda_*\chi^*\cE\cong \cO_{\Lambda(\p3)}(-\quantum)$. Taking into account such a latter isomorphism, Sequence \eqref{seqJumping} is exactly Sequence \eqref{seqJumpingFirst}.

In order to complete the proof, notice that the class of $J_\cE$ inside $H^*(\Lambda(\p3))$ is 
\begin{align*}
-c_2(\cG)&=c_2\left((K\otimes\cO_{\Lambda(\p3)}(1))^\vee\oplus (K\otimes\cS)\right)\\
&=c_2\left((K\otimes\cO_{\Lambda(\p3)}(1))^\vee\right)+c_2\left(K\otimes\cS\right)+c_1\left((K\otimes\cO_{\Lambda(\p3)}(1))^\vee\right)c_1\left(K\otimes\cS\right).
\end{align*}
By direct computation one checks that
\begin{gather*}
c_1\left((K\otimes\cO_{\Lambda(\p3)}(1))^\vee\right)=c_1\left(K\otimes\cS\right)=-\frac{k}{2}\sigma_1, \\
c_2\left((K\otimes\cO_{\Lambda(\p3)}(1))^\vee\right)=\frac{k(k-2)}{8} \sigma_1^2,\qquad 
c_2\left(K\otimes\cS\right)=\frac{k(k-2)}{8} \sigma_1^2 + \frac{k}{2}\sigma_2.
\end{gather*}
The statement then follows by combining the above equalities
\end{proof}

Now we turn our attention to the quadric $Q\subseteq\p4$. Recall that $\Lambda(Q)\cong\p3$. In this case $J_\cE$ is the support of the sheaf $R^1\lambda_*\chi^*\cE(-h)$ and it is a divisor (see Example \ref{eHulek}).

\begin{proposition}
Let $\cE$ be an even instanton bundle on $Q$ with quantum number $\quantum$. 

Then $J_{\cE}\subseteq\Lambda(Q)$ is a divisor of degree $\quantum$ endowed with a sheaf $\mathscr G$ fitting into
\begin{equation}
\label{seqJumpingQuadric}
0\longrightarrow H^\vee\otimes\cO_{\Lambda(Q)}(-2)\mapright{\Psi} H\otimes\cO_{\Lambda(Q)} \longrightarrow \mathscr G\longrightarrow 0.
\end{equation}
Moreover, $J_\cE$ is a hypersurface of degree $k$.
\end{proposition}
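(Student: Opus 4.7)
The plan is to mirror the proof of the analogous statement on $\p3$: split Monad \eqref{MonadQuadric} into the two short exact sequences
\[
0 \longrightarrow \cK \longrightarrow K \otimes \cO_Q \mapright{\psi} H \otimes \cO_Q(h) \longrightarrow 0, \qquad 0 \longrightarrow H^\vee \otimes \cO_Q(-h) \longrightarrow \cK \longrightarrow \cE \longrightarrow 0,
\]
tensor each with $\cO_Q(-h)$, pull back to $\mathscr L$ via $\chi$, and push forward along $\lambda$. The output will be Sequence \eqref{seqJumpingQuadric} with $\mathscr G \cong R^1\lambda_*\chi^*\cE(-h)$, whose support is $J_\cE$ by Lemma \ref{lJumpingLocus}.

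The key input is the computation of $R^\bullet\lambda_*\chi^*\cO_Q(th)$ for $t \in \{-2, -1, 0\}$. Recalling that lines on $Q$ are precisely the zero schemes of sections of $\cS^\vee$ (so that $\Lambda(Q) \cong \mathbb{P}(H^0(\cS^\vee)) \cong \p3$), the universal section of $\cO_{\Lambda(Q)}(1) \boxtimes \cS^\vee$ on $\Lambda(Q) \times Q$ yields the Koszul resolution analogous to \eqref{seqUniversal},
\[
0 \longrightarrow \cO_{\Lambda(Q)}(-2) \boxtimes \cO_Q(-h) \longrightarrow \cO_{\Lambda(Q)}(-1) \boxtimes \cS \longrightarrow \cO_{\Lambda(Q) \times Q} \longrightarrow \cO_{\mathscr L} \longrightarrow 0.
\]
Tensoring with $\widehat{\chi}^*\cO_Q(th)$, pushing forward by $\widehat{\lambda}_*$, and using the projection formula, the only nonzero cohomology groups on $Q$ that enter are $h^0(\cO_Q) = 1$ and $h^3(\cO_Q(-3h)) = 1$ (the spinor bundle is aCM with $h^0$ and $h^3$ vanishing in the range used, and $h^i(\cO_Q(-h)) = h^i(\cO_Q(-2h)) = 0$ for every $i$). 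The hypercohomology spectral sequence then collapses to give
\[
\lambda_*\chi^*\cO_Q \cong \cO_{\Lambda(Q)}, \qquad R^\bullet\lambda_*\chi^*\cO_Q(-h) = 0, \qquad R^1\lambda_*\chi^*\cO_Q(-2h) \cong \cO_{\Lambda(Q)}(-2),
\]
with all other higher direct images vanishing.

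Applying $R\lambda_*\chi^*$ to the twisted split sequences, the long exact sequence of the first splitting gives $\lambda_*\chi^*\cK(-h) = 0$ and $R^1\lambda_*\chi^*\cK(-h) \cong H \otimes \cO_{\Lambda(Q)}$, and then the long exact sequence of the second splitting yields
\[
0 \to \lambda_*\chi^*\cE(-h) \to H^\vee \otimes \cO_{\Lambda(Q)}(-2) \mapright{\Psi} H \otimes \cO_{\Lambda(Q)} \to R^1\lambda_*\chi^*\cE(-h) \to 0.
\]
Generic triviality of $\cE$ on lines (Example \ref{eHulek}) forces $\cE(-h)|_L \cong \cO_{\p1}(-1)^{\oplus 2}$ for the general $L$, so $\lambda_*\chi^*\cE(-h)$ is a torsion-free subsheaf of a locally free sheaf with zero generic rank, hence vanishes. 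Thus $\Psi$ is injective and we obtain Sequence \eqref{seqJumpingQuadric} with $\mathscr G := R^1\lambda_*\chi^*\cE(-h)$. Since $\Psi$ is an injective morphism between locally free sheaves of equal rank $\dim H = \quantum/2$, the support of $\mathscr G$ coincides with the zero locus of $\det\Psi \in H^0(\cO_{\Lambda(Q)}(\quantum))$, so $J_\cE$ is a hypersurface of degree $\quantum$.

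The main obstacle is the clean set-up of the Koszul resolution of $\cO_{\mathscr L}$ on $\Lambda(Q) \times Q$, in particular keeping track of the correct bidegrees once the paper's spinor bundle convention is accounted for, and the careful bookkeeping of the hypercohomology spectral sequence so that the boundary map $\Psi$ appears with the precise form stated in \eqref{seqJumpingQuadric}.
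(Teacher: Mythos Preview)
Your proof is correct and follows the same approach as the paper: split the monad twisted by $\cO_Q(-h)$, use the Koszul resolution of $\cO_{\mathscr L}$ coming from the universal section of the spinor bundle to compute the relevant $R^i\lambda_*\chi^*\cO_Q(th)$, and read off Sequence \eqref{seqJumpingQuadric} from the resulting long exact sequences. Your explicit justification that $\lambda_*\chi^*\cE(-h)=0$ via generic triviality is a detail the paper leaves implicit, but otherwise the arguments coincide.
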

\begin{proof}
Splitting Monad \eqref{MonadQuadric} we obtain the exact sequences
\begin{equation}
\label{seqDisplayQuadric}
\begin{gathered}
0\longrightarrow \chi^*\cK(-h)\longrightarrow K\otimes\chi^*\cO_Q(-h)\longrightarrow H\otimes\chi^*\cO_Q\longrightarrow 0,\\
0\longrightarrow H^\vee\otimes\chi^*\cO_Q(-2h)\longrightarrow\chi^*\cK(-h)\longrightarrow \chi^*\cE(-h)\longrightarrow 0.
\end{gathered}
\end{equation}
As in the previous proof we apply the functor $\lambda_*$ to the two above sequences. In particular we have to identify the sheaves $R^i\lambda_*\chi^*\cO_Q(-th)$ for $t=0,1,2$ and $i\ge0$.

Recall that $\Lambda(Q)\cong\p3$ is endowed with the standard polarization $\cO_{\p3}(1)$ and $ \cO_{\mathscr L}$ fits into the exact sequence
\begin{equation}
\begin{aligned}
\label{seqUniversalQuadric}
0\longrightarrow \cO_{\Lambda(Q)}(-2)\boxtimes\cO_Q(-h)&\longrightarrow  \cO_{\Lambda(Q)}(-1)\boxtimes \cS\\
&\longrightarrow \cO_{\Lambda(Q)\times {Q}}\longrightarrow \cO_{\mathscr L}\longrightarrow0
\end{aligned}
\end{equation}
where $\cS$ is the unique spinor bundle on $Q$, whose non--zero sections vanish exactly on the lines inside $Q$. We then compute $R^i\lambda_*\chi^*\cO_Q(-th)$ by applying the functor $\widehat{\lambda}_*$ to Sequence \eqref{seqUniversalQuadric} tensored by $\widehat{\chi}^*\cO_Q(-th)$. We then deduce 
$$
R^1{\lambda}_*{\chi}^*\cO_Q(-2h)\cong\cO_{\Lambda(Q)}(-2),\qquad R^0{\lambda}_*{\chi}^*\cO_Q\cong\cO_{\Lambda(Q)},
$$
all the other $R^i$'s being zero. Replacing the above isomorphisms in Sequences \eqref{seqDisplayQuadric} we finally obtain Sequence \eqref{seqJumpingQuadric} where $\mathscr G\cong R^1\lambda_*\chi^*\cE(-h)$ and $\Psi$ is induced by the map $\psi\colon H^\vee\otimes\chi^*\cO_Q(-2h)\to K\otimes\chi^*\cO_Q(-h)$. 

Finally observe that $\cG$ is supported on the vanishing locus of $\det(\Psi)$. The statement then follows by noticing that $\Psi$ is represented by a ${k}/{2}$ square matrix of degree two homogenous forms (see Sequence \eqref{seqJumpingQuadric}).
\end{proof}

\bigskip
\noindent
Vincenzo Antonelli,\\
Dipartimento di Scienze Matematiche, Politecnico di Torino,\\
c.so Duca degli Abruzzi 24,\\
10129 Torino, Italy\\
e-mail: {\tt vincenzo.antonelli@polito.it}

\bigskip
\noindent
Gianfranco Casnati,\\
Dipartimento di Scienze Matematiche, Politecnico di Torino,\\
c.so Duca degli Abruzzi 24,\\
10129 Torino, Italy\\
e-mail: {\tt gianfranco.casnati@polito.it}

\bigskip
\noindent
Ozhan Genc,\\
Faculty of Mathematics and Computer Science, Jagiellonian University,\\
ul. {\L}ojasiewicza 6,\\
30-348 Krak{\'o}w, Poland\\
e-mail: {\tt ozhangenc@gmail.com}

\end{document}